\documentclass[english]{article}
\usepackage[letterpaper, margin=1.2in]{geometry}
\usepackage{babel}
\usepackage{verbatim}
\usepackage{mathtools}
\usepackage{booktabs}
\usepackage{enumitem}
\usepackage{bm}
\usepackage{algorithm, algpseudocode, multirow}
\providecommand{\algorithmname}{Algorithm}
\floatname{algorithm}{\protect\algorithmname}
\floatstyle{ruled}
\newfloat{function}{tbp}{lofn}
\floatname{function}{Function}
\usepackage{amsmath}
\numberwithin{equation}{section}
\usepackage{amsthm}
\usepackage{amssymb}
\usepackage{minitoc}
\usepackage{color,xcolor}
\usepackage{wrapfig}
\usepackage{pifont}
\usepackage[numbers]{natbib}
\usepackage[pdfusetitle, colorlinks=false]{hyperref} 
\usepackage[all]{hypcap}
\usepackage{cleveref}
\crefname{ALG@line}{line}{lines}
\Crefname{ALG@line}{Line}{Lines}
\makeatletter
\@ifpackageloaded{hyperref}{
    \DeclareRobustCommand{\theHALG@line}{line.\thealgorithm.\thefunction.\arabic{ALG@line}}
}{
}
\makeatother

\makeatletter
\theoremstyle{definition}
\newtheorem{defn}{\protect\definitionname}[]
\theoremstyle{plain}
\newtheorem{thm}{\protect\theoremname}[section]
\theoremstyle{plain}
\newtheorem{proposition}{\protect\propositionname}[section]
\theoremstyle{plain}
\newtheorem{assumption}{\protect\assumptionname}
\theoremstyle{plain}
\newtheorem{lem}{\protect\lemmaname}[section]
\theoremstyle{remark}

\theoremstyle{definition}

\providecommand{\examplename}{Example}
\theoremstyle{corollary}
\newtheorem{corollary}{\protect\corollaryname}[section]

\providecommand{\assumptionname}{Assumption}
\providecommand{\definitionname}{Definition}
\providecommand{\lemmaname}{Lemma}
\providecommand{\propositionname}{Proposition}
\providecommand{\remarkname}{Remark}
\providecommand{\theoremname}{Theorem}
\providecommand{\corollaryname}{Corollary}

\crefdefaultlabelformat{#2\textbf{#1}#3} %
\crefname{section}{\textbf{section}}{\textbf{sections}}
\Crefname{section}{\textbf{Section}}{\textbf{Sections}}
\crefname{thm}{\textbf{Theorem}}{\textbf{theorems}}
\Crefname{thm}{\textbf{Theorem}}{\textbf{Theorems}}
\crefname{lem}{\textbf{Lemma}}{\textbf{lemmas}}
\Crefname{lem}{\textbf{Lemma}}{\textbf{Lemmas}}
\crefname{prop}{\textbf{proposition}}{\textbf{propositions}}
\Crefname{prop}{\textbf{Proposition}}{\textbf{Propositions}}
\crefname{algorithm}{\textbf{algorithm}}{\textbf{algorithms}}
\Crefname{algorithm}{\textbf{Algorithm}}{\textbf{Algorithms}}
\crefname{function}{\textbf{function}}{\textbf{functions}}
\Crefname{function}{\textbf{Function}}{\textbf{Functions}}
\crefname{coro}{\textbf{Corollary}}{\textbf{corollaries}}
\Crefname{coro}{\textbf{Corollary}}{\textbf{corollaries}}
\crefname{defn}{\textbf{Definition}}{\textbf{definitions}}
\Crefname{defn}{\textbf{Definition}}{\textbf{definitions}}
\crefname{table}{\textbf{Table}}{\textbf{tables}}
\Crefname{table}{\textbf{Table}}{\textbf{tables}}
\crefname{figure}{\textbf{Figure}}{\textbf{figures}}
\Crefname{figure}{\textbf{Figure}}{\textbf{figures}}
\crefname{exple}{\textbf{Example}}{\textbf{examples}}
\Crefname{exple}{\textbf{Example}}{\textbf{examples}}
\Crefname{assumption}{\textbf{Assumption}}{\textbf{Assumptions}}
\crefname{assumption}{\textbf{Assumption}}{\textbf{Assumptions}}
\Crefname{rem}{\textbf{Remark}}{\textbf{Remarks}}
\crefname{rem}{\textbf{Remark}}{\textbf{Remarks}}

\usepackage[most]{tcolorbox}
\usepackage{fancybox,ulem,subfig}
\tcbuselibrary{breakable,theorems,skins}

\newtcbtheorem[number within=section]{exbox}{Example}{
  enhanced,
  colback=gray!3,           
  colframe=gray!70!black,   
  boxrule=0.6pt,
  arc=2mm,                  
  left=8pt,right=8pt,top=8pt,bottom=8pt,
  fonttitle=\bfseries,       
  attach boxed title to top left={yshift=-1mm, xshift=2mm},
}{ex}

\providecommand{\corollaryname}{Corollary}

\newcommand{\isarxiv}{}

\makeatother

\begin{document}
\global\long\def\inprod#1#2{\left\langle #1,#2\right\rangle }%

\global\long\def\inner#1#2{\left\langle #1,#2\right\rangle }%

\global\long\def\binner#1#2{\big\langle#1,#2\big\rangle}%

\global\long\def\norm#1{\left\Vert #1\right\Vert }%

\global\long\def\bnorm#1{\big\Vert#1\big\Vert}%

\global\long\def\Bnorm#1{\Big\Vert#1\Big\Vert}%

\global\long\def\red#1{\textcolor{red}{#1}}%

\global\long\def\blue#1{\textcolor{blue}{#1}}%

\global\long\def\brbra#1{\left(#1\right)}%

\global\long\def\Brbra#1{\left(#1\right)}%

\global\long\def\rbra#1{(#1)}%

\global\long\def\lrbra#1{\left(#1\right)}%

\global\long\def\sbra#1{[#1]}%

\global\long\def\bsbra#1{\left[#1\right]}%

\global\long\def\Bsbra#1{\Big[#1\Big]}%

\global\long\def\abs#1{\vert#1\vert}%

\global\long\def\babs#1{\big\vert#1\big\vert}%

\global\long\def\lrabs#1{\left|#1\right|}%

\global\long\def\cbra#1{\{#1\}}%

\global\long\def\bcbra#1{\left\{  #1\right\}  }%

\global\long\def\Bcbra#1{\left\{  #1\right\}  }%

\global\long\def\matr#1{\bm{#1}}%

\global\long\def\til#1{\tilde{#1}}%

\global\long\def\wtil#1{\widetilde{#1}}%

\global\long\def\wh#1{\widehat{#1}}%

\global\long\def\mcal#1{\mathcal{#1}}%

\global\long\def\mbb#1{\mathbb{#1}}%

\global\long\def\mtt#1{\mathtt{#1}}%

\global\long\def\ttt#1{\texttt{#1}}%

\global\long\def\dtxt{\textrm{d}}%

\global\long\def\aeq{\overset{(a)}{=}}%

\global\long\def\bignorm#1{\bigl\Vert#1\bigr\Vert}%

\global\long\def\Bignorm#1{\Bigl\Vert#1\Bigr\Vert}%

\global\long\def\rmn#1#2{\mathbb{R}^{#1\times#2}}%

\global\long\def\deri#1#2{\frac{d#1}{d#2}}%

\global\long\def\pderi#1#2{\frac{\partial#1}{\partial#2}}%

\global\long\def\limk{\lim_{k\rightarrow\infty}}%

\global\long\def\trans{\textrm{T}}%

\global\long\def\onebf{\mathbf{1}}%

\global\long\def\zerobf{\mathbf{0}}%

\global\long\def\zero{\bm{0}}%


\global\long\def\Euc{\mathrm{E}}%

\global\long\def\Expe{\mathbb{E}}%

\global\long\def\rank{\mathrm{rank}}%

\global\long\def\range{\mathrm{range}}%

\global\long\def\diam{\mathrm{diam}}%

\global\long\def\epi{\mathrm{epi} }%

\global\long\def\inte{\operatornamewithlimits{int}}%

\global\long\def\dist{\operatornamewithlimits{dist}}%

\global\long\def\proj{\operatornamewithlimits{Proj}}%

\global\long\def\cov{\mathrm{Cov}}%

\global\long\def\argmin{\operatornamewithlimits{argmin}}%

\global\long\def\argmax{\operatornamewithlimits{argmax}}%

\global\long\def\where{\operatornamewithlimits{where}}%

\global\long\def\conv{\operatornamewithlimits{conv}}%

\global\long\def\tr{\operatornamewithlimits{tr}}%

\global\long\def\dist{\operatorname{dist}}%

\global\long\def\sign{\operatornamewithlimits{sign}}%

\global\long\def\prob{\mathrm{Prob}}%

\global\long\def\st{\operatornamewithlimits{s.t.}}%

\global\long\def\dom{\mathrm{dom}}%

\global\long\def\prox{\mathrm{prox}}%

\global\long\def\diag{\mathrm{diag}}%

\global\long\def\and{\mathrm{and}}%

\global\long\def\as{\textup{a.s.}}%

\global\long\def\ae{\textup{a.e.}}%

\global\long\def\Var{\operatornamewithlimits{Var}}%

\global\long\def\Cov{\operatornamewithlimits{Cov}}%

\global\long\def\raw{\rightarrow}%

\global\long\def\law{\leftarrow}%

\global\long\def\Raw{\Rightarrow}%

\global\long\def\Law{\Leftarrow}%

\global\long\def\vep{\varepsilon}%

\global\long\def\dom{\operatornamewithlimits{dom}}%

\global\long\def\err{\operatorname{err}}%

\global\long\def\soc{\operatorname{soc}}%

\global\long\def\rsoc{\operatorname{rsoc}}%

\global\long\def\tsum{{\textstyle {\sum}}}%

\global\long\def\Cbb{\mathbb{C}}%

\global\long\def\Ebb{\mathbb{E}}%

\global\long\def\Fbb{\mathbb{F}}%

\global\long\def\Nbb{\mathbb{N}}%

\global\long\def\Rbb{\mathbb{R}}%

\global\long\def\extR{\widebar{\mathbb{R}}}%

\global\long\def\Pbb{\mathbb{P}}%

\global\long\def\Mrm{\mathrm{M}}%

\global\long\def\Acal{\mathcal{A}}%

\global\long\def\Bcal{\mathcal{B}}%

\global\long\def\Ccal{\mathcal{C}}%

\global\long\def\Dcal{\mathcal{D}}%

\global\long\def\Ecal{\mathcal{E}}%

\global\long\def\Fcal{\mathcal{F}}%

\global\long\def\Gcal{\mathcal{G}}%

\global\long\def\Hcal{\mathcal{H}}%

\global\long\def\Ical{\mathcal{I}}%

\global\long\def\Kcal{\mathcal{K}}%

\global\long\def\Lcal{\mathcal{L}}%

\global\long\def\Mcal{\mathcal{M}}%

\global\long\def\Ncal{\mathcal{N}}%

\global\long\def\Ocal{\mathcal{O}}%

\global\long\def\Pcal{\mathcal{P}}%

\global\long\def\Scal{\mathcal{S}}%

\global\long\def\Tcal{\mathcal{T}}%

\global\long\def\Xcal{\mathcal{X}}%

\global\long\def\Ycal{\mathcal{Y}}%

\global\long\def\Zcal{\mathcal{Z}}%

\global\long\def\i{i}%

\global\long\def\abf{\mathbf{a}}%

\global\long\def\bbf{\mathbf{b}}%

\global\long\def\cbf{\mathbf{c}}%

\global\long\def\ebf{\mathbf{e}}%

\global\long\def\fbf{\mathbf{f}}%

\global\long\def\hbf{\mathbf{h}}%

\global\long\def\qbf{\mathbf{q}}%

\global\long\def\gbf{\mathbf{g}}%

\global\long\def\lambf{\bm{\lambda}}%

\global\long\def\alphabf{\bm{\alpha}}%

\global\long\def\sigmabf{\bm{\sigma}}%

\global\long\def\thetabf{\bm{\theta}}%

\global\long\def\deltabf{\bm{\delta}}%

\global\long\def\lbf{\mathbf{l}}%

\global\long\def\ubf{\mathbf{u}}%

\global\long\def\pbf{\mathbf{\mathbf{p}}}%

\global\long\def\vbf{\mathbf{v}}%

\global\long\def\wbf{\mathbf{w}}%

\global\long\def\xbf{\mathbf{x}}%

\global\long\def\ybf{\mathbf{y}}%

\global\long\def\zbf{\mathbf{z}}%

\global\long\def\dbf{\mathbf{d}}%

\global\long\def\Wbf{\mathbf{W}}%

\global\long\def\Abf{\mathbf{A}}%

\global\long\def\Gbf{\mathbf{G}}%

\global\long\def\Ubf{\mathbf{U}}%

\global\long\def\Pbf{\mathbf{P}}%

\global\long\def\Ibf{\mathbf{I}}%

\global\long\def\Ebf{\mathbf{E}}%

\global\long\def\Mbf{\mathbf{M}}%

\global\long\def\Dbf{\mathbf{D}}%

\global\long\def\Qbf{\mathbf{Q}}%

\global\long\def\Lbf{\mathbf{L}}%

\global\long\def\Pbf{\mathbf{P}}%

\global\long\def\Xbf{\mathbf{X}}%

\global\long\def\abm{\bm{a}}%

\global\long\def\bbm{\bm{b}}%

\global\long\def\cbm{\bm{c}}%

\global\long\def\dbm{\bm{d}}%

\global\long\def\ebm{\bm{e}}%

\global\long\def\fbm{\bm{f}}%

\global\long\def\gbm{\bm{g}}%

\global\long\def\hbm{\bm{h}}%

\global\long\def\pbm{\bm{p}}%

\global\long\def\qbm{\bm{q}}%

\global\long\def\rbm{\bm{r}}%

\global\long\def\sbm{\bm{s}}%

\global\long\def\tbm{\bm{t}}%

\global\long\def\ubm{\bm{u}}%

\global\long\def\vbm{\bm{v}}%

\global\long\def\wbm{\bm{w}}%

\global\long\def\xbm{\bm{x}}%

\global\long\def\ybm{\bm{y}}%

\global\long\def\zbm{\bm{z}}%

\global\long\def\Abm{\bm{A}}%

\global\long\def\Bbm{\bm{B}}%

\global\long\def\Cbm{\bm{C}}%

\global\long\def\Dbm{\bm{D}}%

\global\long\def\Ebm{\bm{E}}%

\global\long\def\Fbm{\bm{F}}%

\global\long\def\Gbm{\bm{G}}%

\global\long\def\Hbm{\bm{H}}%

\global\long\def\Ibm{\bm{I}}%

\global\long\def\Jbm{\bm{J}}%

\global\long\def\Lbm{\bm{L}}%

\global\long\def\Obm{\bm{O}}%

\global\long\def\Pbm{\bm{P}}%

\global\long\def\Qbm{\bm{Q}}%

\global\long\def\Rbm{\bm{R}}%

\global\long\def\Ubm{\bm{U}}%

\global\long\def\Vbm{\bm{V}}%

\global\long\def\Wbm{\bm{W}}%

\global\long\def\Xbm{\bm{X}}%

\global\long\def\Ybm{\bm{Y}}%

\global\long\def\Zbm{\bm{Z}}%

\global\long\def\lambm{\bm{\lambda}}%

\global\long\def\alphabm{\bm{\alpha}}%

\global\long\def\albm{\bm{\alpha}}%

\global\long\def\taubm{\bm{\tau}}%

\global\long\def\mubm{\bm{\mu}}%

\global\long\def\yrm{\mathrm{y}}%

\global\long\def\rone{\text{\ensuremath{\brbra{\textrm{I}}}}}%

\global\long\def\rtwo{\brbra{\text{II}}}%

\global\long\def\rthree{\brbra{\text{\textrm{III}}}}%

\global\long\def\rfour{\brbra{\text{\textrm{IV}}}}%

\global\long\def\rfive{\brbra{\text{V}}}%

\global\long\def\rsix{\brbra{\text{\textrm{VI}}}}%

\global\long\def\rseven{\brbra{\text{VI\textrm{I}}}}%

\global\long\def\reight{\brbra{\text{VI\textrm{I}I}}}%

\global\long\def\aleq{\overset{(a)}{\leq}}%

\global\long\def\bleq{\overset{(b)}{\leq}}%

\global\long\def\cleq{\overset{(c)}{\leq}}%

\global\long\def\dleq{\overset{(d)}{\leq}}%

\global\long\def\ageq{\overset{(a)}{\geq}}%

\global\long\def\bgeq{\overset{(b)}{\geq}}%

\global\long\def\cgeq{\overset{(c)}{\geq}}%

\global\long\def\beq{\overset{(b)}{=}}%

\global\long\def\ceq{\overset{(c)}{=}}%

\global\long\def\deq{\overset{(d)}{=}}%

\global\long\def\vbfp{\vbf_{\text{p}}}%

\global\long\def\vbfd{\vbf_{\text{d}}}%

\global\long\def\tp{t_{\text{p}}}%

\global\long\def\td{t_{\text{d}}}%

\global\long\def\tab{\qquad}%

\global\long\def\btab{\hspace{1.2cm}}%

\global\long\def\bbtab{\hspace{1.8cm}}%

\global\long\def\Lin{\operatorname{Lin}}%

\global\long\def\Span{\operatorname{Span}}%

\global\long\def\supp{\operatorname{supp}}%

\global\long\def\holder{\text{Hölder}}%
\global\long\def\apex{\text{APEX}}%
\global\long\def\pws{\text{PWS}}%
\global\long\def\rapex{\text{r}\apex}%
\global\long\def\flag{\text{\textbf{Flag}}}%
\global\long\def\false{\text{\textbf{\text{False}}}}%
\global\long\def\true{\text{\textbf{True}}}%
\global\long\def\Wcer{\text{W-certificate}}%

\global\long\def\Adet{\mbb A_{\text{det}}}%
\global\long\def\Azr{\mbb A_{\text{zr}}}%
\global\long\def\calZA{\mcal Z_{\mcal A}}%
\global\long\def\onestep{\text{One-Step}}%
\global\long\def\quarflag{\text{\textbf{Cert-Flag}}}%
\global\long\def\wrapex{\texttt{wrAPEX}}%
\global\long\def\maxquad{\texttt{MAXQUAD}}%

\newcommand{\jim}[1]{\textcolor{red}{\textbf{#1}}}
\newcommand{\lzw}[1]{\textcolor{blue}{\textbf{#1}}}

\global\long\def\holder{\text{Hölder}}%
\global\long\def\apps{\texttt{APPS}}%
\global\long\def\pws{\text{PWS}}%
\global\long\def\rapps{\text{r}\apps}%
\global\long\def\flag{\text{\textbf{Flag}}}%
\global\long\def\false{\text{\textbf{\text{False}}}}%
\global\long\def\true{\text{\textbf{True}}}%
\global\long\def\Wcer{\text{W-certificate}}%
\global\long\def\douflag{\text{\textbf{Dou-Flag}}}%

\global\long\def\Adet{\mbb A_{\text{det}}}%
\global\long\def\Azr{\mbb A_{\text{zr}}}%
\global\long\def\calZA{\mcal Z_{\mcal A}}%
\global\long\def\onestep{\text{One-step}}%
\global\long\def\halfflag{\text{\textbf{Half-Flag}}}%
\global\long\def\paramflag{\text{\textbf{Param-Flag}}}%
\global\long\def\none{\textbf{None}}%
\global\long\def\PWcer{\text{Penalty W-certificate}}%
\global\long\def\PWcerpair{\text{Paired-cut Penalty W-certificate}}%
\global\long\def\onestepplus{\text{One-Step}^{+}}%
\global\long\def\apexpplus{\texttt{APEX}^{+}}%
\global\long\def\xxx{\red{xxxx}}%
\global\long\def\adjustPara{\texttt{ParamAdjust}}%
\global\long\def\holder{\text{Hölder}}%
\global\long\def\apex{\text{APEX}}%
\global\long\def\pws{\text{PWS}}%
\global\long\def\rapex{\text{r}\apex}%
\global\long\def\flag{\text{\textbf{Flag}}}%
\global\long\def\Wcer{\text{W-certificate}}%
\global\long\def\halfflag{\text{\textbf{Half-Flag}}}%
\global\long\def\lowerflag{\text{\textbf{Lower-Flag}}}%
\global\long\def\Adet{\mbb A_{\text{det}}}%
\global\long\def\Azr{\mbb A_{\text{zr}}}%
\global\long\def\calZA{\mcal Z_{\mcal A}}%
\global\long\def\onestep{\text{One-Step}}%
\global\long\def\quadflag{\text{\textbf{Quad-Flag}}}%
\global\long\def\wrapex{\text{wrAPEX}}%
\global\long\def\xxxx{\red{xxxx}}%
\global\long\def\rapexW{\text{rAPEX-W}}%
\global\long\def\rapexC{\text{rAPEX-C}}%
\global\long\def\convexCert{\text{6.2}}%
\global\long\def\papex{\text{Penalty APEX}}%
\global\long\def\constrNum{{m}}
\global\long\def\activeconstrNum{\abs{\mcal A}}
\global\long\def\activeconstrSet{{\mcal A}}%
\global\long\def\guessTime{\mathcal{T}}

\global\long\def\bundleSize{B}%
\global\long\def\Cert{\text{Cert}}%
\global\long\def\pawg{\mathcal{AWG}^+}
\global\long\def\pagr{\mathcal{AGR}^+}
\global\long\def\pawgl{\mathcal{AWGL}^+}
\global\long\def\pagrl{\mathcal{AGRL}^+}
\global\long\def\activeConstrNum{m^+} 

\title{Accelerated Prox-Level Methods for Unknown Piecewise-Smooth Optimization II: Function-constrained Optimization}

\author{Zhenwei Lin\thanks{lin2193@purdue.edu, School of Industrial Engineering, Purdue University} \qquad\qquad\quad Zhe Zhang \thanks{zhan5111@purdue.edu, School of Industrial Engineering, Purdue University}  }

\maketitle

\begin{abstract}
    We introduce an anytime, almost parameter-free algorithm for convex function-constrained optimization, in which the objective and constraint functions are \textit{unknown} piecewise-smooth. Our algorithm, Restarted Penalty APEX (Penalty Accelerated Prox-level method
    for Exploring Piecewise Smoothness), is an accelerated bundle-level method based on the penalty
    approach. For problems satisfying quadratic growth, the proposed method is the first to achieve optimal oracle complexity without knowing the growth modulus or the exact penalty coefficient.
    Furthermore, Restarted Penalty APEX generates a verifiable certificate that bounds both the optimality gap and the constraint violation. This certificate also appears to be new to the literature.
\end{abstract}
\section{Introduction}

We focus on the following convex function-constrained optimization problem:
\begin{equation}
    \min_{x \in X} \ f(x) \quad \text{subject to} \quad \mcal G(x) := \max \left\{g_1(x),\, g_2(x),\, \ldots,\, g_{\constrNum}(x)\right\} \leq 0\ , \label{eq:initial_FP}
\end{equation}
where $X$ is a simple closed convex set, $f : X \to \mathbb{R}$ is a convex objective, and $g_i : X \to \mathbb{R}$ are convex constraint functions.
In this paper, we consider the penalty reformulation of~\eqref{eq:initial_FP}:
\begin{equation}
    \label{eq:exact_penalty_reformulation}
    \min_{x\in X}\
    \phi(x;\gamma)
    := f(x)+\gamma\bsbra{\mcal G(x)}_+ .
\end{equation}
Given a target accuracy $\vep>0$, our goal is to design an algorithm that computes a point $x_{\vep}\in X$ satisfying
\begin{equation}\label{eq:vep_optimal}
    \max\bcbra{f(x_{\vep}) - f^*,\, {\mcal G(x_{\vep})}} \leq \vep\, ,
\end{equation}
where $f^*$ denotes the optimal value of~\eqref{eq:initial_FP}.

We impose two additional structural assumptions.
First, the objective and constraint functions are \textit{unknown} piecewise-smooth (\pws) (see Definition~\ref{def:pws}).
Unknown PWS structure arises naturally in function-constrained models,  including CVaR constraints and hinge-based classification models~\cite{rockafellar00cvar,cheng2022functional,rigollet2011neyman,cotter2019optimization}.
Second, for {the optimal} dual multiplier $\lambda^*$, we assume the penalty function $\phi(\cdot;2\norm{\lambda^*}_1)$ satisfies a quadratic growth (QG) condition with modulus $\mu^*>0$:
\begin{equation}\label{eq:optimal_lag_qg}
    \phi(x;2\norm{\lambda^*}_1) - f^*
    \geq \frac{\mu^*}{2}\dist^2(x,X^*)\ ,\  \forall x\in X\ ,\
\end{equation}
where $X^*$ denotes the optimal solution set.
{QG condition of the optimal Lagrangian function and the optimal level-set function  \cite{lin2025adaptiveparameterfreeprojectionfreerestarting} imply \eqref{eq:optimal_lag_qg}. In general, when the objective or an active constraint is strongly convex or an optimal solution lies on the boundary of the feasible region, condition \eqref{eq:optimal_lag_qg} would be satisfied for some $\mu^*>0$. See Assumption~\ref{assu:quadratic_growth} for further details.}



Both the \pws{} structure and the QG condition fundamentally influence the convergence rates attainable by first-order methods.
A \(\pws\)
function can be treated as a generic nonsmooth function. Such a treatment
degrades the convergence rate
from linear to sublinear, even under the QG condition~\cite{zhang2025linearly,partOne}.
On the other hand,
error-bound conditions can yield improved complexity for first-order methods~\cite{drusvyatskiy2016errorboundsquadraticgrowth,bolte2017error}.
However, estimating the unknown QG modulus $\mu^*$ is particularly challenging as it depends on the penalty parameter $\gamma$ in \eqref{eq:exact_penalty_reformulation}. When $\gamma$ is much smaller than $\norm{\lambda^*}_1$, the penalty function $\phi(\cdot;\gamma)$ may fail to satisfy QG with any positive modulus.
When $\gamma$ is too large, the smoothness constant for $\phi(\cdot;\gamma)$~\footnote{
    Here the smoothness constant is defined in the piecewise smoothness constant of the penalty function $\phi(x;\gamma)$ (see Definition~\ref{def:pws}).} could be too large for efficient optimization.

Against this background, the literature on function-constrained optimization leaves
two major gaps.

\paragraph{\uline{Lack of parameter-free methods with optimal first-order oracle complexity for~\eqref{eq:initial_FP} under smoothness.}}


When the objective and constraint functions are $L$-smooth and the objective is $\mu$-strongly convex, \citet{zhang2022solving} establish the first-order oracle lower bound
$\Omega(\sqrt{(1+\norm{\lambda^*}_1)L/\mu^*}\log(1/\vep))$
and propose accelerated methods attaining the matching upper bound.
Achieving comparable complexity under the more general QG condition~\eqref{eq:optimal_lag_qg} remains less understood.

Recent work by~\citet{lin2025adaptiveparameterfreeprojectionfreerestarting} makes important progress toward this goal by developing a parameter-free level-set method under a related error-bound condition.
A quadratic error bound of this form implies~\eqref{eq:optimal_lag_qg}; see the discussion following Assumption~\ref{assu:quadratic_growth}.
It adapts a parallel root-finding scheme~\cite{renegar2016efficient,renegar2022simple} to search for the unknown optimal value $f^*$ by maintaining multiple level-set subproblems. Crucially, its prescribed number of parallel threads is determined by a computable upper bound on $\norm{\lambda^*}_1$, obtained from a strictly feasible point~\cite{nesterov2018lectures}.
The resulting complexity depends on this potentially conservative upper bound, {leading to a suboptimal complexity bound}. See Table~\ref{tab:complexity_comparison_function_constrained} for more details.
\paragraph{\uline{Lack of efficient and verifiable certificates for function-constrained optimization under PWS.}}
In function-constrained optimization, a point $x$ is commonly called an $\vep$-optimal solution~\cite{nesterov2018lectures} if~\eqref{eq:vep_optimal} holds. Because $f^*$ is generally unknown, this definition does not by itself give a directly verifiable stopping rule: even when a complexity result guarantees that an algorithm eventually reaches an $\vep$-optimal solution, certifying this accuracy for a particular candidate remains challenging.

Approximate KKT conditions~\cite{nocedal2006numerical} are another type of verifiable certificate and are widely used as practical stopping criteria, which is widely used as practical stopping criteria in constrained optimization~\cite{boob2025level,jia2025first,LiQu2021IPALM,xu2021augmented,Xu2022_small_constraints}.
However, in the $\pws$ setting studied here, KKT residuals do not directly provide the certificate we seek.  A typical stationarity requirement for an $\vep$-approximate KKT pair is
\[
    \dist \brbra{0, \partial f(x_{\epsilon})+\sum_{i=1}^{m}\lambda_{\varepsilon,(i)}\partial g_i(x_{\epsilon})+N_X(x_{\epsilon}) }\leq\varepsilon.
\]
For a $\pws$ function, the subdifferential mapping need not be continuous.
Consequently, the residual can remain bounded away from zero even arbitrarily close to a solution.

Taken together, these observations suggest a central question for function-constrained optimization under QG:
\[\ovalbox{\begin{minipage}{0.8\columnwidth - 2\fboxsep - 0.8pt}%
            \centering \it
            Is it possible to develop a parameter-free method for function-constrained optimization that achieves the optimal convergence rate under QG and, at the same time, produces a verifiable  termination certificate?
        \end{minipage}}
\]


In this paper, we provide an affirmative answer.
Building on the APEX framework developed in our previous work~\cite{partOne}, we systematically address both of the aforementioned gaps.
Our contributions are summarized as follows:
\begin{enumerate}

    \item \textbf{Penalty normalized Wolfe certificate for function-constrained optimization.}
          In Section~\ref{subsec:CWcer}, we apply an accelerated bundle-level procedure to the exact-penalty formulation~\eqref{eq:exact_penalty_reformulation} and use finitely many first-order oracle evaluations to construct a penalty normalized Wolfe certificate that extends the normalized Wolfe certificate for unconstrained PWS optimization~\cite{zhang2025linearly,partOne}.
          This certificate directly bounds the constraint violation and, together with the QG modulus, yields an upper bound on the optimality gap. These properties make it a computable and verifiable termination criterion for function-constrained optimization. 
Furthermore, the certified optimality-gap bound depends explicitly on the penalty parameter and QG modulus. This dependence enables the guess-and-check scheme used in the almost parameter-free algorithm described next.

    \item \textbf{Anytime, almost parameter-free algorithm.}
          We introduce Restarted $\papex$, an accelerated bundle-level method that uses guess-and-check restarts to adaptively estimate both the penalty coefficient and the QG modulus.
          The method is anytime: it does not require the target accuracy $\vep$ as input.
          The qualifier "\textit{almost}" refers solely to the bundle-size condition $\bundleSize\geq k_f+k_{\mcal G}$ required by the stated worst-case PWS complexity bound, where $k_f$ and $k_{\mcal G}$ denote the numbers of smooth pieces in $f$ and $\mcal G$, respectively. The method requires no prior knowledge of the penalty coefficient or the QG modulus.
          Notably, when all functions are smooth, Restarted $\papex$ becomes fully parameter-free with bundle size $\bundleSize=\activeConstrNum+1$. Here, $\activeConstrNum$ denotes the number of constraints that are not uniformly inactive on $X$.
          More generally, if every function in~\eqref{eq:initial_FP} is $(k,L)$-\pws, the method preserves the logarithmic dependence on $\vep$ with complexity $O(\activeConstrNum k\sqrt{\norm{\lambda^*}_1L/\mu}\log(1/\vep))$.  To the best of our knowledge, this is the first anytime, almost parameter-free method with this guarantee. Full complexity results are summarized in Table~\ref{tab:complexity_comparison_function_constrained}.
    \item \textbf{Promising practical performance.}
          We evaluate our approach on three benchmark classes: convex QCQPs, Neyman-Pearson classification (NPC), and fairness-aware classification. The simulated QCQP instances let us control smoothness and the number of constraints, providing a controlled test of the theory. The NPC and fairness-aware classification experiments test the method on real-world datasets. Across these settings, the proposed almost parameter-free algorithm exhibits strong practical performance for function-constrained optimization.
\end{enumerate}
\begin{table}[htbp]
    \centering
    \caption{
        First-order oracle complexity comparison for function-constrained optimization when each component is $L$-smooth.
        "$\bar{\gamma}$" is an upper bound on $\norm{\lambda^*}_1$ obtained from a strictly feasible point.
        \label{tab:complexity_comparison_function_constrained}%
    }
    \small
    \resizebox{\textwidth}{!}{%
        \begin{tabular}{@{}lccccc@{}}
            \toprule
            \multicolumn{1}{c}{\multirow{2}[2]{*}{Methods}}                 & Parameter-                    & Any-                          & Handles QG                    & \multicolumn{1}{c}{\multirow{2}[2]{*}{Certificate}} & \multicolumn{1}{c}{First-order oracle complexity}                                             \\
                                                                            & free                          & time                          & \eqref{eq:optimal_lag_qg}     &                                                     & \multicolumn{1}{c}{(up to constant)}                                                          \\
            \midrule
            TIS~\cite{deng2024uniformly}                                    & \ding{51}                     & \ding{51}                     & \ding{55}                     & \ding{51}                                           & $\min\bcbra{\norm{\lambda^*}_1\log\brbra{\norm{\lambda^*}_1},\,\log(1/\vep)}\big/\sqrt{\vep}$ \\
            ACGD-S~\cite{zhang2022solving}                                  & \ding{55}                     & \ding{51}                     & \ding{55}                     & \ding{51}                                           & $\sqrt{\norm{\lambda^*}_1L/\mu^*}\log(1/\vep)$                                                \\
            UFCM~\cite{zoll2025universally}                                 & \ding{55}                     & \ding{55}                     & \ding{55}                     & \ding{55}                                           & $\sqrt{\norm{\lambda^*}_1L/\mu^*}\log(1/\vep)$                                                \\
            RLS~\cite{lin2025adaptiveparameterfreeprojectionfreerestarting} & \ding{51}                     & \ding{55}                     & \ding{51}                     & \ding{55}                                           & $\bar{\gamma}^2\log(\bar{\gamma})\sqrt{L/\mu^*}(\log(1/\vep))^2$                              \\\cmidrule{2-6}
            {Restarted Penalty }                                            & \multirow{2}[1]{*}{\ding{51}} & \multirow{2}[1]{*}{\ding{51}} & \multirow{2}[1]{*}{\ding{51}} & \multirow{2}[1]{*}{\ding{51}}                       & \multirow{2}[1]{*}{$\sqrt{\norm{\lambda^*}_1L/\mu^*}\log\brbra{1/\vep}$}                      \\
            APEX (Alg~\ref{alg:rAPEXC})                                     &                               &                               &                               &                                                     &                                                                                               \\
            \bottomrule
        \end{tabular}
    }
\end{table}%
\subsection{Related work}
For clarity and brevity, our literature review focuses on the works most closely related to function-constrained optimization, and we refer readers to~\cite{partOne} for a more comprehensive discussion of bundle-level methods and certificates for optimization without functional constraints.

\paragraph{First-order methods for function-constrained optimization.} For large-scale function-constrained optimization, existing first-order methods can be broadly organized into several lines: penalty-based methods and augmented Lagrangian approaches~\cite{LanMon13-1,xu2021iteration,xu2021first,Xu2022_small_constraints}, primal-dual methods~\cite{chambolle2011first,xu2020primal,hamedani2021primal,NEURIPS2024_8d8e060d,boob2019proximal}, switching subgradient methods~\cite{huang2023oracle,swtichgradientpolyak}
and level-set approaches~\cite{aravkin2019level,deng2024uniformly,lin2018level_finite_sum,lin2018level,lin2025adaptiveparameterfreeprojectionfreerestarting,nesterov2013introductory,nesterov2018lectures}.
Much of this literature measures computational complexity by the number of matrix-vector products.
In related affinely constrained smooth convex settings, ~\citet{ouyang2018lowercomplexityboundsfirstorder}
establish lower bounds that imply an $\Omega(1/\sqrt{\vep})$ dependence under strong convexity, and several methods attain the optimal computational complexity dependence.

On the other hand, from the perspective of first-order oracle complexity,~\citet{zhang2022solving} showed that function-constrained optimization can be nearly as easy as unconstrained optimization and proposed an accelerated conditional gradient method with complexity $\Ocal(\sqrt{\bar{\gamma}L/\mu}\log(1/\vep))$ for strongly convex objective functions;~\citet{zoll2025universally} extended this result to the setting in which either the objective or one active constraint is strongly convex. These methods, however, are not parameter-free and remain restricted to strongly convex settings. Error-bound and quadratic-growth conditions provide a more general route to fast convergence.
The closest work to ours is~\citet{lin2025adaptiveparameterfreeprojectionfreerestarting}, which develops a parameter-free level-set framework under an error-bound condition. Although it relaxes strong convexity, it does not combine $\norm{\lambda^*}_1$ adaptation, QG adaptation, and anytime execution. Table~\ref{tab:complexity_comparison_function_constrained} summarizes representative first-order oracle complexity results.

For the exact-penalty reformulations~\eqref{eq:exact_penalty_reformulation}, if $\gamma>\norm{\lambda^*}_1$, then the minimizers of the penalty function coincide with the minimizers of the original problem~\eqref{eq:initial_FP}~\cite{han1979exact}. Thus, if a valid upper bound on $\norm{\lambda^*}_1$ is known, one can solve the penalty reformulation with a fixed penalty coefficient, such as those in~\cite{adcock2025restarts,partOne,roulet2017sharpness}. However, in practice, $\norm{\lambda^*}_1$ is often unknown and the upper bound estimates may be conservative.

\paragraph{Certificate for function-constrained optimization.}
Approximate KKT conditions are widely used as termination criteria for constrained
optimization. In the $\pws$ setting considered here, the resulting residual may be unstable. Therefore, \citet{grimmer2023goldstein} proposed Goldstein-KKT stationarity to address nonsmoothness by using nearby subgradients for Lipschitz functional constraints; however, this framework is formulated for problems over $\mathbb{R}^n$ with functional inequalities and does not directly incorporate an additional simple set $X$.

From a primal--dual perspective, a standard certificate of approximate
optimality is the restricted primal--dual gap, which is widely used in
saddle-point analysis
\cite{chambolle2011first,hamedani2021primal,Nemirovski2005mirror}.
Let
$\Lambda_r
    :=
    \{\lambda\in\mathbb R_+^m:\|\lambda\|_2\leq r\}.$
For a candidate pair $(x_{\vep},\lambda_{\vep})$ with
$\lambda_{\vep}\in\Lambda_r$, the restricted primal--dual gap is
\begin{equation*}
    \Delta(x_{\vep},\lambda_{\vep} ) = \max_{\lambda\in \Lambda_{r}}\bcbra{f(x_\vep) + \sum_{i=1}^{m}\lambda_{(i)}g_i (x_{\vep})} - \min_{x\in X} \bcbra{f(x) + \sum_{i=1}^{m}\lambda_{\vep,(i)}g_{i}(x)}\ .
\end{equation*}
Its direct evaluation requires both a suitable dual radius $r$ and the
solution of a global Lagrangian minimization problem over $X$.
To obtain a practical certificate, \citet{zhang2022solving} introduce
a model-based PD-gap for the adaptive ACGD-S method. In their composite
formulation, where $F=f+u$, it takes the form
\begin{equation*}
    \Delta_{\rm PD,t}(r)
    :=
    F(\bar x_t)
    +
    r\sqrt{\sum_{i=1}^{\constrNum} \bsbra{g_i(\bar x_t)}_+^2}
    -
    F_{{\rm PD},t}\ ,\
    F_{{\rm PD},t}
    :=
    \min_{x\in X}
    \left\{
    \ell_f^t(x)+u(x)
    +\sum_{i=1}^m\ell_{g_i}^t(x)
    \right\},
\end{equation*}
where $\ell_f^t$ and $\ell_{g_i}^t$ are weighted affine minorants
constructed from the primal-dual iterates and $u(x)$ is a simple regularizer. This certificate bounds objective suboptimality for every
$r\geq0$ and also controls constraint violation when $r$ is sufficiently
large relative to the norm of an optimal multiplier.
A level-set-based certificate is developed by
\citet{deng2024uniformly}. They consider the value function
$V(\eta):=
    \min_{x\in X}v(x,\eta),
    v(x,\eta)
    :=
    \max\{f(x)-\eta,\mcal G(x)\}.$
For each level $\eta_t$, the APL~\cite{lan2015bundle} subroutine maintains computable bounds $l_t\leq V(\eta_t)\leq
    v(x_t,\eta_t),$ where the lower bound $l_t$ is calculated by minimizing the affine minorants constructed from past iterates over the set $X$.
A common limitation of these model-based certificates is their reliance
on a global lower-model problem. \citet{deng2024uniformly} explicitly assume that
$X$ is bounded, whereas the ACGD-S~\cite{zhang2022solving} certificate requires the
affine-composite problem defining $F_{{\rm PD},t}$ to be bounded below
and efficiently solvable. In particular, on an unbounded domain without
coercive regularization, $F_{{\rm PD},t}$ may equal $-\infty$, making
the certificate uninformative.

\subsection{Outline}
The remainder of the paper is organized as follows. We conclude this section by introducing notation. Section~\ref{sec:preliminaries} reviews the necessary preliminaries. Section~\ref{sec:penalty_apex} develops the penalty accelerated prox-level method for exact-penalty reformulations. Section~\ref{subsec:CWcer} introduces the penalty normalized Wolfe certificate, and Section~\ref{sec:rpapex} presents the  Restarted \papex{} algorithm. Section~\ref{sec:numerical_study} reports the numerical results.

\subsection{Notation}

Throughout the paper, we use the following notation. For \(n\in\mathbb{N}_{+}\), let \([n]:=\{1,\dots,n\}\). For \(v\in\mathbb{R}^{n}\), define \(\norm v_{q}:=\brbra{\sum_{i=1}^{n}\abs{v_{(i)}}^{q}}^{1/q}\), where \(v_{(i)}\) is the \(i\)-th component of \(v\). Unless stated otherwise, \(\norm{\cdot}\) denotes the Euclidean norm. For \(u,v\in\mathbb{R}^{n}\), let \(\inner uv:=\sum_{i=1}^{n}u_{(i)}v_{(i)}\). For \(x\in\mathbb{R}^{n}\) and \(X\subseteq\mathbb{R}^{n}\), define \(\dist(x,X):=\inf_{\bar{x}\in X}\norm{x-\bar{x}}\). For a convex set \(X\subseteq\mathbb{R}^{n}\) and \(x\in X\), define the normal cone \(N_X(x):=\bcbra{v\in\mathbb{R}^{n}:\inner{v}{y-x}\leq0,\ \forall y\in X}\); for \(x\notin X\), set \(N_X(x):=\emptyset\). We write \(f'(x)\) for a subgradient of \(f\) at \(x\); if \(f\) is differentiable at \(x\), then \(f'(x)=\nabla f(x)\). The linearization of \(f\) at \(\bar{x}\) is \(\ell_{f}\brbra{x;\bar{x}}:=f\brbra{\bar{x}}+\inner{f^{\prime}\brbra{\bar{x}}}{x-\bar{x}}\). For an event \(A\), let \(\onebf_{ A}=1\) if \(A\) is true and \(\onebf_{ A}=0\) otherwise. The Euclidean ball centered at \(\bar{x}\) with radius \(\iota\) is \(\mcal B\brbra{\bar{x},\iota}:=\bcbra{x:\norm{x-\bar{x}}\leq\iota}\), and for \(x\in\mathbb{R}\), let \([x]_+:=\max\{0,x\}\).
For \(\gamma\geq0\), define the exact penalty function by \(\phi\brbra{x;\gamma}:=f(x)+\gamma\bsbra{\mcal G(x)}_{+}\).  Given a finite set of oracle evaluation points $\mcal P$, define $\psi_{\mcal P}^{+}(x;\gamma):=\max_{z\in \mcal P}\bcbra{\ell_f(x;z)} + \gamma\bsbra{\max_{z\in \mcal P}\bcbra{\ell_{\mcal G}(x;z)}}_{+}$.
In algorithm calls, we use the vertical bar $\mid$ to separate quantities
that may be updated from fixed algorithmic parameters; arguments after
$\mid$ are chosen once and remain unchanged throughout the entire algorithm.

\section{Preliminaries\label{sec:preliminaries}}
In this section, we review the main concepts used throughout the paper, including piecewise smoothness (Definition~\ref{def:pws}) and  quadratic growth (Definition~\ref{def:QG}). Throughout the paper, we assume that Assumptions~\ref{assu:dual_exist},~\ref{assu:pws_objective_con} and~\ref{assu:quadratic_growth} hold.
\begin{defn}[$(k,L)$-piecewise smoothness, $(k,L)$-\pws\label{def:pws}]
    Let $k\in\mathbb{N}_{+}$ denote the number of pieces, and let $L>0$
    denote the piecewise smoothness constant. A function $f:X\to\mbb R$ is
    $(k,L)$-piecewise smooth if there exist $k$ subsets
    $\bcbra{X_i}_{i=1}^{k}$ of $X$ that cover $X$ such that each restriction
    $f_i:=f_{\mid X_i}$ is $L$-smooth for every $i\in\bsbra{k}$.
    Specifically, we assume access to a first-order oracle $f^{\prime}(x)$
    satisfying
    \begin{equation}\label{eq:pws_L_choice}
        f\brbra x-\brbra{f(\bar{x}) + \inner{f^{\prime}(\bar{x})}{x-\bar{x}}}
        \leq \frac{L}{2}\norm{x-\bar{x}}^{2},
        \qquad \forall i\in\bsbra{k},\quad \forall x,\bar{x}\in X_i.
    \end{equation}
\end{defn}
Further discussion of this definition can be found in~\cite{partOne}.
This convention is analogous to the standard one-dimensional notion of a piecewise $C^1$ path~\cite[Sec.~10.8]{rudin1974real}.
We next introduce the quadratic growth condition, which is a standard assumption for achieving fast convergence rates in convex optimization and is  weaker than strong convexity.
\begin{defn}
    [Quadratic growth, QG\label{def:QG}] We say that $f:X\to\mbb R$
    satisfies $\mu$-quadratic growth if
    $
        f\brbra x-f^{*}\geq {\mu}\dist^{2}\brbra{x,X^{*}}/2$ holds for all $x\in X$,
    where $X^{*}$ is the nonempty minimizer set, i.e., $X^{*}=\argmin_{x\in X}f\brbra x$.
\end{defn}

We next formalize the required properties of the optimal dual multipliers.
\begin{assumption}\label{assu:dual_exist}
    The set $\Lambda^*$ of all optimal dual multipliers satisfying the KKT conditions for~\eqref{eq:initial_FP} is nonempty and bounded. Moreover, suppose that a minimum-$\ell_1$-norm multiplier $\lambda^*\in\argmin_{\lambda\in\Lambda^*}\bcbra{\norm{\lambda}_1}$ satisfies $\norm{\lambda^*}_1\geq 1$.
\end{assumption}

We next impose the following additional piecewise smoothness assumption.

\begin{assumption}\label{assu:pws_objective_con}
    Suppose $f$ is $(k_f,L_f)$-$\pws$ and each $g_i$ is $(k_{g_i},L_{g_i})$-$\pws$. Let
    \[
        \activeconstrSet:=\bcbra{i\in[\constrNum]: \sup_{x\in X}g_i(x)\geq0}
    \]
    denote the set of potentially active constraints. Then
    ${\mcal G(x)}=\max_{i\in\activeconstrSet} g_i(x)$
    is $(k_{\mcal G},L_{\mcal G})$-$\pws$, with
    $k_{\mcal G}\leq \sum_{i\in \activeconstrSet}k_{g_i}$ and
    $L_{\mcal G}\leq \max_{i\in \activeconstrSet}L_{g_i}$. Furthermore, we define $\activeConstrNum = |\activeconstrSet|$ as the number of potentially active constraints.
\end{assumption}

In addition to the existence of an optimal dual multiplier (Assumption~\ref{assu:dual_exist}) and the $\pws$ property of the  penalty function (Assumption~\ref{assu:pws_objective_con}), we also require the following QG condition for the penalty function:
\begin{assumption}
    \label{assu:quadratic_growth}
    Define
    $\phi\brbra{x;2\norm{\lambda^*}_1}
        =f\brbra x+2\norm{\lambda^*}_1\bsbra{\mcal G\brbra x}_{+}$. We assume that
    $\phi\brbra{x;2\norm{\lambda^*}_1}$ has a positive optimal QG modulus, denoted by
    \[
        \mu^* := \sup \bcbra{\mu>0:\phi\brbra{x;2\norm{\lambda^*}_1} \text{ is } \mu\text{-QG}}>0\ .
    \]
\end{assumption}
We provide several remarks regarding Assumption~\ref{assu:quadratic_growth}. First, although the assumption is stated for the exact-penalty function, it follows from several familiar conditions on the original problem:
\begin{enumerate}
    \item The objective function $f(x)$ is $\mu^*$-strongly convex;
    \item If each $g_i(x)$ is $\mu_i$-strongly convex~\cite{lin2018level,NEURIPS2024_8d8e060d} and Assumption~\ref{assu:dual_exist} holds with $\norm{\lambda^*}_1>0$, then the $\phi(x; 2\norm{\lambda^*}_1)$ is QG with modulus at least $\mu^* = \sum_{i=1}^{m}\lambda^*_{(i)}\mu_i$;
    \item If the optimal Lagrangian $\mathcal{L}(x, \lambda^*) = f(x) + \sum_{i=1}^{\constrNum} \lambda_{(i)}^* g_i(x)$ is $\mu^*$-QG, then, noting that $\sum_{i=1}^{\constrNum} \lambda_{(i)}^* g_i(x) \leq \norm{\lambda^*}_1 [\mcal G(x)]_{+}$, it follows that $\phi(x; 2\norm{\lambda^*}_1)$ satisfies $\mu^*$-QG.
\end{enumerate}

Second, even if neither $f(x)$ nor $\mcal G(x)$ individually satisfies the QG property, it remains possible for the optimal Lagrangian to exhibit quadratic growth. For example, consider the setting from~\cite{guigues2024universal} where $f(x) =  e^{x}$ and $\mcal G(x) = e^{-x} - 1 \leq 0$. In this case, the optimal dual multiplier is $\lambda^* = 1$. Although $f(x)$ and $\mcal G(x)$ alone do not satisfy the QG property, the optimal Lagrangian $\mathcal{L}(x, \lambda^*) =  e^{x} +  (e^{-x} - 1)$ is $2$-QG. This matches the third scenario described above, showing that the corresponding penalty function has QG modulus at least $2$ and therefore satisfies Assumption~\ref{assu:quadratic_growth}.

Third, Lin et al.~\cite{lin2025adaptiveparameterfreeprojectionfreerestarting} consider an error bound condition for~\eqref{eq:initial_FP} that, in the context of QG, takes the form $\max\big\{f(x)-f^*,\,\mcal G(x)\big\}\geq {\mu}\dist^2\brbra{x, X^*}/{2}$. This reflects a particular error bound condition for the level set function, where the roles of the objective and constraint are treated symmetrically.
Suppose the norm of an optimal dual variable, $\norm{\lambda^*}_1> 0$, is known. Then one can consider the equivalent formulation:
\begin{equation*}
    \min_{x\in X} \ f(x)  \quad \text{s.t.}\  \norm{\lambda^*}_1 \mcal G(x) \leq 0\ .
\end{equation*}
This allows us to analogously define a QG property for the new level set function:
$$
    \max\big\{f(x)-f^*,\,\norm{\lambda^*}_1 \mcal G(x)\big\} \geq {\mu^*}\dist^2(x,X^*)/{2}.
$$
Moreover, the following inequalities hold:
\begin{equation*}
    \begin{aligned}
         & \max\big\{f(x)-f^*,\,\norm{\lambda^*}_1 \mcal G(x)\big\}
        \leq \max\big\{f(x)-f^*+\norm{\lambda^*}_1 [\mcal G(x)]_+,\,\norm{\lambda^*}_1 [\mcal G(x)]_+\big\} \\
         & \leq f(x)-f^* + 2\norm{\lambda^*}_1 [\mcal G(x)]_+
        = \phi(x; 2\norm{\lambda^*}_1) - f^*.
    \end{aligned}
\end{equation*}
Consequently, the error bound condition considered in~\cite{lin2025adaptiveparameterfreeprojectionfreerestarting} implies Assumption~\ref{assu:quadratic_growth}, up to a constant factor adjustment.

\section{Penalty APEX for exact-penalty reformulations\label{sec:penalty_apex}}

We introduce {\papex} (Penalty \uline{A}ccelerated \uline{P}rox-level method for \uline{Ex}ploring Piecewise Smoothness), an extension of {\apex} for convex function-constrained optimization. For a fixed penalty parameter $\gamma$, {\papex} applies the accelerated bundle-level mechanism to the exact penalty function $\phi(x;\gamma)=f(x)+\gamma[\mcal G(x)]_+$. Like {\apex}, {\papex} adopts a double-loop structure: each outer iteration, implemented by $\onestepplus$ in Algorithm~\ref{alg:One-step_plus}, consists of $\bundleSize$ inner steps, where $\bundleSize$ denotes the bundle size. At each inner step, the method evaluates first-order information at the averaged points $\underline{x}^{t,i}$, projects the fixed reference point $\bar{y}$ onto the current level set $X(t,i)$, and updates the incumbent iterate $\hat{x}^{t + 1}$ so that the penalty objective value is non-increasing.

The key difference from {\apex} is the cutting-plane model. For any point set $\mcal P$, the following inequality holds:
\begin{equation}\label{eq:key_observation}
    \max_{z\in\mcal P}\bcbra{\ell_{f}(x;z)+\gamma\ell_{\bsbra{\mcal G}_{+}}(x;z)}\leq\max_{z\in\mcal P}\bcbra{\ell_{f}(x;z)}+\gamma\bsbra{\max_{z\in\mcal P}\bcbra{\ell_{\mcal G}(x;z)}}_{+}\leq f(x)+\gamma[\mcal G(x)]_{+}\ ,
\end{equation}
where $\ell_{\bsbra{\mcal G}_{+}}(x;z) = \bsbra{\mcal G(z)}_+ + \inner{\bsbra{\mcal G(z)}_+^\prime}{x-z}$.
Thus, {\papex} replaces the standard cutting-plane model in {\apex} (the first term in~\eqref{eq:key_observation}) with the penalty model (the middle term in~\eqref{eq:key_observation}). This modification is tailored to the exact-penalty structure and extends the {\apex} framework of~\cite{partOne} to function-constrained optimization.

\begin{algorithm}[h]
    \caption{$\protect\onestepplus\protect\brbra{\protect\gamma,\bar{y},\hat{x}^{t},x^{t,0},\tilde{l},\bundleSize}$\protect\label{alg:One-step_plus}}
    \begin{algorithmic}[1]
        \State \textbf{Initialize:} $x^{t+1,0}=x^{t,0},\hat{x}^{t+1} = \hat{x}^{t}$, $\phi\brbra{x;\gamma}= f\brbra x+\gamma\bsbra{\mcal G\brbra x}_{+}$, $\bar{X}(t,0)=\bcbra{x:\inner{x-x^{t,0}}{x^{t,0}-\bar{y}}\geq0}$
        \For{$i=1,\ldots,\bundleSize$}
        \State $\underline{x}^{t,i}=\brbra{1-\alpha_{t}}\hat{x}^{t}+\alpha_{t}x^{t,i-1}$
        \State $X(t,i)=\bcbra{x\in X:\max_{j\in[i]}\bcbra{\ell_{f}(x;\underline{x}^{t,j}) + \gamma[\max_{j\in[i]}\ell_{\mcal G}(x;\underline{x}^{t,j})]_+} \leq \tilde{l}}\cap\bar{X}\brbra{t,0}$
        \State Compute
        \begin{equation}
            x^{t,i}\leftarrow\argmin_{x\in X(t,i)}\norm{x-\bar{y}}^{2}\label{eq:subproblems_plus}
        \end{equation}
        \If{the subproblem is infeasible}
        \State \textbf{return} $\brbra{\hat{x}^{t+1},x^{t+1,0},\true}$\label{enu:infeasible_plus}\Comment{$\true$: infeasibility implies a  certificate}
        \EndIf
        \State $\bar{x}^{t,i}=\brbra{1-\alpha_{t}}\hat{x}^{t}+\alpha_{t}x^{t,i}$
        \State Choose $\hat{x}^{t+1}$ such that $\phi\brbra{\hat{x}^{t+1};\gamma}=\min\bcbra{\phi\brbra{\hat{x}^{t};\gamma},\min_{j\leq i}\bcbra{\phi\brbra{\bar{x}^{t,j};\gamma}}}$ and set $x^{t+1,0}=x^{t,i}$
        \EndFor
        \State \textbf{return} $\brbra{\hat{x}^{t + 1},x^{t+1,0},\false}$\label{enu:return_output}\Comment{$\false$: no certificate}
    \end{algorithmic}
\end{algorithm}

\begin{algorithm}
    \caption{\uline{P}enalty \uline{A}ccelerated \uline{P}rox-level Method for \uline{Ex}ploring Piecewise Smoothness ($\protect\papex$)}
    \label{alg:papex}
    \begin{algorithmic}[1]
        \Require penalty parameter $\gamma$, candidate solution $\bar{y}$, test level $\tilde{l}$, and the number of cuts $\bundleSize$
        \State \textbf{Initialize:} $x^{1}=\bar{y},\hat{x}^{1}=\bar{y}$
        \For{$t=1,\ldots,N$}
        \State $\brbra{\hat{x}^{t+1},x^{t+1},\ldots}\leftarrow\onestepplus\brbra{\gamma, \bar{y},\hat{x}^{t},x^{t},\tilde{l},\bundleSize}$
        \EndFor
        \State \textbf{return} $\hat{x}^{N+1}$
    \end{algorithmic}
\end{algorithm}

As in the analysis of $\apex$ in~\cite{partOne}, the convergence bound for $\papex$ is expressed through empirical Lipschitz quantities. Define
\begin{equation}
    \begin{aligned}
        L_{\gamma,t}\brbra{r,l}=\bsbra{\phi(\hat{x}^{t+1};\gamma) - \tilde{l} - (1-\frac{3}{4}\alpha_t)(\phi(\hat{x}^{t};\gamma) - \tilde{l})}_+ \Big/\brbra{\alpha_t^2\|x^{t,r} - x^{t,l}\|^2/2} \\
        L_{\gamma,t}
        := \min_{0\leq l<r\leq \bundleSize}L_{\gamma,t}(r,l)\ ,\
        \bar{L}_{\gamma}\brbra N
        :=\frac{\sum_{t=1}^{N}\omega_{t}\alpha_{t}^{2}L_{\gamma,t}\onebf_{\mcal E(t)}\norm{x^{t,r_{t}}-x^{t,l_{t}}}^{2}/2}{\sum_{t=1}^{N}\norm{x^{t,r_{t}}-x^{t,l_{t}}}^{2}} .
    \end{aligned}
    \label{eq:L_phi_N}
\end{equation}
Here, the slow-descent event is defined as
\begin{equation}\label{eq:slow_event}
    \mcal E(t)=\bcbra{\phi\brbra{\hat{x}^{t+1};\gamma}-\tilde{l}>\brbra{1-\frac{\alpha_t}{2}}\brbra{\phi(\hat{x}^{t};\gamma)-\tilde{l}}}.
\end{equation}
This event occurs when the new residual
$\phi\brbra{\hat{x}^{t+1};\gamma}-\tilde{l}$ fails to achieve the benchmark
contraction of the previous residual at rate $1-\alpha_t/2$. On such
iterations, the proof uses the additional curvature correction measured by
$L_{\gamma,t}$. When $\mcal E(t)$ does not occur, the benchmark contraction
already holds, and the indicator $\onebf_{\mcal E(t)}$ removes this correction
from the numerator of $\bar L_{\gamma}\brbra N$. Thus, the numerator of
$\bar L_{\gamma}\brbra N$ collects curvature contributions only from
slow-descent iterations. We use the convention $0/0=0$; if no slow-descent
iteration occurs and the denominator is positive, then
$\bar L_{\gamma}\brbra N=0$.

The next proposition gives the basic convergence properties of Algorithm~\ref{alg:papex}.
\begin{proposition}\label{prop:property_papex}
    Assume that the sequence $\bcbra{\hat{x}^{t}}$ generated by Algorithm~\ref{alg:papex} satisfies $\phi\brbra{\hat{x}^{t};\gamma}\geq\tilde{l}$ for all $t\geq1$ and that every subproblem of Algorithm~\ref{alg:One-step_plus} is feasible, and that the weights $\bcbra{\omega_{t}}$ satisfy $\omega_{t}\brbra{1-\alpha_{t}/2}=\omega_{t-1}$ for $t\geq2$. Then the following properties hold:
    \begin{enumerate}
        \item \textbf{Monotonicity of proximal distances:} For every $t\geq1$ and $0\leq i<\bundleSize$, it holds that $\norm{x^{t,i}-\bar{y}}\leq\norm{x^{t,i+1}-\bar{y}}$.
        \item \textbf{Convergence bound:} The quantity $\bar{L}_{\gamma}\brbra t$ (defined in~\eqref{eq:L_phi_N}) satisfies the following inequality:
              $$\omega_{t}\brbra{\phi\brbra{\hat{x}^{t+1};\gamma}-\tilde{l}}-\omega_{1}\brbra{1-\frac{\alpha_{1}}{2}}\brbra{\phi\brbra{\hat{x}^{1};\gamma}-\tilde{l}}\leq\bar{L}_{\gamma}\brbra t\norm{x^{t+1}-\bar{y}}^{2}\ , \ \forall t\geq 2.$$
    \end{enumerate}
\end{proposition}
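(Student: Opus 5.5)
Part 1 follows from the nested structure of the level sets inside one call of $\onestepplus$. I will first show $X(t,i+1)\subseteq X(t,i)$ for $i\ge 1$. The model at step $i+1$ takes maxima over a superset of cuts, and both $\max_j \ell_f$ and $[\max_j \ell_{\mcal G}]_+$ are monotone in the cut set, so the constraint at $i+1$ is stronger. Hence $\norm{x^{t,i}-\bar y}\le\norm{x^{t,i+1}-\bar y}$ for $i\ge1$, since $x^{t,i}$ minimizes the distance to $\bar y$ over the larger set.

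The case $i=0$ needs separate care, because $x^{t,0}$ is the last prox point of the previous round (or $\bar y$ when $t=1$). Here I will use the half-space $\bar X(t,0)=\{x:\inner{x-x^{t,0}}{x^{t,0}-\bar y}\ge0\}$. Every $x$ in this half-space satisfies $\norm{x-\bar y}^2\ge\norm{x^{t,0}-\bar y}^2+\norm{x-x^{t,0}}^2$. Since $x^{t,1}\in\bar X(t,0)$, the $i=0$ case follows. Chaining across rounds via $x^{t+1,0}=x^{t,\bundleSize}$ then gives global monotonicity.

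For Part 2, I will adapt the APEX one-round analysis of~\cite{partOne}, with the penalty model $\psi^+_{\mcal P}(x;\gamma)$ replacing the cutting-plane model. The key per-round inequality to establish is
\[
\phi(\hat x^{t+1};\gamma)-\tilde l\le (1-\tfrac{\alpha_t}{2})\bigl(\phi(\hat x^t;\gamma)-\tilde l\bigr)+\onebf_{\mcal E(t)}\tfrac{\alpha_t^2 L_{\gamma,t}}{2}\norm{x^{t,r_t}-x^{t,l_t}}^2 .
\]
On $\mcal E(t)^c$ this holds trivially. On $\mcal E(t)$ it holds by the definition of $L_{\gamma,t}(r,l)$: that definition gives $\phi(\hat x^{t+1})-\tilde l\le(1-\tfrac34\alpha_t)(\phi(\hat x^t)-\tilde l)+\alpha_t^2L_{\gamma,t}\norm{x^{t,r_t}-x^{t,l_t}}^2/2$ at the minimizing pair $(r_t,l_t)$. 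Since $\phi(\hat x^t)\ge\tilde l$, the factor $1-\tfrac34\alpha_t$ can be relaxed to $1-\alpha_t/2$. So the per-round inequality is essentially built into the definitions, and the real work lies elsewhere, as described below.

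Multiplying the per-round inequality by $\omega_t$ and using $\omega_t(1-\alpha_t/2)=\omega_{t-1}$ makes the sum telescope. The result is that $\omega_t(\phi(\hat x^{t+1})-\tilde l)-\omega_1(1-\alpha_1/2)(\phi(\hat x^1)-\tilde l)$ is bounded by $\sum_s\omega_s\alpha_s^2L_{\gamma,s}\onebf_{\mcal E(s)}\norm{x^{s,r_s}-x^{s,l_s}}^2/2$. By the definition of $\bar L_\gamma(t)$, this sum equals $\bar L_\gamma(t)\sum_s\norm{x^{s,r_s}-x^{s,l_s}}^2$.

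The main obstacle is bounding $\sum_{s\le t}\norm{x^{s,r_s}-x^{s,l_s}}^2$ by $\norm{x^{t+1}-\bar y}^2$. From the three-point inequality above, for consecutive prox points we have $\norm{x^{s,i+1}-\bar y}^2\ge\norm{x^{s,i}-\bar y}^2+\norm{x^{s,i+1}-x^{s,i}}^2$. This holds because $x^{s,i}$ is the projection of $\bar y$ onto a convex set containing $x^{s,i+1}$ (the set $\bar X(s,0)$ when $i=0$). Telescoping over all inner and outer steps gives $\sum_{s,i}\norm{x^{s,i+1}-x^{s,i}}^2\le\norm{x^{t+1}-\bar y}^2$, using $x^{1}=\bar y$.

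However, $\norm{x^{s,r}-x^{s,l}}^2$ for non-adjacent $l<r$ is not bounded by the sum of consecutive squared gaps through the triangle inequality alone, which would lose a factor of $\bundleSize$. I would instead apply the same projection argument directly to the pair. The sets are nested, $X(s,r)\subseteq X(s,l+1)\subseteq X(s,l)\cap\bar X(s,0)$, and $x^{s,l}$ projects $\bar y$ onto a set containing $x^{s,r}$. This gives $\norm{x^{s,r}-\bar y}^2-\norm{x^{s,l}-\bar y}^2\ge\norm{x^{s,r}-x^{s,l}}^2$. The left-hand side is bounded by the round's increment $\norm{x^{s,\bundleSize}-\bar y}^2-\norm{x^{s,0}-\bar y}^2$ by Part 1. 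Summing over rounds telescopes to $\norm{x^{t+1}-\bar y}^2$, which completes the bound.
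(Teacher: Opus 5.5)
Your proposal is correct and follows the paper's proof essentially step for step: nested level sets plus the half-space $\bar X(t,0)$ give monotonicity, and the per-round recursion is read off from the definition of $L_{\gamma,t}$ and relaxed to $1-\alpha_t/2$ with the indicator $\onebf_{\mcal E(t)}$. As in the paper, you then telescope with $\omega_t$ and close with the pairwise three-point inequality $\norm{x^{t,l_t}-\bar y}^2+\norm{x^{t,l_t}-x^{t,r_t}}^2\le\norm{x^{t,r_t}-\bar y}^2$ (the $l_t=0$ case via $\bar X(t,0)$), combined with Part 1 to bound each round by $\norm{x^{t,\bundleSize}-\bar y}^2-\norm{x^{t,0}-\bar y}^2$. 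The only differences are presentational: you derive the recursion before the distance bound and detour briefly through consecutive gaps, and you note that $x^{t,0}$ is itself the projection of $\bar y$ onto $\bar X(t,0)$, where the paper instead expands the square.
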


\noindent\emph{Proof deferred.} The detailed proof is collected in Subsection~\ref{subsec:papex-detailed-proofs}.

The following theorem specializes Proposition~\ref{prop:property_papex} to the ideal level $\tilde{l}=f^*$ and the weights $\omega_t$ and $\alpha_t$ used by $\papex$, yielding a sublinear convergence bound.
\begin{thm}\label{thm:papex_f_star}
    Set $\omega_t = (t+2)(t+3)/2$ and $\alpha_t = 4 / (t+3)$ in $\papex$.  Then, for the sequence $\bcbra{\hat{x}^t}$ generated by $\papex$ with level $\tilde{l}=f^*$ and $\gamma > \norm{\lambda^*}_1$, then the quantity  $\bar{L}_{\gamma}\brbra{t}$ defined in~\eqref{eq:L_phi_N} satisfies
    \begin{equation}\label{eq:ideal_descent}
        \phi(\hat{x}^{t+1};\gamma)-f^* \leq \frac{6}{(t+2)(t+3)}\brbra{\phi(\hat{x}^{1};\gamma)-f^*}+\frac{2\bar{L}_{\gamma}(t)}{(t+2)(t+3)}\dist^2\brbra{ \bar{y}, X^*}\ .
    \end{equation}
\end{thm}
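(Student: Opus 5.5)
The plan is to specialize Proposition~\ref{prop:property_papex} with $\tilde l = f^*$. We need to check its hypotheses, and then bound $\norm{x^{t+1}-\bar y}^2$ by $\dist^2(\bar y, X^*)$.

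\textbf{Step 1: the hypotheses.} The weight recursion holds because, with $\alpha_t = 4/(t+3)$, we have $1-\alpha_t/2 = (t+1)/(t+3)$. Hence
\[
\omega_t\brbra{1-\alpha_t/2} = \frac{(t+2)(t+3)}{2}\cdot\frac{t+1}{t+3} = \frac{(t+1)(t+2)}{2} = \omega_{t-1}.
\]
For $\gamma>\norm{\lambda^*}_1$, exact penalty theory \cite{han1979exact} gives $\min_{x\in X}\phi(x;\gamma) = f^*$ with minimizer set $X^*$. Consequently $\phi(\hat x^t;\gamma)\geq f^* = \tilde l$ for all $t$.

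For feasibility of the subproblems, take any $x^*\in X^*$. By~\eqref{eq:key_observation}, the penalty model at $x^*$ is at most $\phi(x^*;\gamma) = f^*$, so $x^*$ satisfies the level constraint in $X(t,i)$. We also need $x^*\in\bar X(t,0)$. By induction, $x^{t,0}$ is the projection of $\bar y$ onto the previous level set $X(t-1,\bundleSize)$, which contains $X^*$. The projection optimality condition then gives $\inner{x^*-x^{t,0}}{x^{t,0}-\bar y}\geq 0$. For $t=1$, we have $x^{1,0}=\bar y$ and $\bar X(1,0)$ is the whole space. So $X^*\subseteq X(t,i)$ for all $t,i$, and every subproblem is feasible.

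\textbf{Step 2: bounding the proximal distance.} Since $X^*\subseteq X(t,\bundleSize)$ and $x^{t+1} = x^{t,\bundleSize}$ is the projection of $\bar y$ onto that set, we obtain $\norm{x^{t+1}-\bar y}\leq \norm{\proj_{X^*}(\bar y)-\bar y} = \dist(\bar y, X^*)$.

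\textbf{Step 3: assembling the bound.} Proposition~\ref{prop:property_papex}(2) gives, for $t\geq 2$,
\[
\omega_t\brbra{\phi(\hat x^{t+1};\gamma)-f^*} \leq \omega_1\brbra{1-\alpha_1/2}\brbra{\phi(\hat x^1;\gamma)-f^*} + \bar L_\gamma(t)\dist^2(\bar y,X^*).
\]
Here $\omega_1(1-\alpha_1/2) = 6\cdot\tfrac12 = 3$. Dividing by $\omega_t = (t+2)(t+3)/2$ yields exactly $6/((t+2)(t+3))$ and $2\bar L_\gamma(t)/((t+2)(t+3))$, which is the claimed bound.

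The case $t=1$ is not covered by the proposition. There I would either repeat the one-step argument from the proposition's proof, or use monotonicity $\phi(\hat x^2;\gamma)\leq\phi(\hat x^1;\gamma)$. The latter works because $6/12 = 1/2 < 1$ fails to give the bound directly, so the one-step argument is likely needed. The key point is that the descent inequality underlying the proposition also holds at $t=1$ with $\omega_0 := \omega_1(1-\alpha_1/2)$.

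\textbf{Main obstacle.} I expect the main difficulty to be the inductive verification that $X^*\subseteq \bar X(t,0)$. This requires carefully tracking that $x^{t,0}$ is indeed the projection onto the previous level set intersected with the previous halfspace, and that this set contains $X^*$.
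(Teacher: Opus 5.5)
Your proposal is correct and follows the paper's proof. With $\tilde l=f^*$ and $\gamma>\norm{\lambda^*}_1$, you verify the hypotheses of Proposition~\ref{prop:property_papex}, use $X^*\subseteq X(t,\bundleSize)$ to get $\norm{x^{t+1}-\bar y}\le\dist(\bar y,X^*)$, and divide by $\omega_t$ using $\omega_1(1-\alpha_1/2)=3$. Two of your steps usefully fill in details the paper leaves implicit: the inductive check that $X^*\subseteq\bar X(t,0)$ via the projection optimality condition, and the treatment of $t=1$ by the proposition's argument with $N=1$ rather than by monotonicity alone (the sentence explaining this is muddled, but the conclusion is right).
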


\noindent\emph{Proof deferred.} The detailed proof is collected in Subsection~\ref{subsec:papex-detailed-proofs}.

Although $\bar{L}_{\gamma}\brbra{t}$, the average of $L_{\gamma,t}$ defined
in~\eqref{eq:L_phi_N}, is computable in practice, it remains unclear
how to bound this quantity in Proposition~\ref{prop:property_papex}
and Theorem~\ref{thm:papex_f_star}. Proposition~\ref{prop:upper_bound_Lip}
shows that $\bar{L}_{\gamma}\brbra{t}$ can be upper bounded by a weighted
average of empirical Lipschitz smoothness constants
$\tilde{L}_{\gamma,t}$ in~\eqref{eq:defn_L_tilde_phi}, where $\tilde{L}_{\gamma,t}$ denotes the minimum local empirical-curvature estimate
between $\bar{x}^{t,r}$ and
$\underline{x}^{t,l+1}$ for $r>l$ in the $t$-th outer iteration.
\begin{proposition}\label{prop:upper_bound_Lip}
    Let $\bcbra{\hat{x}^{t},x^{t,i}}$ be the iterates generated by Algorithm~\ref{alg:papex} with $\phi(\hat{x}^{t};\gamma)>\tilde{l}$.
    Assume that, for every outer iteration $t\geq1$, there exist indices
    $0\leq l_{f,t},l_{\mcal G,t}<\tilde r_{t}\leq \bundleSize$ that attain the minimum in
        {\small
            \begin{equation}\label{eq:defn_L_tilde_phi}
                \tilde{L}_{\gamma,t}=\min_{\substack{l_{f},l_{\mcal G},r:
                \\
                0\leq l_{f},l_{\mcal G}<r\leq\bundleSize}}
                \frac{\bsbra{\phi\brbra{\bar{x}^{t,r};\gamma}-\Big(\ell_{f}\brbra{\bar{x}^{t,r};\underline{x}^{t,l_{f}+1}}+\gamma\bsbra{\ell_{\mcal G}\brbra{\bar{x}^{t,r};\underline{x}^{t,l_{\mcal G}+1}}}_{+}\Big)-\frac{\alpha_{t}}{4}\Big(\phi\brbra{\hat{x}^{t};\gamma}-\tilde{l}\Big)}_+}{\frac{1}{2}\Big(\norm{\bar{x}^{t,r}-\underline{x}^{t,l_{f}+1}}^{2}+\norm{\bar{x}^{t,r}-\underline{x}^{t,l_{\mcal G}+1}}^{2}\Big)},
            \end{equation}}
    where the quotient in~\eqref{eq:defn_L_tilde_phi} is interpreted with the convention \(0/0=0\).
    Then $L_{\gamma,t}\leq2\tilde{L}_{\gamma,t}$.
    Let $\omega_{t}={(t+2)(t+3)/2}$ and $\alpha_{t}=4/\brbra{t+3}$,
    and suppose $\bundleSize\geq k_f + k_{\mcal G}$, then $\bar{L}_{\gamma}(N)\leq O(1)(L_f+\gamma L_{\mcal G})$.
\end{proposition}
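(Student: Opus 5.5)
Both claims are handled separately: first the deterministic per-iteration comparison $L_{\gamma,t}\le 2\tilde L_{\gamma,t}$, then the averaged bound via a pigeonhole argument on smooth pieces.

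\textbf{Step 1: $L_{\gamma,t}\le 2\tilde L_{\gamma,t}$.} Take the minimizing indices $l_f,l_{\mcal G}<\tilde r$ in \eqref{eq:defn_L_tilde_phi} and write $r=\tilde r$. Set $l:=\max\{l_f,l_{\mcal G}\}$, so the cuts at $\underline{x}^{t,l_f+1}$ and $\underline{x}^{t,l_{\mcal G}+1}$ both belong to $X(t,r)$ (indeed to $X(t,j)$ for $j\ge l+1$). Since $x^{t,r}\in X(t,r)$ and the model is affine in $x$ inside the max, convexity of the level set gives
\[
\ell_f(\bar x^{t,r};\underline{x}^{t,l_f+1})+\gamma[\ell_{\mcal G}(\bar x^{t,r};\underline{x}^{t,l_{\mcal G}+1})]_+\le (1-\alpha_t)\,\psi(\hat x^t)+\alpha_t\tilde l .
\]
Here $\psi(\hat x^t)\le\phi(\hat x^t;\gamma)$ by \eqref{eq:key_observation}, using $\bar x^{t,r}=(1-\alpha_t)\hat x^t+\alpha_t x^{t,r}$. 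Hence
\[
\phi(\hat x^{t+1};\gamma)-\tilde l-\bigl(1-\tfrac34\alpha_t\bigr)\bigl(\phi(\hat x^t;\gamma)-\tilde l\bigr)\le \phi(\bar x^{t,r};\gamma)-\text{model}(\bar x^{t,r})-\tfrac{\alpha_t}{4}\bigl(\phi(\hat x^t;\gamma)-\tilde l\bigr),
\]
which is exactly the numerator of $\tilde L_{\gamma,t}$.

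For the denominator, $\bar x^{t,r}-\underline{x}^{t,j+1}=\alpha_t(x^{t,r}-x^{t,j})$. The monotone projection property in Proposition~\ref{prop:property_papex}, together with $x^{t,r}\in X(t,j+1)\subseteq \bar X(t,j)$ (the supporting halfspace of the projection), yields $\|x^{t,r}-x^{t,j}\|^2\ge \|x^{t,r}-x^{t,l}\|^2$-type comparisons. Concretely, choose the pair $(r,l)$ with $l=\max\{l_f,l_{\mcal G}\}$ and use
\[
\tfrac12\bigl(\|\cdot\|_{l_f}^2+\|\cdot\|_{l_{\mcal G}}^2\bigr)\ge \tfrac12\alpha_t^2\|x^{t,r}-x^{t,l}\|^2\cdot\tfrac12\cdot 2 ,
\]
where the Pythagorean inequality $\|x^{t,r}-x^{t,j}\|^2\ge\|x^{t,r}-x^{t,l}\|^2+\|x^{t,l}-x^{t,j}\|^2$ holds for $j\le l$. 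This gives the factor $2$. The $[\cdot]_+$ and $0/0$ conventions are handled trivially.

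\textbf{Step 2: the averaged bound.} With $B\ge k_f+k_{\mcal G}$ inner points $\underline{x}^{t,1},\dots,\underline{x}^{t,B}$ and one more evaluation $\bar x^{t,r}$, pigeonhole over the pieces of $f$ and of $\mcal G$ separately gives some $r$ and indices $l_f,l_{\mcal G}<r$. For these, $\bar x^{t,r}$ shares an $f$-piece with $\underline{x}^{t,l_f+1}$ and a $\mcal G$-piece with $\underline{x}^{t,l_{\mcal G}+1}$. I would argue this with a counting of $k_f+k_{\mcal G}+1$ points, picking $r$ as the first index whose $f$-piece and $\mcal G$-piece both already appeared among earlier $\underline{x}$'s. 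This is the delicate combinatorial step: the $\bar x$ and $\underline{x}$ sequences are different, so I would mimic the argument from \cite{partOne}, defining the pieces for the pair structure of the points.

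Then \eqref{eq:pws_L_choice} gives
\[
\phi(\bar x^{t,r})-\ell_f-\gamma[\ell_{\mcal G}]_+\le \tfrac{L_f}{2}\|\cdot\|_{l_f}^2+\tfrac{\gamma L_{\mcal G}}{2}\|\cdot\|_{l_{\mcal G}}^2,
\]
using $[a]_+-[b]_+\le[a-b]_+$ and $\mcal G\le \ell_{\mcal G}+\tfrac{L_{\mcal G}}{2}\|\cdot\|^2$ on the piece. Hence $\tilde L_{\gamma,t}\le L_f+\gamma L_{\mcal G}$. Plugging into \eqref{eq:L_phi_N} with $\omega_t\alpha_t^2=8(t+2)/(t+3)\le 8$ gives $\bar L_\gamma(N)\le \sum 8L_{\gamma,t}\|\cdot\|^2/2\big/\sum\|\cdot\|^2\le O(1)(L_f+\gamma L_{\mcal G})$.

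The main obstacle is the pigeonhole/ordering argument in Step 2, combined with the denominator comparison in Step 1 when $l_f\ne l_{\mcal G}$.
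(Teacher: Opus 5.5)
Your numerator comparison in Step 1 is correct; it is the paper's steps (a)--(b). The denominator comparison, however, goes in the wrong direction, so the factor $2$ is never obtained. Write $D(j):=\frac{\alpha_t^2}{2}\|x^{t,\tilde r}-x^{t,j}\|^2$. Then $D(l)$ is the denominator of $L_{\gamma,t}(\tilde r,l)$, and $D(l_f)+D(l_{\mcal G})$ is the denominator of $\tilde L_{\gamma,t}$. The numerator of $L_{\gamma,t}(\tilde r,l)$ is at most $\tilde L_{\gamma,t}\bigl(D(l_f)+D(l_{\mcal G})\bigr)$, which gives $L_{\gamma,t}(\tilde r,l)\le\tilde L_{\gamma,t}\bigl(D(l_f)+D(l_{\mcal G})\bigr)/D(l)$. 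What you need is therefore the \emph{upper} bound $D(l_f)+D(l_{\mcal G})\le 2D(l)$. Your display asserts the lower bound $D(l_f)+D(l_{\mcal G})\ge D(l)$, which yields nothing. Worse, with your choice $l=\max\{l_f,l_{\mcal G}\}$ and the Pythagorean inequality you invoke, $x^{t,l}$ would be the \emph{closer} of the two points to $x^{t,\tilde r}$. Then $D(l)=\min\{D(l_f),D(l_{\mcal G})\}$, and the ratio $\bigl(D(l_f)+D(l_{\mcal G})\bigr)/D(l)\ge 2$ has no upper bound. That Pythagorean inequality also does not follow from the projection structure. The valid three-point inequality $\|x^{t,j}-\bar y\|^2+\|x^{t,j}-x\|^2\le\|x-\bar y\|^2$, for $x\in X(t,j)$, concerns the triangle with vertices $\bar y$, $x^{t,j}$, $x$. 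It says nothing about the angle at $x^{t,l}$ in the triangle $x^{t,j},x^{t,l},x^{t,r}$. For instance, take $\bar y=0$ and the nested sets $\{x_1\ge1\}\supseteq\{x_1\ge1,\,x_1+x_2\ge2\}\supseteq\{x_1\ge3,\,x_1+x_2\ge2\}$. The projections are $(1,0)$, $(1,1)$, $(3,0)$, and the later projection $(1,1)$ is farther from $(3,0)$ than the earlier one $(1,0)$.

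The repair needs no geometry. Take $l\in\{l_f,l_{\mcal G}\}$ attaining $\max\{\|x^{t,\tilde r}-x^{t,l_f}\|,\|x^{t,\tilde r}-x^{t,l_{\mcal G}}\|\}$. Then $D(l_f)+D(l_{\mcal G})\le 2D(l)$ trivially, and $L_{\gamma,t}\le L_{\gamma,t}(\tilde r,l)\le 2\tilde L_{\gamma,t}$; this is exactly the paper's argument. Otherwise your Step 2 matches the paper: pigeonhole, $[a]_+-[b]_+\le[a-b]_+$, and $\omega_t\alpha_t^2\le 8$. The combinatorial step you flag is simpler than you fear. Set $p_i:=(1-\alpha_t)\hat x^t+\alpha_t x^{t,i}$ for $0\le i\le\bundleSize$. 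Then $\underline x^{t,l+1}=p_l$ and $\bar x^{t,r}=p_r$, so the $\bar x$'s and $\underline x$'s form a single sequence. An index $r\ge1$ fails only if $p_r$ carries an $f$-piece or a $\mcal G$-piece not seen among $p_0,\dots,p_{r-1}$. This can happen for at most $(k_f-1)+(k_{\mcal G}-1)$ indices, so $\bundleSize\ge k_f+k_{\mcal G}$ guarantees a valid triple $(l_f,l_{\mcal G},r)$.
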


\noindent\emph{Proof deferred.} The detailed proof is collected in Subsection~\ref{subsec:papex-detailed-proofs}.

We make two remarks about Proposition~\ref{prop:property_papex},
Theorem~\ref{thm:papex_f_star}, and Proposition~\ref{prop:upper_bound_Lip}.

First, these results extend the corresponding APEX guarantees for convex
optimization without functional constraints in~\cite{partOne} to the
present exact-penalty setting. They show that the new cutting-plane model based
on the middle term of~\eqref{eq:key_observation} preserves the key properties of
the previous construction: the residual recursion, the ideal-level $O(1/t^2)$
descent bound, and the empirical-curvature bound.

Second, the definition of $\tilde L_{\gamma,t}$ in
\eqref{eq:defn_L_tilde_phi} uses two independent cut indices. For the same test
point $\bar{x}^{t,r}$, the objective term may be bounded by a cut at
$\underline{x}^{t,l_f+1}$, while the constraint term may be bounded by a
different cut at $\underline{x}^{t,l_{\mcal G}+1}$. This is because
the model keeps the two cutting-plane envelopes
$\max_{j\in[i]}\bcbra{\ell_f(x;\underline{x}^{t,j})}$ and
$\bsbra{\max_{j\in[i]}\bcbra{\ell_{\mcal G}(x;\underline{x}^{t,j})}}_+$ separate.
Consequently, the separated construction has additive worst-case dependence on
the pieces of $f$ and $\mcal G$. It is enough to require
$\bundleSize\geq \max\bcbra{k_f , k_{\mcal G}}$.
An alternative is to use a single combined model,
\[
    \max_{j\in[i]}
    \bcbra{\ell_f(x;\underline{x}^{t,j})
        +\gamma\bsbra{\ell_{\mcal G}(x;\underline{x}^{t,j})}_{+}}.
\]
This model can yield an analogous guarantee if the bundle is large enough to
capture the pieces of the composite penalty. It uses one indexed cut family and
can make the projection subproblem slightly simpler. However, it forces the
objective and constraint cuts to share the same index, so the model must track
the pieces of $\phi(\cdot;\gamma)=f(\cdot)+\gamma[\mcal G(\cdot)]_+$. To capture the same
$\pws$ structure, the required worst-case number of cuts grows to
$k_f(1+k_{\mcal G})\le k_f\brbra{1+\sum_{i\in \activeconstrSet}k_{g_i}}.$
Hence, the combined model trades a slightly simpler projection subproblem for a
larger worst-case cut requirement.

We begin with the idealized restart scheme for $\papex$ in
Algorithm~\ref{alg:ideal_papex}; this scheme highlights the restart mechanism
that the practical algorithm later implements with estimated parameters.
\begin{algorithm}
    \caption{Idealized $\papex$}
    \label{alg:ideal_papex}
    \begin{algorithmic}[1]
        \Require  penalty parameter $\gamma> \norm{\lambda^*}_1$, initial candidate solution $\bar{y}^1$, optimal value $f^*$, number of cuts $\bundleSize$, shrinking factor $\theta\in(0,1)$.
        \For{$s=1,\ldots,S$}
        \State Run $\hat{x}^{t_s + 1}_{s} \leftarrow \papex(\gamma,\bar{y}^{s},f^*,\bundleSize)$ until
        $
            \phi(\hat{x}^{t_s + 1}_{s};\gamma)-f^*
            \le \theta \brbra{\phi(\bar{y}^{s};\gamma)-f^*}
        $
        \State Set $\bar{y}^{s+1}=\hat{x}^{t_s + 1}_{s}$
        \EndFor
    \end{algorithmic}
\end{algorithm}
\begin{thm}\label{thm:ideal_papex}
    Suppose that the assumptions of Theorem~\ref{thm:papex_f_star} hold,
    $\phi(x;\gamma)=f(x)+\gamma\bsbra{\mcal G(x)}_+$ is
    $\mu(\gamma;\phi)$-QG and that $\gamma>\norm{\lambda^*}_1$. Let $\mu(\gamma;\phi)$ be the QG modulus of $\phi(x;\gamma)$.
    For the $s$-th stage of Algorithm~\ref{alg:ideal_papex}, let
    $\bar{L}_{\gamma,s}(t)$ denote the quantity $\bar{L}_{\gamma}(t)$ in
    Theorem~\ref{thm:papex_f_star}, let $\tilde{L}_{s,t}$ denote the
    quantity $\tilde{L}_{\gamma,t}$ in Proposition~\ref{prop:upper_bound_Lip}
    evaluated at the $t$-th outer iteration of the $s$-th stage with
    $\tilde{l}=f^*$, and let $\mcal E_s(t)$ denote the corresponding event
    $\mcal E(t)$.
    Then, for any target accuracy
    $\vep>0$, if
    Algorithm~\ref{alg:ideal_papex} is stopped at the first stage $S$ such that
    \[
        \phi\brbra{\bar y^{S+1};\gamma}-f^*
        =
        f(\bar{y}^{S+1})-f^*+\gamma\bsbra{\mcal G(\bar{y}^{S+1})}_+
        \leq \vep,
    \]
    then the total number of first-order oracle evaluations is at most
    \begin{equation}\label{eq:ideal_complexity}
        O(1)\bundleSize \max\bcbra{\sqrt{\frac{L_{\text{avg}}}{\mu(\gamma;\phi)}},1}\log_{\frac{1}{\theta}}\frac{\phi(\bar{y}^{1};\gamma)-f^*}{\vep},
    \end{equation}
    where $L_{\text{avg}}=\max_{s=1,\ldots,S}\bcbra{\bar{L}_{\gamma,s}(t_s-1)}$ and we adopt the convention $\bar{L}_{\gamma,s}(0)=0$.
    For each $s=1,\ldots,S$ with $t_s>1$, $\bar{L}_{\gamma,s}\brbra{t_s - 1}$ admits the following upper bound:
    \begin{equation}\label{eq:upper_L_phi_s_t_s}
        \bar{L}_{\gamma,s}\brbra{t_s-1}\leq \frac{24}{\theta}\cdot \frac{\sum_{t=1}^{t_s - 1}\onebf_{\mcal E_{s}(t)}(t+3)}{\sum_{t=1}^{t_s - 1}\brbra{\onebf_{\mcal E_{s}(t)}(t+3)/\tilde{L}_{s,t}}}\ .
    \end{equation}
    Here, we adopt the standard convention that $0/0=0$. Moreover, if $\bundleSize\geq k_f + k_{\mcal G}$, then $L_{\text{avg}}\leq O\brbra{1}\brbra{L_f + \gamma L_{\mcal G}}$.
\end{thm}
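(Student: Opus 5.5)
The plan is to analyze each stage $s$ of Algorithm~\ref{alg:ideal_papex} separately. Within a stage we apply Theorem~\ref{thm:papex_f_star} with $\bar y=\bar y^{s}$ and $\tilde l=f^*$; since $\hat x^1=\bar y^s$, it gives
\[
\phi(\hat x^{t+1}_s;\gamma)-f^*\le \frac{6}{(t+2)(t+3)}\brbra{\phi(\bar y^s;\gamma)-f^*}+\frac{2\bar L_{\gamma,s}(t)}{(t+2)(t+3)}\dist^2(\bar y^s,X^*).
\]
Before using it we check its hypotheses. Because $\gamma>\norm{\lambda^*}_1$, exact penalty theory gives $\min_X\phi(\cdot;\gamma)=f^*$. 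Hence $\phi(\hat x^t;\gamma)\ge f^*=\tilde l$ for all $t$, and every level set $X(t,i)$ contains $X^*$ by \eqref{eq:key_observation}. The halfspace $\bar X(t,0)$ also contains the projection target's optimal points; this is the standard prox-level argument, carried over from~\cite{partOne} as in Proposition~\ref{prop:property_papex}. So every subproblem is feasible. Next we use the QG property: $\dist^2(\bar y^s,X^*)\le \frac{2}{\mu(\gamma;\phi)}\brbra{\phi(\bar y^s;\gamma)-f^*}$. The right-hand side then becomes
\[
\frac{6+4\bar L_{\gamma,s}(t)/\mu(\gamma;\phi)}{(t+2)(t+3)}\brbra{\phi(\bar y^s;\gamma)-f^*}.
\]
By monotonicity in $t$ of the relevant quantities, the stopping test with factor $\theta$ is met once $(t+2)(t+3)\ge (6+4L_{\text{avg}}/\mu)/\theta$. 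Thus $t_s=O(1)\max\{\sqrt{L_{\text{avg}}/(\theta\mu)},\theta^{-1/2}\}$. Here the definition $t_s$ (first success) lets us evaluate $\bar L_{\gamma,s}$ at $t_s-1$: failure at $t_s-1$ gives $(t_s+1)(t_s+2)<(6+4\bar L_{\gamma,s}(t_s-1)/\mu)/\theta$, which yields the bound with $L_{\text{avg}}$ directly. Each outer iteration costs $B$ oracle calls. The number of stages needed to reach $\vep$ is $\lceil\log_{1/\theta}((\phi(\bar y^1;\gamma)-f^*)/\vep)\rceil$, since each stage contracts the gap by $\theta$. Multiplying the two counts gives \eqref{eq:ideal_complexity}, treating $\theta$ as a constant absorbed in $O(1)$.

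For \eqref{eq:upper_L_phi_s_t_s}, we bound each summand in the definition \eqref{eq:L_phi_N} of $\bar L_{\gamma,s}(t_s-1)$. We use $L_{\gamma,t}\le 2\tilde L_{s,t}$ from Proposition~\ref{prop:upper_bound_Lip}, together with $\omega_t\alpha_t^2=8(t+2)/(t+3)\le 8$.

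The key extra ingredient is to control the distances $\norm{x^{t,r_t}-x^{t,l_t}}^2$ on slow-descent iterations. On $\mcal E_s(t)$, the definition of $L_{\gamma,t}$ at the minimizing pair gives
\[
\alpha_t^2 L_{\gamma,t}\norm{x^{t,r_t}-x^{t,l_t}}^2/2 = \bigl[\phi(\hat x^{t+1})-f^*-(1-\tfrac34\alpha_t)(\phi(\hat x^t)-f^*)\bigr]_+ \le \alpha_t(\phi(\hat x^t)-f^*)/4.
\]
The last step uses monotonicity of $\phi(\hat x^t)$. Since $t\le t_s-1$ means the stage has not stopped, we also have $\phi(\hat x^t)-f^*\ge\theta(\phi(\bar y^s)-f^*)$.

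So the numerator is at most a multiple of $\sum_t \onebf_{\mcal E_s(t)}\alpha_t(\phi(\bar y^s)-f^*)$, and $\alpha_t\propto 1/(t+3)$. For the denominator, we lower bound $\norm{x^{t,r_t}-x^{t,l_t}}^2$ by $\frac{(t+3)\,\theta(\phi(\bar y^s)-f^*)}{c\,\tilde L_{s,t}}$. This comes from the slow-descent inequality $\phi(\hat x^{t+1})-f^*>(1-\alpha_t/2)(\phi(\hat x^t)-f^*)$: the bracket is at least $\frac{\alpha_t}{4}(\phi(\hat x^t)-f^*)$, combined with $L_{\gamma,t}\le2\tilde L_{s,t}$ and the $\theta$-bound. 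Taking the ratio, the factors $\phi(\bar y^s)-f^*$ cancel, leaving the weighted harmonic-type average $\frac{\sum\onebf(t+3)}{\sum\onebf(t+3)/\tilde L_{s,t}}$ with a constant $24/\theta$. Tracking these constants exactly, particularly how the $(t+3)$ weights arise from the $\alpha_t$ and $\omega_t$ factors, is the step I expect to be the main obstacle. I will match powers carefully, possibly dropping non-slow iterations from the denominator since this only increases the ratio.

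Finally, the claim $L_{\text{avg}}\le O(1)(L_f+\gamma L_{\mcal G})$ when $B\ge k_f+k_{\mcal G}$ follows directly from Proposition~\ref{prop:upper_bound_Lip}, applied stage by stage, with $N=t_s-1$.
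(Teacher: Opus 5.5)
Your plan follows the paper's proof essentially step for step. You apply Theorem~\ref{thm:papex_f_star} with $\hat x^1=\bar y^s$ and convert $\dist^2(\bar y^s,X^*)$ via QG. You bound $t_s$ through non-termination at iteration $t_s-1$; your earlier ``monotonicity in $t$'' remark is unneeded and unjustified, since $\bar L_{\gamma,s}(t)$ need not be monotone. You then count $\log_{1/\theta}$ stages, and for~\eqref{eq:upper_L_phi_s_t_s} you sandwich $L_{\gamma,t}\norm{x^{t,r_t}-x^{t,l_t}}^2$ on slow-descent iterations. However, the constant bookkeeping you flagged as the main obstacle contains two concrete slips.

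First, monotonicity of $\phi(\hat x^t;\gamma)$ only gives
\[
\Bigl[\phi(\hat x^{t+1};\gamma)-f^*-\bigl(1-\tfrac34\alpha_t\bigr)\bigl(\phi(\hat x^t;\gamma)-f^*\bigr)\Bigr]_+\le\tfrac{3\alpha_t}{4}\bigl(\phi(\hat x^t;\gamma)-f^*\bigr),
\]
not $\tfrac{\alpha_t}{4}$. At an iteration with no descent the bracket equals exactly $\tfrac{3\alpha_t}{4}(\cdot)$. The factor $\tfrac{\alpha_t}{4}$ is instead the strict lower bound supplied by $\mcal E_s(t)$. Your version would therefore rule out every slow-descent iteration, and if followed through it would give $8/\theta$, not the $24/\theta$ you assert.

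Second, the $(t+3)$ weights arise from dividing the bracket by $\alpha_t^2/2$, i.e.\ from $1/(2\alpha_t)=(t+3)/8$. Write $D_s:=\phi(\bar y^s;\gamma)-f^*$. The numerator is then a multiple of $\sum_t\onebf_{\mcal E_s(t)}(t+3)D_s$, not of $\sum_t\onebf_{\mcal E_s(t)}\alpha_tD_s$; the latter would produce weights $1/(t+3)$ that cannot match the denominator.

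The correct sandwich, for event iterations $1\le t\le t_s-1$, uses $\theta D_s<\phi(\hat x^t;\gamma)-f^*\le D_s$:
\[
\frac{\theta(t+3)}{8}D_s\le L_{\gamma,t}\norm{x^{t,r_t}-x^{t,l_t}}^2\le\frac{3(t+3)}{8}D_s .
\]
Use $\omega_t\alpha_t^2/2\le4$ in the numerator and $L_{\gamma,t}\le2\tilde L_{s,t}$ in the denominator, after dropping non-event terms there. This gives exactly $\brbra{4\cdot\tfrac38}/\brbra{\tfrac{\theta}{16}}=24/\theta$, as in the paper.
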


\noindent\emph{Proof deferred.} The detailed proof is collected in Subsection~\ref{subsec:papex-detailed-proofs}.

Three remarks on this complexity result are in order.

First, if $\bundleSize\geq k_f + k_{\mcal G}$, the complexity~\eqref{eq:ideal_complexity} in Theorem~\ref{thm:ideal_papex} can be written as
$$O(1)\bundleSize \max\bcbra{\sqrt{\frac{L_f + \gamma L_{\mcal G}}{\mu(\gamma;\phi)}},1}\log_{\frac{1}{\theta}}\frac{\phi(\bar{y}^{1};\gamma)-f^*}{\vep},$$
which is derived under the idealized setting in which an upper bound on the norm of an optimal dual variable and the value of $f^*$ are known a priori. In practice, an upper bound on the dual norm can be obtained from a strictly feasible point $\bar{x}$ via $\bar{\gamma}=-(f(\bar{x})-f^*)/\mcal G(\bar{x})$. This technique is commonly used in previous work, e.g.,~\cite{nesterov2018lectures,lin2025adaptiveparameterfreeprojectionfreerestarting,NEURIPS2024_8d8e060d}. To eliminate the need to know $f^*$, one may further apply $\rapex$ from our previous work~\cite{partOne} for $\phi(x;\bar{\gamma})$, which yields condition-number dependence of order $\sqrt{(L_f + \gamma L_{\mcal G})/\mu(\bar{\gamma};\phi)}$. However, this approach has two potential drawbacks: the upper bound obtained from a strictly feasible point may be overly conservative when $\mcal G(\bar{x})\to 0^{-}$, which can make the dependence substantially worse than $\sqrt{(L_f + \norm{\lambda^*}_1 L_{\mcal G})/\mu(\norm{\lambda^*}_1;\phi)}$. Furthermore, obtaining a strictly feasible point itself may be nontrivial in practice.

Second, \eqref{eq:upper_L_phi_s_t_s} controls
$\bar{L}_{\gamma,s}\brbra{t_s-1}$ by a ratio that, when the event set is nonempty,
is $(24/\theta)$ times a weighted harmonic mean of the local empirical
Lipschitz constants $\tilde{L}_{s,t}$ over the indices for which $\mcal E_s(t)$
occurs, with weights $t+3$. Since these weights increase with $t$, more recent
values of $\tilde{L}_{s,t}$ within the stage have greater influence on the
bound. This harmonic-mean form dampens the effect of a small number of large
local constants. Moreover, the corresponding counting result,
Proposition~3.3 of~\cite{partOne}, implies that, for long stages, the fraction
of indices satisfying $\mcal E_s(t)$ approaches $\sqrt{\theta}$. In particular,
when $\theta=1/2$, about $70.7\%$ of the indices satisfy $\mcal E_s(t)$, so the
harmonic mean is taken over a majority of the stage.

Third, the complexity bound~\eqref{eq:ideal_complexity} requires the precondition $\bundleSize\geq k_f + k_{\mcal G}$. However, the practical dependence of our algorithm on the number of cuts is often much milder than this worst-case estimate. As emphasized in our earlier work~\cite{partOne}, the algorithm depends on the number of pieces encountered along its trajectory rather than on the global number of pieces. Moreover, although the worst-case number of cuts may increase with the number of constraints, each iteration only uses the gradient $\mcal G^\prime(\underline{x}^{t,i})$, so redundant constraints mainly make the worst-case bound conservative. The corresponding empirical evidence is presented in Section~\ref{sec:numerical_study}.

\subsection{Detailed proofs\label{subsec:papex-detailed-proofs}}

The statements above isolate the descent recursion and the idealized restart guarantee for \papex. We now collect the full proofs in the same order as the corresponding results.

\begin{proof}[\underline{Proof of Proposition~\ref{prop:property_papex}}]
    We prove the two claims separately.

    Part 1 (Monotonicity). For each $t\geq 1$ and $1\leq i<\bundleSize$,
    $x^{t,i}\in\argmin_{x\in X(t,i)}\norm{x-\bar{y}}^{2}$, which implies
    \begin{equation}
        \norm{x^{t,i}-\bar{y}}^{2}+\norm{x^{t,i}-x}^{2}\leq\norm{x-\bar{y}}^{2}\ ,\ \forall x\in X(t,i)\ .
    \end{equation}
    Because $x^{t,i+1}\in X(t,i+1)\subseteq X(t,i)$,
    substituting $x=x^{t,i+1}$ gives
    \begin{equation}
        \norm{x^{t,i}-\bar{y}}^{2}+\norm{x^{t,i}-x^{t,i+1}}^{2}\leq\norm{x^{t,i+1}-\bar{y}}^{2}\ ,
    \end{equation}
    which implies $\norm{x^{t,i}-\bar{y}}\leq\norm{x^{t,i+1}-\bar{y}}$.

    For $i=0$, by expanding the square,
    \begin{equation}
        \begin{split}\norm{x^{t,1}-\bar{y}}^{2} & =\norm{x^{t,1}-x^{t,0}}^{2}+2\inner{x^{t,1}-x^{t,0}}{x^{t,0}-\bar{y}}+\norm{x^{t,0}-\bar{y}}^{2}\end{split}
        \ .
    \end{equation}
    The linear constraint in $\bar{X}(t,0)$ ensures the cross term is nonnegative,
    so $\norm{x^{t,0}-\bar{y}}^{2}\leq\norm{x^{t,1}-\bar{y}}^{2}$. Hence
    $\norm{x^{t,i}-\bar{y}}\leq\norm{x^{t,i+1}-\bar{y}}$, which completes
    the proof of monotonicity.

    Part 2 (Convergence bound).  Let $0\leq l_t<r_t\leq\bundleSize$ be an index pair attaining the minimum in the definition of  $L_{\gamma,t}$.  If $l_t\geq1$, the
    projection optimality of $x^{t,l_t}$ over $X(t,l_t)$, together with
    $x^{t,r_t}\in X(t,r_t)\subseteq X(t,l_t)$, gives
    \[
        \norm{x^{t,l_t}-\bar y}^2+\norm{x^{t,l_t}-x^{t,r_t}}^2
        \le
        \norm{x^{t,r_t}-\bar y}^2.
    \]
    If $l_t=0$, the same inequality follows by expanding the square and using
    $x^{t,r_t}\in \bar{X}(t,0)$.  Therefore, in all cases, the monotonicity of Part 1 implies
    \[
        \norm{x^{t,0}-\bar y}^2+\norm{x^{t,l_t}-x^{t,r_t}}^2
        \le
        \norm{x^{t,\bundleSize}-\bar y}^2.
    \]
    Summing this inequality over $t=1,\dots,N$ and using
    $x^{1,0}=\bar y$ and $x^{t+1,0}=x^{t,\bundleSize}$ gives
    \[
        \sum_{t=1}^N \norm{x^{t,l_t}-x^{t,r_t}}^2
        \le
        \norm{x^{N,\bundleSize}-\bar y}^2.
    \]

    Next, by the definition of $L_{\gamma,t}$,
    \[
        \phi(\hat x^{t+1};\gamma)-\tilde l
        \le
        \Bigl(1-\frac{3\alpha_t}{4}\Bigr)\bigl(\phi(\hat x^t;\gamma)-\tilde l\bigr)
        +\frac{L_{\gamma,t}\alpha_t^2}{2}\norm{x^{t,r_t}-x^{t,l_t}}^2.
    \]
    Since $\phi(\hat x^t;\gamma)-\tilde l\geq0$, the definition of
    $\mcal E(t)$ yields the uniform recursion
    \[
        \phi(\hat x^{t+1};\gamma)-\tilde l
        \le
        \Bigl(1-\frac{\alpha_t}{2}\Bigr)\bigl(\phi(\hat x^t;\gamma)-\tilde l\bigr)
        +\onebf_{\mcal E(t)}
        \frac{L_{\gamma,t}\alpha_t^2}{2}\norm{x^{t,r_t}-x^{t,l_t}}^2.
    \]
    Multiplying by $\omega_t$ gives
    \[
        \omega_t\bigl(\phi(\hat x^{t+1};\gamma)-\tilde l\bigr)
        \le
        \omega_t\Bigl(1-\frac{\alpha_t}{2}\Bigr)\bigl(\phi(\hat x^t;\gamma)-\tilde l\bigr)
        +\onebf_{\mcal E(t)}
        \frac{\omega_tL_{\gamma,t}\alpha_t^2}{2}\norm{x^{t,r_t}-x^{t,l_t}}^2.
    \]
    Summing from $t=1$ to $N$ and using
    $\omega_t(1-\alpha_t/2)=\omega_{t-1}$ to telescope the terms with indices
    $2,\ldots,N$ gives
    \[
        \omega_N\bigl(\phi(\hat x^{N+1};\gamma)-\tilde l\bigr)
        -\omega_1\Bigl(1-\frac{\alpha_1}{2}\Bigr)\bigl(\phi(\hat x^1;\gamma)-\tilde l\bigr)
        \le
        \sum_{t=1}^N
        \onebf_{\mcal E(t)}
        \frac{\omega_tL_{\gamma,t}\alpha_t^2}{2}\norm{x^{t,r_t}-x^{t,l_t}}^2.
    \]
    By the definition of $\bar{L}_{\gamma}\brbra N$ and by $x^{N+1}=x^{N,\bundleSize}$ in Algorithm~\ref{alg:papex}, we have,
    \[
        \omega_N\bigl(\phi(\hat x^{N+1};\gamma)-\tilde l\bigr)
        -\omega_1\Bigl(1-\frac{\alpha_1}{2}\Bigr)\bigl(\phi(\hat x^1;\gamma)-\tilde l\bigr)
        \le
        \bar{L}_{\gamma}\brbra N\sum_{t=1}^N \norm{x^{t,l_t}-x^{t,r_t}}^2
        \le
        \bar{L}_{\gamma}\brbra N\norm{x^{N+1}-\bar y}^2.
    \]
    Replacing $N$ by $t$ completes the proof.
    \ifdefined\isarxiv
\end{proof}
\else
\qedsymbol\end{proof}
\fi

\begin{proof}[\underline{Proof of Theorem~\ref{thm:papex_f_star}}]
    Since $\gamma > \norm{\lambda^*}_1$, we have $\min_{x\in X}\phi\brbra{x;\gamma}=f^*$. Therefore, every subproblem in Algorithm~\ref{alg:One-step_plus} is feasible and feasible region contains $X^*$. Because $x^{t+1}$ is the minimum distance point to $\bar{y}$, we have $\norm{x^{t+1} - \bar{y}}\leq \dist\brbra{\bar{y}, X^*}$. Moreover, the choices of $\omega_t$ and $\alpha_t$ satisfy $\omega_t\brbra{1-\frac{\alpha_t}{2}} = \omega_{t-1}$ and $\omega_1(1-\alpha_1/2)=3$. The result then follows from the second part of Proposition~\ref{prop:property_papex} after dividing by $\omega_t=(t+2)(t+3)/2$.
\end{proof}

\begin{proof}[\underline{Proof of Proposition~\ref{prop:upper_bound_Lip}}]
    It follows from $\phi\brbra{\hat{x}^{t+1};\gamma}\leq\phi\brbra{\bar{x}^{t,\tilde r_{t}};\gamma}$
    and the definition of $\bar{x}^{t,\tilde r_{t}}=\brbra{1-\alpha_{t}}\hat{x}^{t}+\alpha_{t}x^{t,\tilde r_{t}}$
    that
    \begin{equation*}
        \begin{split}                    & \phi\brbra{\hat{x}^{t+1};\gamma}\leq\phi\brbra{\bar{x}^{t,\tilde r_{t}};\gamma}                                                                                                                                                                                                                                              \\
                                 & \aleq\ell_{f}\brbra{\bar{x}^{t,\tilde r_{t}};\underline{x}^{t,l_{f,t}+1}}+\gamma\bsbra{\ell_{\mcal G}\brbra{\bar{x}^{t,\tilde r_{t}};\underline{x}^{t,l_{\mcal G,t}+1}}}_{+}+\frac{\alpha_{t}}{4}\brbra{\phi\brbra{\hat{x}^{t};\gamma}-\tilde{l}}                                                                            \\
                                 & \ \ +\frac{\tilde{L}_{\gamma,t}}{2}\brbra{\norm{\bar{x}^{t,\tilde r_{t}}-\underline{x}^{t,l_{f,t}+1}}^{2}+\norm{\bar{x}^{t,\tilde r_{t}}-\underline{x}^{t,l_{\mcal G,t}+1}}^{2}}                                                                                                                                             \\
                                 & \bleq\brbra{1-\alpha_{t}}\phi\brbra{\hat{x}^{t};\gamma}+\alpha_{t}\tilde{l}+\frac{\alpha_{t}}{4}\brbra{\phi\brbra{\hat{x}^{t};\gamma}-\tilde{l}}+\frac{\tilde{L}_{\gamma,t}}{2}\brbra{\norm{\bar{x}^{t,\tilde r_{t}}-\underline{x}^{t,l_{f,t}+1}}^{2}+\norm{\bar{x}^{t,\tilde r_{t}}-\underline{x}^{t,l_{\mcal G,t}+1}}^{2}}
        \end{split},
    \end{equation*}
    where $(a)$ holds by the definition of $\tilde{L}_{\gamma,t}$ in~\eqref{eq:defn_L_tilde_phi}, $(b)$ follows from the definition of $\bar{x}^{t,\tilde r_{t}}$, the convexity of $f$, $\mcal G$ and $x^{t,\tilde r_{t}}\in X\brbra{t,\tilde r_{t}}$.
    Subtracting $\tilde{l}$ on both sides and using the definition of $\bar{x}^{t,\tilde r_{t}}$, $\underline{x}^{t,l_{f,t}+1}, \underline{x}^{t,l_{\mcal G,t}+1}$ gives
    \begin{equation*}
        \begin{aligned}
            \phi\brbra{\hat{x}^{t+1};\gamma}-\tilde{l} & \leq\brbra{1-\frac{3}{4}\alpha_{t}}\brbra{\phi\brbra{\hat{x}^{t};\gamma}-\tilde{l}}+\frac{\tilde{L}_{\gamma,t}\alpha_{t}^{2}}{2}\brbra{\norm{x^{t,\tilde r_{t}}-x^{t,l_{f,t}}}^{2}+\norm{x^{t,\tilde r_{t}}-x^{t,l_{\mcal G,t}}}^{2}} \\&\leq\brbra{1-\frac{3}{4}\alpha_{t}}\brbra{\phi\brbra{\hat{x}^{t};\gamma}-\tilde{l}}+\tilde{L}_{\gamma,t}\alpha_{t}^{2}\brbra{\max\bcbra{\norm{x^{t,\tilde r_{t}}-x^{t,l_{f,t}}}^{2},\norm{x^{t,\tilde r_{t}}-x^{t,l_{\mcal G,t}}}^{2}}}\ ,
        \end{aligned}
    \end{equation*}
    which implies $L_{\gamma,t}\leq2\tilde{L}_{\gamma,t}$. Next, we consider
    the upper bound of $\bar{L}_{\gamma}\brbra N$. By the definition of
    $\bar{L}_{\gamma}\brbra N$ and $\onebf_{\mcal E\brbra t}$, the upper
    bound of $\bar{L}_{\gamma}\brbra N$ is given as follows:
    \begin{equation*}
        \begin{split}\bar{L}_{\gamma}\brbra N & =\frac{\sum_{t=1}^{N}\omega_{t}\alpha_{t}^{2}L_{\gamma,t}\onebf_{\mcal E\brbra t}\norm{x^{t,r_{t}}-x^{t,l_{t}}}^{2}/2}{\sum_{t=1}^{N}\norm{x^{t,r_{t}}-x^{t,l_{t}}}^{2}}\leq\frac{8\sum_{t=1}^{N}\tilde{L}_{\gamma,t}\onebf_{\mcal E\brbra t}\norm{x^{t,r_{t}}-x^{t,l_{t}}}^{2}}{\sum_{t=1}^{N}\norm{x^{t,r_{t}}-x^{t,l_{t}}}^{2}}\ ,
        \end{split}
    \end{equation*}
    where the inequality holds by $\omega_t \alpha_t^2 \leq 8$ and $L_{\gamma,t} \leq 2\tilde{L}_{\gamma,t}$.

    If $\bundleSize\geq k_f + k_{\mcal G}$, then by
    the pigeonhole principle we can choose indices $l_{f,t}$, $l_{\mcal G,t}$, and
    $\tilde r_t$ such that $\bar{x}^{t,\tilde r_t}$ and
    $\underline{x}^{t,l_{f,t}+1}$ lie in the same smooth piece of $f(x)$, while
    $\bar{x}^{t,\tilde r_t}$ and $\underline{x}^{t,l_{\mcal G,t}+1}$ lie in the
    same smooth piece of $\bsbra{\mcal G(x)}_+$. Therefore, the numerator in the definition of
    $\tilde{L}_{\gamma,t}$ can be upper bounded by
    \begin{equation}\label{eq:cut_large_enough_imply_L_upper}
        \begin{split}                    & \phi\brbra{\bar{x}^{t,\tilde r_{t}};\gamma}-\brbra{\ell_{f}\brbra{\bar{x}^{t,\tilde r_{t}};\underline{x}^{t,l_{f,t}+1}}+\gamma\bsbra{\ell_{\mcal G}\brbra{\bar{x}^{t,\tilde r_{t}};\underline{x}^{t,l_{\mcal G,t}+1}}}_{+}}-\frac{\alpha_{t}}{4}\brbra{\phi\brbra{\hat{x}^{t};\gamma}-\tilde{l}} \\
                                 & \leq \frac{L_{f}}{2}\norm{\bar{x}^{t,\tilde r_{t}}-\underline{x}^{t,l_{f,t}+1}}^{2}+ \frac{\gamma L_{\mcal G}}{2}\norm{\bar{x}^{t,\tilde r_{t}}-\underline{x}^{t,l_{\mcal G,t}+1}}^{2}                                                                                                           \\
                                 & \leq \frac{L_{f}+\gamma L_{{\cal G}}}{2}\brbra{\norm{\bar{x}^{t,\tilde r_{t}}-\underline{x}^{t,l_{f,t}+1}}^{2}+\norm{\bar{x}^{t,\tilde r_{t}}-\underline{x}^{t,l_{\mcal G,t}+1}}^{2}}
        \end{split}\ ,
    \end{equation}
    which implies
    $\tilde{L}_{\gamma,t}\leq L_f + \gamma L_{\mcal G}$. This completes the proof.
    \ifdefined\isarxiv
\end{proof}
\else
\qedsymbol\end{proof}
\fi

\begin{proof}[\underline{Proof of Theorem~\ref{thm:ideal_papex}}]
    If $\phi\brbra{\bar y^s;\gamma}=f^*$ at the beginning of a stage, the
    claims for that stage are immediate. Otherwise, applying
    Theorem~\ref{thm:papex_f_star} to the $s$-th call to $\papex$ with
    $\hat{x}_s^1=\bar y^s$ and $\tilde l=f^*$ gives
    \[
        \phi\brbra{\hat{x}_s^{t+1};\gamma}-f^*
        \leq
        \frac{6}{(t+2)(t+3)}\brbra{\phi\brbra{\bar y^s;\gamma}-f^*}
        +\frac{2\bar{L}_{\gamma,s}(t)}{(t+2)(t+3)}
        \dist^2\brbra{\bar y^s,X^*}.
    \]
    Since $\phi(\cdot;\gamma)$ is $\mu(\gamma;\phi)$-QG and
    $\gamma>\norm{\lambda^*}_1$ gives $\min_{x\in X}\phi(x;\gamma)=f^*$,
    \[
        \dist^2\brbra{\bar y^s,X^*}
        \leq
        \frac{2}{\mu(\gamma;\phi)}\brbra{\phi\brbra{\bar y^s;\gamma}-f^*},
    \]
    and therefore
    \[
        \phi\brbra{\hat{x}_s^{t+1};\gamma}-f^*
        \leq
        \frac{6+4\bar{L}_{\gamma,s}(t)/\mu(\gamma;\phi)}{(t+2)(t+3)}
        \brbra{\phi\brbra{\bar y^s;\gamma}-f^*}.
    \]
    Define $t_s+1$ is the first inner iteration satisfying the stage-$s$
    stopping rule. The stage has not stopped at iteration $t_s$ whenever
    $t_s>1$. Hence,
    \[
        \theta
        <
        \frac{\phi\brbra{\hat{x}_s^{t_s};\gamma}-f^*}
        {\phi\brbra{\bar y^s;\gamma}-f^*}
        \leq
        \frac{6+4\bar{L}_{\gamma,s}(t_s-1)/\mu(\gamma;\phi)}
        {(t_s+1)(t_s+2)} .
    \]
    The geometric decrease follows directly from the stopping rule:
    \[
        \phi\brbra{\bar y^{s+1};\gamma}-f^*
        =
        \phi\brbra{\hat{x}_s^{t_s+1};\gamma}-f^*
        \leq
        \theta\brbra{\phi\brbra{\bar y^s;\gamma}-f^*}.
    \]
    Iterating this inequality over $s=1,\ldots,S$ gives the claimed bound on
    $\phi(\bar y^{S+1};\gamma)-f^*$ and, at the first stage satisfying the
    target accuracy, $S\leq \log_{1/\theta}
        \brbra{\brbra{\phi(\bar y^1;\gamma)-f^*}/\vep}$. The stopping-index
    inequality implies $t_s\leq O(1)\sqrt{L_{\text{avg}}/\mu(\gamma;\phi)}$.
    Since one call to $\onestepplus$ uses $\bundleSize$ first-order oracle evaluations, the
    stated first-order oracle evaluation complexity follows.

    It remains to prove~\eqref{eq:upper_L_phi_s_t_s}. If no event
    $\mcal E_s(t)$ occurs for $1\leq t\leq t_s-1$, then
    $\bar{L}_{\gamma,s}(t_s-1)=0$ by definition and the convention $0/0=0$
    gives the result. Otherwise, let $L_{\gamma,s,t}$ be the stage-$s$
    counterpart of $L_{\gamma,t}$ in~\eqref{eq:L_phi_N}. The definition of
    $\bar L_{\gamma,s}$ and the identity
    $\omega_t\alpha_t^2/2=4(t+2)/(t+3)\leq4$ give
    \[
        \bar{L}_{\gamma,s}\brbra{t_s-1}
        \leq
        \frac{4\sum_{t=1}^{t_s-1}\onebf_{\mcal E_s(t)}
        L_{\gamma,s,t}\norm{x_s^{t,r_t}-x_s^{t,l_t}}^2}
        {\sum_{t=1}^{t_s-1}\norm{x_s^{t,r_t}-x_s^{t,l_t}}^2}.
    \]
    For every $t$ such that $\mcal E_s(t)$ occurs, the definition of
    $L_{\gamma,s,t}$ gives
    \[
        \frac{\alpha_t^2}{2}L_{\gamma,s,t}\norm{x_s^{t,r_t}-x_s^{t,l_t}}^2
        =
        \phi\brbra{\hat{x}_s^{t+1};\gamma}-f^*
        -\brbra{1-\frac{3\alpha_t}{4}}
        \brbra{\phi\brbra{\hat{x}_s^{t};\gamma}-f^*}.
    \]
    Using the event definition and the nonincreasing update of
    $\phi(\hat{x}_s^t;\gamma)$, we obtain
    \[
        \frac{\alpha_t}{4}\brbra{\phi\brbra{\hat{x}_s^{t};\gamma}-f^*}
        \leq
        \frac{\alpha_t^2}{2}L_{\gamma,s,t}\norm{x_s^{t,r_t}-x_s^{t,l_t}}^2
        \leq
        \frac{3\alpha_t}{4}\brbra{\phi\brbra{\hat{x}_s^{t};\gamma}-f^*}.
    \]
    For $1\leq t\leq t_s-1$, the first-stopping definition and monotonicity imply
    \[
        \theta\brbra{\phi\brbra{\bar y^s;\gamma}-f^*}
        <
        \phi\brbra{\hat{x}_s^{t};\gamma}-f^*
        \leq
        \phi\brbra{\bar y^s;\gamma}-f^*.
    \]
    Since $\alpha_t=4/(t+3)$, for event iterations with
    $1\leq t\leq t_s-1$,
    \[
        \frac{\theta(t+3)}{8}\brbra{\phi\brbra{\bar y^s;\gamma}-f^*}
        \leq
        L_{\gamma,s,t}\norm{x_s^{t,r_t}-x_s^{t,l_t}}^2
        \leq
        \frac{3(t+3)}{8}\brbra{\phi\brbra{\bar y^s;\gamma}-f^*}.
    \]
    Substituting these two bounds into the preceding upper bound on
    $\bar{L}_{\gamma,s}(t_s-1)$ and using
    $L_{\gamma,s,t}\leq2\tilde L_{s,t}$ from
    Proposition~\ref{prop:upper_bound_Lip} proves
    \[
        \bar{L}_{\gamma,s}\brbra{t_s-1}
        \leq
        \frac{24}{\theta}\cdot
        \frac{\sum_{t=1}^{t_s-1}\onebf_{\mcal E_s(t)}(t+3)}
        {\sum_{t=1}^{t_s-1}\brbra{\onebf_{\mcal E_s(t)}(t+3)/\tilde L_{s,t}}},
    \]
    which is~\eqref{eq:upper_L_phi_s_t_s}. Under the additional condition
    $\bundleSize\geq k_f + k_{\mcal G}$, the stagewise analogue of
    the argument leading to~\eqref{eq:cut_large_enough_imply_L_upper} gives
    $\tilde{L}_{s,t}\leq O(1)(L_f+\gamma L_{\mcal G})$ for every relevant $s$ and $t$.
    Taking the maximum over stages yields
    $L_{\text{avg}}\leq O(1)(L_f+\gamma L_{\mcal G})$, completing the proof.
\end{proof}
\section{\label{subsec:CWcer}Penalty normalized Wolfe certificate}

The idealized guarantee for Algorithm~\ref{alg:ideal_papex} in
Section~\ref{sec:penalty_apex} assumes both a penalty parameter satisfying
$\gamma> \norm{\lambda^*}_1$ and prior knowledge of the optimal value
$f^*$. These quantities are generally unavailable before solving the problem.
To remove these dependencies, we extend the normalized Wolfe certificate
($\Wcer$) from~\cite{partOne} to the function-constrained exact-penalty setting;
the resulting certificate (Definition~\ref{def:CWcer}) is the key tool for the parameter-free algorithm
developed below.

This section proceeds as follows. We first define the certificate in
Definition~\ref{def:CWcer}. Proposition~\ref{prop:QG_for_lower_bound} then
shows that, under convexity and QG assumptions, a $\PWcer$ yields an upper
bound on the penalty optimality gap. Proposition~\ref{prop:APWG_W_cer_gen_complexity}
gives the iteration bound for generating such a certificate, and
Proposition~\ref{prop:CW_certificate_transfer} shows how to transfer a
certificate to a larger penalty parameter.

\begin{defn}
    [Penalty normalized Wolfe certificate, \PWcer]\label{def:CWcer}
    Let $\iota>0$, $\nu\geq0$, $\gamma > 0$. We say that a point $\bar{y}$
    is an $\brbra{\iota,\nu,\gamma}$-Penalty normalized Wolfe stationary point
    with respect to~\eqref{eq:initial_FP} if there exists a finite  evaluation
    point set $\mcal P=\bcbra{z^{0}=\bar{y},z^{1},\ldots,z^{q}}\subseteq\mcal B\brbra{\bar{y},\iota}\cap X$
    such that
    \begin{equation}
        \mcal V_{\mcal P,\gamma}^{+}\brbra{\iota;\bar{y}}=\frac{1}{\iota}\max_{x\in\mcal B\brbra{\bar{y},\iota}\cap X}\brbra{\psi_{\mcal P}^{+}\brbra{\bar{y};\gamma}-\psi_{\mcal P}^{+}\brbra{x;\gamma}}\leq\nu\ \text{and}\ \bsbra{\mcal G\brbra{\bar{y}}}_+\leq \frac{2\iota\nu}{ \gamma}\ ,\label{eq:V+_defn}
    \end{equation}
    where $\ensuremath{\psi}_{\mcal P}^{+}\brbra{x;\gamma}=\max_{z\in\mcal P}\bcbra{\ell_{f}\brbra{x;z}}+\gamma\bsbra{\max_{z\in\mcal P}\bcbra{\ell_{\mcal G}\brbra{x;z}}}_{+}$.  In this case, we call the evaluation point set $\mcal P$ an $\brbra{\iota,\nu,\gamma}$-$\PWcer$
    for the point $\bar{y}$ and problem~\eqref{eq:initial_FP}.
\end{defn}
The $\PWcer$ is a variant of $\Wcer$ tailored to function-constrained optimization.
It uses the separated exact-penalty model $\psi_{\mcal P}^{+}(x;\gamma)$ from
Definition~\ref{def:CWcer}, rather than the standard combined cutting-plane
model in the first term of~\eqref{eq:key_observation}, and it also explicitly
controls the constraint violation at the current point in the second term of~\eqref{eq:V+_defn}. These two features make
the certificate transferable to larger penalty parameters, as formalized in
Proposition~\ref{prop:CW_certificate_transfer}.


%

Proposition~\ref{prop:QG_for_lower_bound} shows that, under the QG condition,
a $\PWcer$ yields an explicit upper bound on the optimality gap of the penalty
function $\phi\brbra{\bar{y};\gamma}$.
\begin{proposition}
    \label{prop:QG_for_lower_bound}
    Suppose $f(x)$ and $\mcal G(x)$ are convex, and that the penalty function $\phi(x;\gamma):=f(x)+\gamma [\mcal G(x)]_{+}$ satisfies the $\mu$-QG condition over $X$. If there exists a  point set $\mcal P$ forming an $\brbra{\iota,\nu,\gamma}$-$\PWcer$ for the point $\bar{y}$ in problem~\eqref{eq:initial_FP}, then
    $\phi\brbra{\bar{y};\gamma}-f^*\leq\phi\brbra{\bar{y};\gamma}-\min_{x\in X}\phi\brbra{x;\gamma}\leq\max\bcbra{\iota\nu,{2\nu^{2}}/{\mu}}.$
\end{proposition}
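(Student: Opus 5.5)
The plan is the standard normalized-Wolfe argument, applied to the separated penalty model $\psi_{\mcal P}^{+}(\cdot;\gamma)$. The second condition in~\eqref{eq:V+_defn}, the bound on $[\mcal G(\bar y)]_+$, is not needed here.

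\textbf{First inequality.} For every feasible $x\in X$ we have $[\mcal G(x)]_+=0$, so $\phi(x;\gamma)=f(x)$. Hence $\min_{x\in X}\phi(x;\gamma)\le f^*$, which gives $\phi(\bar y;\gamma)-f^*\le\phi(\bar y;\gamma)-\min_{x\in X}\phi(x;\gamma)$.

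\textbf{Properties of the model.} Write $\phi^*_\gamma:=\min_{x\in X}\phi(x;\gamma)$ and let $X^*_\gamma$ be its minimizer set. By Definition~\ref{def:QG}, this is the set with respect to which $\phi(\cdot;\gamma)$ is $\mu$-QG. I will use three facts about $\psi_{\mcal P}^{+}$.
\begin{enumerate}
    \item It is convex. It is a max of affine functions plus $\gamma\ge0$ times the positive part of a max of affine functions, and the positive part of a convex function is convex.
    \item It is a lower model: $\psi_{\mcal P}^{+}(x;\gamma)\le\phi(x;\gamma)$ on $X$. This is the right inequality in~\eqref{eq:key_observation}, using convexity of $f$ and $\mcal G$ and monotonicity of $[\cdot]_+$.
    \item It is exact at the center: $\psi_{\mcal P}^{+}(\bar y;\gamma)=\phi(\bar y;\gamma)$. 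Since $\bar y=z^0\in\mcal P$, we get $\max_{z}\ell_f(\bar y;z)=f(\bar y)$, because each linearization lies below $f(\bar y)$ and $z=\bar y$ attains it. The same holds for $\mcal G$.
\end{enumerate}

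\textbf{Main estimate.} Let $x^*$ be the projection of $\bar y$ onto the closed convex set $X^*_\gamma$, and set $d=\norm{\bar y-x^*}$. I split into two cases.

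\emph{Case $d\le\iota$.} Then $x^*\in\mcal B(\bar y,\iota)\cap X$. By facts 2 and 3,
\[
\phi(\bar y;\gamma)-\phi^*_\gamma\le\psi_{\mcal P}^{+}(\bar y;\gamma)-\psi_{\mcal P}^{+}(x^*;\gamma)\le\iota\nu .
\]

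\emph{Case $d>\iota$.} Take $x=\bar y+(\iota/d)(x^*-\bar y)$, which lies in $\mcal B(\bar y,\iota)\cap X$ by convexity of $X$. Convexity of $\psi_{\mcal P}^{+}$ together with facts 2 and 3 gives
\[
\psi_{\mcal P}^{+}(x;\gamma)\le(1-\iota/d)\,\phi(\bar y;\gamma)+(\iota/d)\,\phi^*_\gamma .
\]
Hence
\[
(\iota/d)\bigl(\phi(\bar y;\gamma)-\phi^*_\gamma\bigr)\le\psi_{\mcal P}^{+}(\bar y;\gamma)-\psi_{\mcal P}^{+}(x;\gamma)\le\iota\nu,
\]
so $\phi(\bar y;\gamma)-\phi^*_\gamma\le\nu d$. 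By $\mu$-QG, $d\le\sqrt{2(\phi(\bar y;\gamma)-\phi^*_\gamma)/\mu}$. Squaring the two bounds yields $\phi(\bar y;\gamma)-\phi^*_\gamma\le 2\nu^2/\mu$.

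Combining the two cases gives the bound $\max\{\iota\nu,2\nu^2/\mu\}$.

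\textbf{Main obstacle.} The only delicate step is the exactness fact 3, $\psi_{\mcal P}^{+}(\bar y;\gamma)=\phi(\bar y;\gamma)$. It relies on the requirement $z^0=\bar y$ in Definition~\ref{def:CWcer}, and it must be checked for the separated form: the positive part is applied to the maximum of the $\mcal G$-linearizations, and that maximum equals $\mcal G(\bar y)$ exactly. The rest is the routine segment-scaling argument above.
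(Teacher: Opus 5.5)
Your proof is correct and follows the paper's argument: the same lower-model and exactness-at-$\bar y$ facts for $\psi_{\mcal P}^{+}$, the same projection of $\bar y$ onto $X^*_\gamma$, and the same two-case split, with QG closing the far case. The only difference is cosmetic: you carry out the segment-scaling (secant-slope) step directly, while the paper cites Lemma~\ref{lem:V_plus_non_increasing} (monotonicity of $\mcal V^{+}_{\mcal P,\gamma}(\iota;\bar y)$ in $\iota$), which is proved by exactly that argument.
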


\noindent\emph{Proof deferred.} The detailed proof is collected in Subsection~\ref{subsec:pwcer-detailed-proofs}.

The $\PWcer$ is generated by running a penalty variant of $\mcal{AWG}$,
namely ${\pawg}$ (Algorithm~\ref{alg:APWG}). Given a candidate gap estimate
$\Delta$ and a factor $\beta>0$, ${\pawg}$ tests the lower level
$\tilde l=\phi(\bar y;\gamma)-(1+\beta)\Delta$. If $\Delta$ is a valid upper
bound on $\phi(\bar y;\gamma)-f^*$, this level lies below the optimal value
$f^*$ by at least the additional margin $\beta\Delta$. If the initial constraint violation is too large, the procedure
returns $\false$ immediately. Otherwise, each call to $\onestepplus$ attempts
to certify the lower level through the penalty cutting-plane model. When the
subproblem becomes infeasible, i.e., $\flag=\true$, or when the projection
leaves the ball of radius $\iota_{\max}$ around $\bar y$, the accumulated cuts
form a $\PWcer$ and ${\pawg}$ returns $\true$. If instead a point with penalty
value below $\phi(\bar y;\gamma)-\Delta$ is found, the procedure returns $\false$. Compared with
$\mcal{AWG}$, the only change in the inner step is that $\onestep$ is replaced
by $\onestepplus$, whose cutting-plane model is the penalty model
in~\eqref{eq:key_observation}.


\begin{algorithm}[h]
    \caption{\protect\label{alg:APWG}Accelerated $\protect\PWcer$ Generation, $\pawg\protect\brbra{\gamma,\bar{y},\Delta,\iota_{\max},\bundleSize,\beta}$}
    \begin{algorithmic}[1]
        \State \textbf{Initialize:} $x^{1}=\bar{y}$, $\hat{x}^{1}=\bar{y}$, $\tilde{l}= f\brbra{\bar{y}}+\gamma\bsbra{\mcal G\brbra{\bar{y}}}_{+}-\brbra{1+\beta}\Delta$
        \If{$\bsbra{\mcal G(\bar{y})}_+ > 2(1+\beta)\Delta / \gamma$}
        \State \textbf{return} ${\false}$\label{enu:false_direct}
        \EndIf
        \For{$t=1,\ldots$}
        \State $\brbra{\hat{x}^{t+1},x^{t+1},\flag}=\onestepplus\brbra{\gamma,\bar{y},\hat{x}^{t},x^{t},\tilde{l},\bundleSize}$
        \If{$\flag$ is $\true$ or $\norm{x^{t+1}-\bar{y}}>\iota_{\max}$}
        \State \textbf{return} ${\true}$ \Comment{$\true$: certificate is generated}\label{enu:return_W_cer}
        \EndIf
        \If{$\phi(\hat{x}^{t+1};\gamma)<\phi(\bar{y};\gamma)-\Delta$}
        \State \textbf{return} ${\false}$ \Comment{$\false$: no certificate}\label{enu:WG_mu_wrong}
        \EndIf
        \EndFor
    \end{algorithmic}
\end{algorithm}

We establish the properties of $\pawg$ in Proposition~\ref{prop:APWG_W_cer_gen_complexity} below.

\begin{proposition}
    \label{prop:APWG_W_cer_gen_complexity}
    Let $\omega_{t}=(t+2)(t+3)/2$ and $\alpha_{t}=4/(t+3)$ in
    Algorithm~\ref{alg:APWG}. Suppose Algorithm~\ref{alg:APWG} terminates during
    its main loop at iteration $t_{\mcal W}$. Then the weighted empirical
    Lipschitz constant average $\bar{L}_{\gamma}(t_{\mcal W}-1)$ and the termination
    iteration $t_{\mcal W}$ satisfy
        {\small
            \begin{equation}\label{eq:weighted_harmonic_mean_APWG}
                \bar{L}_{\gamma}\brbra{t_{\mcal W}-1}\leq
                \frac{24\brbra{1+\beta}}{\beta}
                \frac{\sum_{t=1}^{t_{\mcal W}-1}\onebf_{\mcal E(t)}\brbra{t+3}}
                {\sum_{t=1}^{t_{\mcal W}-1}\brbra{\onebf_{\mcal E(t)}\brbra{t+3}/\tilde{L}_{\gamma,t}}}
                \,\text{and}\,\,
                t_{\mcal W}\leq
                \min\bcbra{t:t>\sqrt{\frac{2\iota_{\max}^{2}\bar{L}_{\gamma}\brbra t+6\brbra{1+\beta}\Delta}{\beta\Delta}}}.
            \end{equation}}
    Here $\tilde{L}_{\gamma,t}$ is the local empirical Lipschitz constant defined
    in~\eqref{eq:defn_L_tilde_phi}, $\mcal E(t)$ is defined after~\eqref{eq:L_phi_N},
    and empty sums are interpreted with the convention $0/0=0$. If, in addition,
    $\phi(\bar y;\gamma)-f^{*}\leq\Delta$ and $\gamma \geq 2\norm{\lambda^*}_1$, then
    Algorithm~\ref{alg:APWG} must return $\true$ (Line~\ref{enu:return_W_cer}). Consequently,
    there exists a point set $\mcal P_{t_{\mcal W}}$ that forms an
    $\brbra{\iota_{\max},\brbra{1+\beta}\Delta/\iota_{\max},\gamma}$-$\PWcer$ for
    $\bar y$.
\end{proposition}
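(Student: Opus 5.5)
The proof has three parts: the harmonic-mean bound, the bound on $t_{\mcal W}$, and the guaranteed $\true$ return with the resulting certificate.

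\textbf{Harmonic-mean bound.} I would follow the proof of \eqref{eq:upper_L_phi_s_t_s} in Theorem~\ref{thm:ideal_papex}, with $f^*$ replaced by $\tilde l=\phi(\bar y;\gamma)-(1+\beta)\Delta$. Before termination, i.e. for $1\le t\le t_{\mcal W}-1$, Line~\ref{enu:WG_mu_wrong} did not fire. Hence $\phi(\hat x^{t};\gamma)\ge\phi(\bar y;\gamma)-\Delta$. This gives the two-sided bound
\[
\beta\Delta\le\phi(\hat x^t;\gamma)-\tilde l\le(1+\beta)\Delta,
\]
which plays the role of $\theta(\phi(\bar y)-f^*)<\cdot\le\phi(\bar y)-f^*$ with $\theta$ replaced by $\beta/(1+\beta)$. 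The assumption $\phi(\hat x^t;\gamma)\ge\tilde l$ of Proposition~\ref{prop:property_papex} then holds automatically. On event iterations, the identity defining $L_{\gamma,t}$ combined with $\mcal E(t)$ and monotonicity gives
\[
\tfrac{\alpha_t}{4}(\phi(\hat x^t)-\tilde l)\le\tfrac{\alpha_t^2}{2}L_{\gamma,t}\norm{x^{t,r_t}-x^{t,l_t}}^2\le\tfrac{3\alpha_t}{4}(\phi(\hat x^t)-\tilde l).
\]
I would then substitute $\norm{x^{t,r_t}-x^{t,l_t}}^2\gtrsim (t+3)\beta\Delta/L_{\gamma,t}$ into the denominator and $L_{\gamma,t}\norm{\cdot}^2\lesssim(t+3)(1+\beta)\Delta$ into the numerator. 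Using $\omega_t\alpha_t^2/2\le4$ and $L_{\gamma,t}\le2\tilde L_{\gamma,t}$ from Proposition~\ref{prop:upper_bound_Lip}, this yields the factor $24(1+\beta)/\beta$.

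\textbf{Bound on $t_{\mcal W}$.} I would apply Proposition~\ref{prop:property_papex}(2) at $t=t_{\mcal W}-1$. The subproblems are feasible before termination, and $\norm{x^{t}-\bar y}\le\iota_{\max}$ for $t\le t_{\mcal W}-1$, since otherwise Line~\ref{enu:return_W_cer} would have fired. With $\omega_1(1-\alpha_1/2)=3$ this gives
\[
\frac{(t+2)(t+3)}{2}\beta\Delta\le 3(1+\beta)\Delta+\bar L_\gamma(t)\iota_{\max}^2,
\]
so $t^2<(2\iota_{\max}^2\bar L_\gamma(t)+6(1+\beta)\Delta)/(\beta\Delta)$ for every non-terminal $t$. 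This is the stated bound.

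\textbf{Guaranteed $\true$ return.} Now assume $\phi(\bar y;\gamma)-f^*\le\Delta$ and $\gamma\ge2\norm{\lambda^*}_1$, and rule out both $\false$ exits.
\begin{itemize}
\item \emph{Early exit.} I would use $\phi(\bar y;\gamma)-f^*\ge(\gamma-\norm{\lambda^*}_1)[\mcal G(\bar y)]_+\ge\frac{\gamma}{2}[\mcal G(\bar y)]_+$, which follows from the Lagrangian bound $f(x)\ge f^*-\norm{\lambda^*}_1[\mcal G(x)]_+$. This gives $[\mcal G(\bar y)]_+\le2\Delta/\gamma$, so the early $\false$ exit cannot occur.
\item \emph{Later exit.} Since $\gamma>\norm{\lambda^*}_1$, we have $\min_X\phi(\cdot;\gamma)=f^*$. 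Hence $\phi(\hat x^{t+1};\gamma)\ge f^*\ge\phi(\bar y;\gamma)-\Delta$, so Line~\ref{enu:WG_mu_wrong} never fires.
\end{itemize}
Termination itself follows from the $t_{\mcal W}$ bound together with $\bar L_\gamma$ being finite, given the harmonic-mean bound with finite $\tilde L$. I expect bounding $\tilde L$ to be the delicate point; I would argue via Proposition~\ref{prop:upper_bound_Lip} or accept finiteness of the empirical constants.

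\textbf{Certificate construction.} I expect this to be the main obstacle. Take $\mcal P$ to be $\bar y$ together with all $\underline x^{t,j}$ evaluated. The cut $\ell_{\mcal G}(\cdot;\bar y)$ should be included; otherwise I would argue via the model bound. Two cases:
\begin{itemize}
\item \emph{Infeasible subproblem.} The model $\psi^+_{\mcal P}>\tilde l$ on $X\cap\bar X(t,0)$, and since $\bar X(t,0)\supseteq\mcal B(\bar y,\norm{x^{t,0}-\bar y})$, we get $\psi^+_{\mcal P}>\tilde l$ on $\mcal B(\bar y,\iota_{\max})\cap X$.
\item \emph{Projection leaves the ball.} $\psi^+_{\mcal P}>\tilde l$ on the ball of radius $\norm{x^{t+1}-\bar y}>\iota_{\max}$, by projection optimality.
\end{itemize}
Since $\psi^+_{\mcal P}(\bar y;\gamma)\le\phi(\bar y;\gamma)=\tilde l+(1+\beta)\Delta$, we obtain $\mcal V^+\le(1+\beta)\Delta/\iota_{\max}=\nu$. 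Finally, $[\mcal G(\bar y)]_+\le2(1+\beta)\Delta/\gamma=2\iota_{\max}\nu/\gamma$ from the passed early check. The points $\underline x^{t,j}$ must lie in $\mcal B(\bar y,\iota_{\max})$, and I would check this using convexity together with $\norm{x^{t,i}-\bar y}\le\iota_{\max}$ up to the final step. This membership is the part needing care, possibly by restricting $\mcal P$ to points computed before the last iterate.
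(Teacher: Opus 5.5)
Your harmonic-mean bound, the bound on $t_{\mcal W}$, and the exclusion of both $\false$ exits follow the paper's proof essentially step for step. Your discussion of termination is unnecessary, since termination is a hypothesis. It would also be insufficient: finiteness of each $\bar L_\gamma(t)$ does not force termination, because you would need $\bar L_\gamma(t)$ to grow slower than $t^2$.

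The gap is in the certificate construction. You flag it as the main obstacle but do not resolve it, and one step there is wrong. In the infeasible case you use $\bar X(t,0)\supseteq\mcal B(\bar y,\norm{x^{t,0}-\bar y})$, but in general the inclusion is reversed. Expanding $\norm{x-\bar y}^2$ around $x^{t,0}$ shows that every $x\in\bar X(t,0)$ satisfies $\norm{x-\bar y}\geq\norm{x^{t,0}-\bar y}$. So points of the ball near $\bar y$, including $\bar y$ itself, are removed from $X(t,i)$ by the halfspace, not by the cuts of call $t$. What is needed is a recursion over calls:
\begin{itemize}
\item Projection optimality of $x^{t,0}=x^{t-1,\bundleSize}$ gives $X(t-1,\bundleSize)\subseteq\bar X(t,0)$.
\item Hence a point of $X\setminus\bar X(t,0)$ either has model value above $\tilde l$ for the cuts of call $t-1$, or lies outside $\bar X(t-1,0)$.
\item Descending to $\bar X(1,0)=\mbb R^n$ and using that $\psi^+_{\mcal P}$ is nondecreasing in $\mcal P$ gives the separation.
\end{itemize}
The same recursion is silently needed behind your ``by projection optimality'' in the radius case.

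The more serious issue is membership. If $\mcal P$ is all evaluation points, then $\mcal P\subseteq\mcal B(\bar y,\iota_{\max})$ generally fails. The radius test is applied only after a complete call to $\onestepplus$, so intermediate projections $x^{t_{\mcal W},j}$ of the final call may already lie outside the ball. Then $\underline x^{t_{\mcal W},j+1}=(1-\alpha_{t_{\mcal W}})\hat x^{t_{\mcal W}}+\alpha_{t_{\mcal W}}x^{t_{\mcal W},j}$ need not lie in it; this can also happen before an infeasible step. Your proposed fix does not work under either reading:
\begin{itemize}
\item Discarding the final call's points discards exactly the cuts that separate the ball from the level $\tilde l$. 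The earlier calls give no guarantee at $x^{t_{\mcal W},0}$, which lies in the ball and belongs to $X(t_{\mcal W}-1,\bundleSize)$.
\item Keeping everything computed before the last projection still admits cut points outside the ball.
\end{itemize}

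The missing ingredient is Lemma~\ref{lem:lower_bound_of_ball}: keep precisely the cut points inside the ball, and show they still separate. To prove it, truncate at the first projection $x^{t^\circ,i^\circ}$ with $\norm{x^{t^\circ,i^\circ}-\bar y}>\iota_{\max}$. If there is none, the exit was an infeasible subproblem whose cut points all lie in the ball, and the recursion applies directly. Otherwise:
\begin{itemize}
\item Every cut defining $X(t^\circ,i^\circ)$ and the earlier level sets is built from incumbents and projections inside the ball, so it lies in the ball by convexity.
\item $X(t^\circ,i^\circ)\cap\mcal B(\bar y,\iota_{\max})=\emptyset$, so the recursion gives $\psi^+_{\mcal P}(x;\gamma)>\tilde l$ on $\mcal B(\bar y,\iota_{\max})\cap X$, where $\mcal P$ is the set of all cut points in the ball.
\end{itemize}
This set contains $\bar y=\underline x^{1,1}$ automatically. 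Your constraint-violation bound from the passed early check then completes the certificate.
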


\noindent\emph{Proof deferred.} The detailed proof is collected in Subsection~\ref{subsec:pwcer-detailed-proofs}.

Proposition~\ref{prop:APWG_W_cer_gen_complexity} bounds the iteration count
for a single call to $\pawg$. Its dependence on the local Lipschitz constants
has the same weighted-harmonic-mean structure as in
Theorem~\ref{thm:ideal_papex}, with the interpretation discussed after that
theorem.

Proposition~\ref{prop:CW_certificate_transfer} demonstrates that a certificate for one penalty function can be used for another penalty function with a larger penalty coefficient.
\begin{proposition}
    \label{prop:CW_certificate_transfer}
    Suppose that the point set $\mcal P$ is
    an $\brbra{\iota,\nu,\gamma}-\PWcer$ for point $\bar{y}$ in problem~\eqref{eq:initial_FP},
    then for any $\tilde{\gamma}>0$, we have $\mcal P$ is also an
    $\brbra{\iota,\tilde{\nu},\max\bcbra{\tilde{\gamma},\gamma}}-\PWcer$ for point $\bar{y}$ in problem~\eqref{eq:initial_FP},
    where $\tilde{\nu}=\max \bcbra{\tilde{\gamma}-\gamma,0}\cdot \frac{2 \nu}{\gamma}+\nu$.
\end{proposition}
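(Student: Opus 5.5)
The plan is to handle the case split $\tilde\gamma\le\gamma$ versus $\tilde\gamma>\gamma$, and in the nontrivial case to decompose the penalty model linearly in the penalty coefficient. If $\tilde\gamma\le\gamma$, then $\max\{\tilde\gamma,\gamma\}=\gamma$ and $\tilde\nu=\nu$. The claim is then exactly the hypothesis, and nothing needs to be done. So assume $\tilde\gamma>\gamma$; the target parameter is then $\tilde\gamma$, with $\tilde\nu=\nu+(\tilde\gamma-\gamma)\cdot 2\nu/\gamma$. The set $\mcal P$ is unchanged, so the requirement $\mcal P\subseteq\mcal B(\bar y,\iota)\cap X$ and $z^0=\bar y$ carry over automatically. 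Only the two inequalities in~\eqref{eq:V+_defn} need checking.

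For the first inequality, I will write $h_{\mcal P}(x):=\bsbra{\max_{z\in\mcal P}\ell_{\mcal G}(x;z)}_+\ge 0$, so that
\[
\psi_{\mcal P}^+(x;\tilde\gamma)=\psi_{\mcal P}^+(x;\gamma)+(\tilde\gamma-\gamma)h_{\mcal P}(x).
\]
Hence, for every $x\in\mcal B(\bar y,\iota)\cap X$,
\[
\psi_{\mcal P}^+(\bar y;\tilde\gamma)-\psi_{\mcal P}^+(x;\tilde\gamma)\le \psi_{\mcal P}^+(\bar y;\gamma)-\psi_{\mcal P}^+(x;\gamma)+(\tilde\gamma-\gamma)h_{\mcal P}(\bar y),
\]
where I drop $-(\tilde\gamma-\gamma)h_{\mcal P}(x)\le0$. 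The key step is evaluating $h_{\mcal P}(\bar y)$. By convexity of $\mcal G$ we have $\ell_{\mcal G}(\bar y;z)\le\mcal G(\bar y)$ for all $z\in\mcal P$, and since $z^0=\bar y\in\mcal P$, equality is attained. Therefore $h_{\mcal P}(\bar y)=[\mcal G(\bar y)]_+\le 2\iota\nu/\gamma$ by the second condition of the original certificate. Maximizing over $x$ and dividing by $\iota$ gives
\[
\mcal V^+_{\mcal P,\tilde\gamma}(\iota;\bar y)\le \nu+(\tilde\gamma-\gamma)\frac{2\nu}{\gamma}=\tilde\nu .
\]

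For the constraint-violation condition, I need $[\mcal G(\bar y)]_+\le 2\iota\tilde\nu/\tilde\gamma$. Since $\tilde\nu=\nu(2\tilde\gamma-\gamma)/\gamma$, this gives
\[
\frac{2\iota\tilde\nu}{\tilde\gamma}=\frac{2\iota\nu}{\gamma}\cdot\frac{2\tilde\gamma-\gamma}{\tilde\gamma}\ge\frac{2\iota\nu}{\gamma}\ge[\mcal G(\bar y)]_+,
\]
where the middle inequality holds because $(2\tilde\gamma-\gamma)/\tilde\gamma\ge1$ exactly when $\tilde\gamma\ge\gamma$. The only genuinely delicate point is the identity $h_{\mcal P}(\bar y)=[\mcal G(\bar y)]_+$. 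It relies on $\bar y$ itself belonging to $\mcal P$, which is built into Definition~\ref{def:CWcer}, and on convexity of $\mcal G$. Everything else is bookkeeping.
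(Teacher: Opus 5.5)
Your proposal is correct and follows essentially the same route as the paper: the same case split, dropping the nonpositive term $-(\tilde\gamma-\gamma)h_{\mcal P}(x)$ (which is the paper's monotonicity of $\psi_{\mcal P}^{+}$ in the penalty parameter), bounding the remaining term at $\bar y$ by $(\tilde\gamma-\gamma)[\mcal G(\bar y)]_+\le(\tilde\gamma-\gamma)\,2\iota\nu/\gamma$, and the same algebra for the constraint-violation condition. Your linear decomposition is slightly cleaner, because the $f$-part cancels outright, and in fact you only need the inequality $h_{\mcal P}(\bar y)\le[\mcal G(\bar y)]_+$, which follows from convexity of $\mcal G$ alone and does not require $\bar y\in\mcal P$.
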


\noindent\emph{Proof deferred.} The detailed proof is collected in Subsection~\ref{subsec:pwcer-detailed-proofs}.

We close this discussion with three remarks on
Proposition~\ref{prop:CW_certificate_transfer}.

First, the middle model in~\eqref{eq:key_observation} is
essential for transferring certificates to penalty parameters no smaller than
the current one.
The proof of
Proposition~\ref{prop:CW_certificate_transfer} uses the monotonicity
\[
    \psi_{\mcal P}^{+}(x;\gamma)\leq \psi_{\mcal P}^{+}(x;\tilde{\gamma})
    \quad \text{whenever}\quad \tilde{\gamma}\geq\gamma,
\]
which follows because $\gamma$ multiplies the single nonnegative term
$\bsbra{\max_{z\in\mcal P}\ell_{\mcal G}(x;z)}_{+}$. This monotonicity
does not hold for the first model in~\eqref{eq:key_observation},
where the positive-part operation is linearized before the cuts are
aggregated.

    {Second, a standard $\Wcer$ is tied to a fixed penalty
        parameter.} Applying the certificate framework of~\cite{partOne} directly
to a fixed penalty function $\phi(\cdot;\gamma)$ can certify approximate
optimality for that single penalty problem under an appropriate QG condition.
However, the adaptive
scheme in Section~\ref{sec:rpapex} updates both $\gamma$ and the QG
estimate $\mu$, and a certificate for $\phi(\cdot;\gamma_1)$ does not by
itself provide the transfer guarantee needed after replacing
$\gamma_1$ by a larger penalty parameter.
This limitation motivates the new certificate for the function-constrained setting.

    {Third, the $\PWcer$ supplies the transfer mechanism needed by
        the parameter-free algorithm.} Proposition~\ref{prop:CW_certificate_transfer}
shows that the same point set remains a $\PWcer$ after increasing the
penalty parameter, with the certificate constant updated to
$\tilde{\nu}=\max\bcbra{\tilde{\gamma}-\gamma, 0}\cdot 2\nu/\gamma+\nu$. Together with the
gap estimate in Proposition~\ref{prop:QG_for_lower_bound}, this transfer
property provides the certificate information used later to maintain
valid lower bounds in Lemma~\ref{lem:lower_bound_valid_alg} and to
support the complexity guarantees in
Theorems~\ref{thm:convergence_CW_certificate}--\ref{thm:convergence_opt_convio}.

\subsection{Detailed proofs\label{subsec:pwcer-detailed-proofs}}

We first establish a preliminary monotonicity lemma for the certificate
residual, and then use it to prove the certificate properties, generation
guarantee, and transfer mechanism stated above.

\begin{lem}\label{lem:V_plus_non_increasing}
    $\mcal V_{\mcal P,\gamma}^{+}\brbra{\iota;\bar{y}}$ (defined in~\eqref{eq:V+_defn}) is non-increasing with respect to $\iota$.
\end{lem}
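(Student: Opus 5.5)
The plan is to reduce the claim to a concavity (radial scaling) argument for the convex function $h(x):=\psi_{\mcal P}^{+}\brbra{\bar{y};\gamma}-\psi_{\mcal P}^{+}\brbra{x;\gamma}$ restricted to segments emanating from $\bar y$. First I will check that $\psi_{\mcal P}^{+}(\cdot;\gamma)$ is convex. It is the sum of a pointwise maximum of affine functions and $\gamma\geq0$ times the composition of the nondecreasing convex map $[\cdot]_+$ with another pointwise maximum of affine functions. Hence $h$ is concave, and $h(\bar y)=0$. Define $M(\iota):=\max_{x\in\mcal B(\bar y,\iota)\cap X}h(x)$. Since $\bar y\in X$ is feasible for every $\iota$, $M(\iota)\geq0$, and $\mcal V^{+}_{\mcal P,\gamma}(\iota;\bar y)=M(\iota)/\iota$.

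Next, fix $0<\iota_1<\iota_2$. I will show $M(\iota_1)/\iota_1\geq M(\iota_2)/\iota_2$. Let $x_2$ attain $M(\iota_2)$; the maximum exists because $h$ is continuous and the set is compact and nonempty. Set $\tau:=\iota_1/\iota_2\in(0,1)$ and $x_1:=\bar y+\tau(x_2-\bar y)=(1-\tau)\bar y+\tau x_2$.
\begin{itemize}
\item Since $X$ is convex and $\bar y,x_2\in X$, we have $x_1\in X$.
\item Since $\norm{x_1-\bar y}=\tau\norm{x_2-\bar y}\leq\tau\iota_2=\iota_1$, we have $x_1\in\mcal B(\bar y,\iota_1)$.
\item Concavity of $h$ with $h(\bar y)=0$ gives $h(x_1)\geq(1-\tau)h(\bar y)+\tau h(x_2)=\tau M(\iota_2)$.
\end{itemize}
Therefore $M(\iota_1)\geq h(x_1)\geq(\iota_1/\iota_2)M(\iota_2)$. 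Dividing by $\iota_1$ gives the monotonicity of $\mcal V^{+}_{\mcal P,\gamma}(\cdot;\bar y)$.

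The only step needing care is confirming convexity of $\psi_{\mcal P}^{+}$, specifically the $[\cdot]_+$ composition. This holds because $[\cdot]_+$ is convex and nondecreasing and its argument is convex. The rest is the standard fact that $M(\iota)/\iota$ is nonincreasing for a concave function vanishing at the center. I expect no real obstacle; if one wants to avoid attainment of the maximum, the same argument works with a supremizing sequence.
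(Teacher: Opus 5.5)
Your proof is correct and follows essentially the same route as the paper: both take a maximizer for the larger radius, pull it toward $\bar y$ by the ratio of the radii (convexity of $X$ keeps it feasible), and invoke convexity of $\psi_{\mcal P}^{+}(\cdot;\gamma)$ along that segment. The paper phrases this step as monotonicity of secant slopes, while you use concavity of $h$ with $h(\bar y)=0$; your form has the minor advantage of not dividing by $\norm{x_{\iota_1}-\bar y}$, which the paper's secant inequality implicitly assumes is nonzero.
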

\begin{proof}
    Let some $\iota_{1}>\iota_{2}>0$ be given and let $x_{\iota_1}$ and
    $x_{\iota_2}$ denote the respective optimal solutions of $\mcal V_{\mcal P,\gamma}^{+}\brbra{\iota_{1};\bar{y}}$
    and $\mcal V_{\mcal P,\gamma}^{+}\brbra{\iota_{2};\bar{y}}$. By the convexity
    of $X$, we have point $\tilde{x}:=\bar{y}+\frac{\iota_{2}}{\iota_{1}}\brbra{x_{\iota_1}-\bar{y}}\in X.$
    Since $\psi_{\mcal P}^{+}\brbra{x;\gamma}$ is convex and $\bar{y},\tilde{x}$
    and $x_{\iota_1}$ lie on the same line, we get from the monotonicity
    of the secant slopes for a convex function:
    \begin{equation}
        \frac{\psi_{\mcal P}^{+}\brbra{\bar{y};\gamma}-\psi_{\mcal P}^{+}\brbra{\tilde{x};\gamma}}{\norm{\tilde{x}-\bar{y}}}\geq\frac{\psi_{\mcal P}^{+}\brbra{\bar{y};\gamma}-\psi_{\mcal P}^{+}\brbra{x_{\iota_1};\gamma}}{\norm{x_{\iota_1}-\bar{y}}}\ .\label{eq:secant}
    \end{equation}
    Combining~\eqref{eq:secant}, $\norm{\bar{y}-\tilde{x}}={\iota_{2}}\norm{x_{\iota_1}-\bar{y}}/{\iota_{1}}$
    gives $\brbra{\psi_{\mcal P}^{+}\brbra{\bar{y};\gamma}-\psi_{\mcal P}^{+}\brbra{\tilde{x};\gamma}}/{\iota_{2}}\geq\brbra{\psi_{\mcal P}^{+}\brbra{\bar{y};\gamma}-\psi_{\mcal P}^{+}\brbra{x_{\iota_1};\gamma}}/{\iota_{1}}$.
    By the definition of $x_{\iota_2}$, we have
    \[
        \frac{\psi_{\mcal P}^{+}\brbra{\bar{y};\gamma}-\psi_{\mcal P}^{+}\brbra{x_{\iota_1};\gamma}}{\iota_{1}}\leq\frac{\psi_{\mcal P}^{+}\brbra{\bar{y};\gamma}-\psi_{\mcal P}^{+}\brbra{\tilde{x};\gamma}}{\iota_{2}}\leq\frac{\psi_{\mcal P}^{+}\brbra{\bar{y};\gamma}-\psi_{\mcal P}^{+}\brbra{x_{\iota_2};\gamma}}{\iota_{2}}.
    \]
    Hence, this proves that $\mcal V_{\mcal P,\gamma}^{+}\brbra{\iota;\bar{y}}$
    is non-increasing with respect to $\iota$.
    \ifdefined\isarxiv
\end{proof}
\else
\qedsymbol\end{proof}
\fi

\begin{lem}
    \label{lem:lower_bound_of_ball}
    Fix $\gamma$ and $\tilde l$. After the first $t$ calls to $\onestepplus$ in
    Algorithm~\ref{alg:APWG}, let $$\mcal P_t = \bcbra{\underline{x}^{i,j}:1\leq i\leq t,\ 1\leq j\leq \bundleSize}$$ be the  set of evaluation points
    generated in those calls, and let $x^{t+1}$ be the projection point returned
    at iteration $t$. For any $\iota\in(0,\norm{x^{t+1}-\bar y})$, define
    $\tilde{\mcal P}_t(\iota):=\mcal P_t\cap\mcal B(\bar y,\iota)$. Then
    $$\min_{x\in\mcal B(\bar y,\iota)\cap X}
        \psi_{\tilde{\mcal P}_t(\iota)}^{+}(x;\gamma)>\tilde l.$$
\end{lem}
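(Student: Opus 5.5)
The proof tracks \emph{inner} iterations, not just outer ones, so that only evaluation points lying inside $\mcal B(\bar y,\iota)$ are ever used. A direct argument with the full set $\mcal P_t$ goes the wrong way. Since $\tilde{\mcal P}_t(\iota)\subseteq\mcal P_t$, each envelope $\max_{z}\ell_f(x;z)$ and $\max_z\ell_{\mcal G}(x;z)$ can only shrink when points are removed. Hence $\psi^+_{\tilde{\mcal P}_t(\iota)}\le\psi^+_{\mcal P_t}$, and a lower bound on $\psi^+_{\mcal P_t}$ does not transfer. Instead, I will locate the exact inner step at which the projection first leaves the ball of radius $\iota$, and show that all cuts generated up to that step lie in $\mcal B(\bar y,\iota)$.

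\textbf{Step 1 (locating the exit index).} Order all projection points $x^{1,0}=\bar y, x^{1,1},\dots,x^{1,\bundleSize}=x^{2,0},x^{2,1},\dots,x^{t,\bundleSize}=x^{t+1}$. By Part 1 of Proposition~\ref{prop:property_papex}, their distances to $\bar y$ are nondecreasing. Since $\norm{x^{1,0}-\bar y}=0<\iota<\norm{x^{t+1}-\bar y}$, there is a first pair $(s,i^*)$ with $\norm{x^{s,i^*}-\bar y}>\iota$, where $1\le s\le t$ and $1\le i^*\le\bundleSize$. All earlier projection points lie in $\mcal B(\bar y,\iota)$. If a subproblem was infeasible before this pair, the argument below applies verbatim with an empty level set, and the conclusion is only stronger.

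\textbf{Step 2 (the relevant cuts lie in the ball).} Every incumbent $\hat x^{r}$ with $r\le s$ is either $\bar y$ or a point $\bar x^{r',j}=(1-\alpha_{r'})\hat x^{r'}+\alpha_{r'}x^{r',j}$ with $r'<r$. By induction these are convex combinations of $\bar y$ and projection points preceding $(s,i^*)$. Each evaluation point $\underline x^{r,j}$, for $r<s$ or for $r=s$ and $j\le i^*$, is a convex combination of $\hat x^{r}$ and $x^{r,j-1}$. All of these points lie in the convex set $\mcal B(\bar y,\iota)$, so
\[
\mcal Q:=\{\underline x^{r,j}: r<s\}\cup\{\underline x^{s,j}: j\le i^*\}\subseteq\tilde{\mcal P}_t(\iota).
\]
Because adding points only raises both envelopes, $\psi^+_{\mcal Q}\le\psi^+_{\tilde{\mcal P}_t(\iota)}$. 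It therefore suffices to prove $\psi^+_{\mcal Q}(x;\gamma)>\tilde l$ for every $x\in\mcal B(\bar y,\iota)\cap X$.

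\textbf{Step 3 (nested halfspaces by induction).} I will show by induction on $r$ the following claim: every $x\in X$ with $\psi^+_{\mcal Q_{r}}(x;\gamma)\le\tilde l$ lies in $\bar X(r+1,0)$. Here $\mcal Q_r$ is the set of cuts from outer iterations $1,\dots,r$, for $r<s$. The base case is trivial, since $x^{1,0}=\bar y$ makes $\bar X(1,0)$ the whole space.

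For the inductive step, take such an $x$. Since $\mcal Q_{r-1}\subseteq\mcal Q_r$, it also satisfies $\psi^+_{\mcal Q_{r-1}}(x;\gamma)\le\tilde l$, so $x\in\bar X(r,0)$ by hypothesis. Restricting both maxima to the iteration-$r$ cuts gives a quantity no larger than $\psi^+_{\mcal Q_r}(x;\gamma)\le\tilde l$, so $x\in X(r,\bundleSize)$. The projection optimality of $x^{r,\bundleSize}=x^{r+1,0}$ gives $\inner{x-x^{r+1,0}}{x^{r+1,0}-\bar y}\ge0$, i.e.\ $x\in\bar X(r+1,0)$.

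Applying the same reasoning once more within outer iteration $s$ yields: any $x\in X$ with $\psi^+_{\mcal Q}(x;\gamma)\le\tilde l$ lies in $X(s,i^*)$. As in the proof of Proposition~\ref{prop:property_papex}, this gives
\[
\norm{x-\bar y}\ge\norm{x^{s,i^*}-\bar y}>\iota .
\]
Contrapositively, every $x\in\mcal B(\bar y,\iota)\cap X$ satisfies $\psi^+_{\mcal Q}(x;\gamma)>\tilde l$. Since the ball intersected with $X$ is compact and $\psi^+_{\mcal Q}$ is continuous, the minimum is attained and is strictly greater than $\tilde l$. Combined with Step 2, this proves the lemma.

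\textbf{Main obstacle.} The delicate part is Step 2 together with choosing the inner-level exit index $(s,i^*)$. These are what make the argument avoid cuts generated at points outside $\mcal B(\bar y,\iota)$, which an outer-level induction alone would not control.
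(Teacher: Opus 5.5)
Your proof is correct and is essentially the projection-separation argument that the paper invokes by reference to Lemma~4.2 of Part~I. You locate the first projection point $x^{s,i^*}$ that leaves $\mathcal B(\bar y,\iota)$, note that every cut generated up to that inner step is taken at a point inside the ball, and chain the halfspaces $\bar X(r,0)$ to place the $\tilde l$-sublevel set of $\psi^{+}$ inside $X(s,i^*)$, hence outside the ball. (Your side remark about an infeasible subproblem occurring before the exit index is vacuous: such a subproblem would end the call there, forcing $\|x^{t+1}-\bar y\|\le\iota$, but the remark does no harm.)
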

\begin{proof}
    The proof follows the proof of Lemma~\red{4.2} in~\cite{partOne}, with the
    unconstrained cutting-plane model $\psi_{\tilde{\mcal P}(\iota),f}$ replaced
    by the penalty model $\psi_{\tilde{\mcal P}_t(\iota)}^{+}(\cdot;\gamma)$, and
    with $\onestep$ replaced by $\onestepplus$. The projection-separation argument
    is otherwise unchanged, so we omit the details.
\end{proof}

\begin{proof}[\underline{Proof of Proposition~\ref{prop:QG_for_lower_bound}}]
    Since $f$ and $\mcal G$ are convex, $\psi_{\mcal P}^{+}\brbra{x;\gamma}\leq\phi\brbra{x;\gamma}$.
    Furthermore, condition $\bar{y}\in\mcal P$ and the convexity  of $f$ and $\mcal G$ imply $\psi_{\mcal P}^{+}\brbra{\bar{y};\gamma}=\phi\brbra{\bar{y};\gamma}$.
    We consider two cases based on the distance between $x_{\gamma}^{*}$ and
    $\bar{y}$, where $x_{\gamma}^{*}=\argmin_{x\in X_{\gamma}^{*}}\bcbra{\norm{\bar{y}-x}}$ and $X_{\gamma}^{*}:=\argmin_{x\in X}\phi\brbra{x;\gamma}$.

    Case 1: $x_{\gamma}^{*}\in\mcal B\brbra{\bar{y},\iota}$. Then we have
    \begin{equation}
        \begin{split}\phi\brbra{\bar{y};\gamma}-f^{*} & \aleq\phi\brbra{\bar{y};\gamma}-\phi\brbra{x_{\gamma}^{*};\gamma}                                                                                        \\
                                              & \bleq\phi\brbra{\bar{y};\gamma}-\psi_{\mcal P}^{+}\brbra{x_{\gamma}^{*};\gamma}                                                                          \\
                                              & \cleq\max_{x\in\mcal B\brbra{\bar{y},\iota}}\bcbra{\psi_{\mcal P}^{+}\brbra{\bar{y};\gamma}-\psi_{\mcal P}^{+}\brbra{x_{\gamma}^{*};\gamma}}\leq\iota\nu
        \end{split}
        \ .\label{eq:ineq_0}
    \end{equation}
    In the display above, $(a)$ follows the definition of $x_{\gamma}^{*}$ such that
    $\phi\brbra{x_{\gamma}^{*};\gamma}\leq\phi\brbra{x^{*};\gamma}$,
    where $x^{*}$ is an optimal solution of~\eqref{eq:initial_FP}; $(b)$ follows from
    $\psi_{\mcal P}^{+}\brbra{x_{\gamma}^{*};\gamma}\leq \phi\brbra{x_{\gamma}^{*};\gamma}$ by convexity of function $f$ and $\mcal G$; $(c)$ holds by
    $\bar{y}\in\mcal P$ implies $\phi\brbra{\bar{y};\gamma}=\psi_{\mcal P}^{+}\brbra{\bar{y};\gamma}$.

    Case 2: $x_{\gamma}^{*}\in\mcal B^{c}\brbra{\bar{y},\iota}$. From
    the radius argument that $\norm{x_{\gamma}^{*}-\bar{y}}\geq\iota$
    and $\mcal V_{\mcal P,\gamma}^{+}\brbra{\iota;\bar{y}}$ is non-increasing
    with respect to $\iota$, we have
    \begin{equation}
        \begin{split}\phi\brbra{\bar{y};\gamma}-\phi\brbra{x_{\gamma}^{*};\gamma} & \leq\psi_{\mcal P}^{+}\brbra{\bar{y};\gamma}-\psi_{\mcal P}^{+}\brbra{x_{\gamma}^{*};\gamma}                                                           \\
                                                                          & \leq\max_{x\in\mcal B\brbra{\bar{y},\norm{\bar{y}-x_{\gamma}^{*}}}}\bcbra{\psi_{\mcal P}^{+}\brbra{\bar{y};\gamma}-\psi_{\mcal P}^{+}\brbra{x;\gamma}} \\
                                                                          & =\norm{\bar{y}-x_{\gamma}^{*}}\mcal V_{\mcal P,\gamma}^{+}\brbra{\norm{\bar{y}-x_{\gamma}^{*}};\bar{y}}                                                \\
                                                                          & \leq\norm{\bar{y}-x_{\gamma}^{*}}\mcal V_{\mcal P,\gamma}^{+}\brbra{\iota;\bar{y}}\leq\norm{\bar{y}-x_{\gamma}^{*}}\nu
        \end{split}
        \ ,\label{eq:ineq_1}
    \end{equation}
    where the third inequality holds by Lemma~\ref{lem:V_plus_non_increasing}.
    Combining the above inequality and $\phi\brbra{\bar{y};\gamma}-\phi\brbra{x_{\gamma}^{*};\gamma}\geq\frac{\mu}{2}\norm{\bar{y}-x_{\gamma}^{*}}^{2}$,
    we have
    \begin{equation}
        \norm{\bar{y}-x_{\gamma}^{*}}\leq\frac{2\nu}{\mu}\ .\label{eq:radius_upper}
    \end{equation}
    Putting~\eqref{eq:ineq_1} and~\eqref{eq:radius_upper} together,
    we have
    \begin{equation}
        \phi\brbra{\bar{y};\gamma}-\phi\brbra{x_{\gamma}^{*};\gamma}\leq\frac{2\nu^{2}}{\mu}\ .
    \end{equation}
    The inequality $\brbra a$ in~\eqref{eq:ineq_0} also holds. This completes the proof.
    \ifdefined\isarxiv
\end{proof}
\else
\qedsymbol\end{proof}
\fi

\begin{proof}[\underline{Proof of Proposition~\ref{prop:APWG_W_cer_gen_complexity}}]
    Set $\tilde l=\phi(\bar y;\gamma)-(1+\beta)\Delta$. Algorithm~\ref{alg:APWG}
    is the penalty version of the $\mcal{AWG}$ routine, with
    $\alpha_t=4/(t+3)$ and $\omega_t=(t+2)(t+3)/2$. Before termination,
    we have
    $\beta\Delta\leq \phi(\hat{x}^{t+1};\gamma)-\tilde l\leq(1+\beta)\Delta$
    for $1\leq t\leq t_{\mcal W}-1$. The first bound in
    \eqref{eq:weighted_harmonic_mean_APWG} follows from the weighted-average
    estimate~\eqref{eq:upper_L_phi_s_t_s} and Proposition~\ref{prop:upper_bound_Lip},
    which replaces $L_{\gamma,t}$ by $2\tilde L_{\gamma,t}$ and gives the
    constant $24(1+\beta)/\beta$.

    We next prove the second inequality in~\eqref{eq:weighted_harmonic_mean_APWG}.
    It suffices to show that any integer $t$ satisfying
    \[
        t>\sqrt{\frac{2\iota_{\max}^{2}\bar L_{\gamma}\brbra t
                +6(1+\beta)\Delta}{\beta\Delta}}
    \]
    cannot be a completed nonterminating iteration. Suppose, to the contrary, that
    Algorithm~\ref{alg:APWG} has not terminated by the end of such an iteration $t$.
    Then all subproblems up to iteration $t$ are feasible, the radius test at
    Line~\ref{enu:return_W_cer} gives $\norm{x^{t+1}-\bar y}\leq\iota_{\max}$, and
    the false-decrease test at Line~\ref{enu:WG_mu_wrong} gives
    \[
        \phi(\hat x^{t+1};\gamma)-\tilde l\geq\beta\Delta .
    \]
    Since $\beta>0$, such a $t$ is at least $2$. Applying
    Proposition~\ref{prop:property_papex} with
    $\omega_t=(t+2)(t+3)/2$, $\alpha_t=4/(t+3)$, and
    $\omega_1(1-\alpha_1/2)=3$ yields
    \[
        \frac{1}{2}(t+2)(t+3)
        \brbra{\phi(\hat x^{t+1};\gamma)-\tilde l}
        -3(1+\beta)\Delta
        \leq \bar L_{\gamma}\brbra t\norm{x^{t+1}-\bar y}^{2}.
    \]
    Hence
    \[
        \frac{1}{2}(t+2)(t+3)\beta\Delta-3(1+\beta)\Delta
        \leq \bar L_{\gamma}\brbra t\,\iota_{\max}^{2}.
    \]
    The displayed condition on $t$ implies the reverse strict inequality, since
    $(t+2)(t+3)>t^{2}$, a contradiction. Thus no iteration before
    $t_{\mcal W}$ can satisfy the displayed condition, proving the second
    inequality in~\eqref{eq:weighted_harmonic_mean_APWG}.

    Suppose next that Algorithm~\ref{alg:APWG} reaches Line~\ref{enu:return_W_cer}. If $\flag$ is $\false$, then we have $\norm{x^{t_{\mcal W}}-\bar y}> \iota_{\max}$.
    Let
    $\mcal P_{t_{\mcal W}}:=\bcbra{\underline{x}^{i,j}:1\leq i\leq t_{\mcal W},
            1\leq j\leq\bundleSize}\cap\mcal B\brbra{\bar y,\iota_{\max}}$.
    Then $\mcal P_{t_{\mcal W}}\subseteq\mcal B\brbra{\bar y,\iota_{\max}}\cap X$.
    By Lemma~\ref{lem:lower_bound_of_ball} with $\iota=\iota_{\max}$ in the
    radius-triggered case,
    $\psi_{\mcal P_{t_{\mcal W}}}^{+}(x;\gamma)>\tilde l$ for all
    $x\in\mcal B\brbra{\bar y,\iota_{\max}}\cap X$, while convexity gives
    $\psi_{\mcal P_{t_{\mcal W}}}^{+}(\bar y;\gamma)\leq \phi(\bar y;\gamma)$.
    Hence
    \[
        \mcal V_{\mcal P_{t_{\mcal W}},\gamma}^{+}\brbra{\iota_{\max};\bar y}
        < \frac{\phi\brbra{\bar y;\gamma}-\tilde l}{\iota_{\max}}
        =\frac{(1+\beta)\Delta}{\iota_{\max}} .
    \]
    Under the additional assumptions, $\gamma\geq 2\norm{\lambda^*}_1$ implies
    $\norm{\lambda^*}_1\leq \gamma/2$. The KKT lower bound gives
    $f(\bar y)-f^*\geq-\norm{\lambda^*}_1\bsbra{\mcal G(\bar y)}_+$, and therefore
    $\Delta\geq\phi(\bar y;\gamma)-f^*
        \geq(\gamma-\norm{\lambda^*}_1)\bsbra{\mcal G(\bar y)}_+
        \geq \gamma\bsbra{\mcal G(\bar y)}_+/2$. Thus
    $\bsbra{\mcal G(\bar y)}_+\leq2\Delta/\gamma\leq2(1+\beta)\Delta/\gamma$.
    The Wolfe-gap and constraint-violation bounds prove that
    $\mcal P_{t_{\mcal W}}$ is an
    $\brbra{\iota_{\max},(1+\beta)\Delta/\iota_{\max},\gamma}$-$\PWcer$. If $\flag$ is $\true$, then the subproblem is infeasible. Similar point set $\mcal P_{t_{\mcal W}}$ can be constructed and it is also a $\brbra{\iota_{\max},(1+\beta)\Delta/\iota_{\max},\gamma}$-$\PWcer$.

    It remains to rule out termination with $\false$ under the additional assumptions.
    The preceding constraint bound excludes Line~\ref{enu:false_direct}. Since
    $\gamma\geq 2\norm{\lambda^*}_1$, exact penalization gives
    $\phi(x;\gamma)\geq f^*$ for all $x\in X$; because
    $\phi(\bar y;\gamma)-\Delta\leq f^*$, the decrease test in
    Line~\ref{enu:WG_mu_wrong} cannot hold. Therefore the main-loop termination
    must occur at Line~\ref{enu:return_W_cer}.
    \ifdefined\isarxiv
\end{proof}
\else
\qedsymbol\end{proof}
\fi

\begin{proof}[\underline{Proof of Proposition~\ref{prop:CW_certificate_transfer}}]
    If $\tilde{\gamma}\leq \gamma$, the claim follows immediately with $\max\bcbra{\gamma, \tilde{\gamma}} = \gamma$ and $\tilde{\nu} = \nu$. It remains to consider the case $\tilde{\gamma}>\gamma$. By the definition of $\brbra{\iota,\nu,\gamma}-\PWcer$, we have $\mcal P\subseteq\mcal B\brbra{\bar{y},\iota}$.
    Hence, it remains to verify $\mcal V_{\mcal P,\tilde{\gamma}}^{+}\brbra{\iota;\bar{y}}\leq\tilde{\nu}$.
    It follows from the definition of $\mcal V_{\mcal P,\tilde{\gamma}}^{+}\brbra{\iota;\bar{y}}$
    that
    \begin{equation*}
        \begin{split}\iota\mcal V_{\mcal P,\tilde{\gamma}}^{+}\brbra{\iota;\bar{y}}
             & =\max_{x\in\mcal B\brbra{\bar{y},\iota}\cap X}\bcbra{\psi_{\mcal P}^{+}\brbra{\bar{y};\tilde{\gamma}}-\psi_{\mcal P}^{+}\brbra{\bar{y};\gamma}+\psi_{\mcal P}^{+}\brbra{\bar{y};\gamma}-\psi_{\mcal P}^{+}\brbra{x;\gamma}+\psi_{\mcal P}^{+}\brbra{x;\gamma}-\psi_{\mcal P}^{+}\brbra{x;\tilde{\gamma}}} \\
             & \aleq\psi_{\mcal P}^{+}\brbra{\bar{y};\tilde{\gamma}}-\psi_{\mcal P}^{+}\brbra{\bar{y};\gamma}+\iota\nu+\max_{x\in\mcal B\brbra{\bar{y},\iota}\cap X}\bcbra{\psi_{\mcal P}^{+}\brbra{x;\gamma}-\psi_{\mcal P}^{+}\brbra{x;\tilde{\gamma}}}                                                                \\
             & \beq\phi\brbra{\bar{y};\tilde{\gamma}}-\phi\brbra{\bar{y};\gamma}+\iota\nu+\max_{x\in\mcal B\brbra{\bar{y},\iota}\cap X}\bcbra{\psi_{\mcal P}^{+}\brbra{x;\gamma}-\psi_{\mcal P}^{+}\brbra{x;\tilde{\gamma}}}                                                                                             \\
             & =\brbra{\tilde{\gamma}-\gamma}\bsbra{\mcal G\brbra{\bar{y}}}_{+}+\iota\nu+\max_{x\in\mcal B\brbra{\bar{y},\iota}\cap X}\bcbra{\psi_{\mcal P}^{+}\brbra{x;\gamma}-\psi_{\mcal P}^{+}\brbra{x;\tilde{\gamma}}}                                                                                              \\
             & \cleq\brbra{\tilde{\gamma}-\gamma}\bsbra{\mcal G\brbra{\bar{y}}}_{+}+\iota\nu \dleq \brbra{\tilde{\gamma}-\gamma}\cdot \frac{2\iota \nu}{\gamma}+\iota\nu = \iota \tilde{\nu}
        \end{split}
        \ ,
    \end{equation*}
    In above, $(a)$ holds due to the definition of an $\brbra{\iota,\nu,\gamma}$-$\PWcer$, which ensures that
    \[
        \max_{x\in\mcal B\brbra{\bar{y},\iota}\cap X} \left\{ \psi_{\mcal P}^{+}\brbra{\bar{y};\gamma} - \psi_{\mcal P}^{+}\brbra{x;\gamma} \right\} \leq \iota\nu\ .
    \]
    $(b)$ holds because $\bar{y}=z^{0}\in\mcal P$, and for every $z\in\mcal P$ we have $\ell_f(\bar y;z)\le f(\bar y)$ and $\ell_{\mcal G}(\bar y;z)\le \mcal G(\bar y)$, with equality at $z=\bar y$; hence $\psi_{\mcal P}^{+}\brbra{\bar{y};\gamma} = \phi\brbra{\bar{y};\gamma}$ and $\psi_{\mcal P}^{+}\brbra{\bar{y};\tilde{\gamma}} = \phi\brbra{\bar{y};\tilde{\gamma}}$. Step $(c)$ follows from the definition of $\psi_{\mcal P}^{+}$: when $\tilde\gamma > \gamma$, we have $\psi_{\mcal P}^{+}(x;\gamma)\le \psi_{\mcal P}^{+}(x;\tilde\gamma)$ for every $x$, and therefore $\max_{x\in\mcal B\brbra{\bar{y},\iota}\cap X}\bcbra{\psi_{\mcal P}^{+}(x;\gamma)-\psi_{\mcal P}^{+}(x;\tilde\gamma)}\le0$. Step $(d)$ follows from the definition of $\brbra{\iota,\nu,\gamma}$-$\PWcer$.
    Finally, we verify the second relationship in $\PWcer$ as the following inequality:
    \begin{equation}
        \begin{aligned}
            \bsbra{\mcal G(\bar{y})}_+\leq \frac{2\iota\nu}{\gamma} = \frac{2\brbra{\tilde{\gamma}-\gamma}\iota\nu}{\tilde{\gamma}\gamma}+\frac{2\iota\nu}{\tilde{\gamma}}\leq\frac{2\iota\tilde{\nu}}{\tilde{\gamma}}\ .
        \end{aligned}
    \end{equation}
    \ifdefined\isarxiv
\end{proof}
\else
\qedsymbol\end{proof}
\fi


\begin{figure}[t]
    \centering
    \begin{minipage}[t]{0.48\textwidth}
        \centering
        \includegraphics[width=\linewidth]{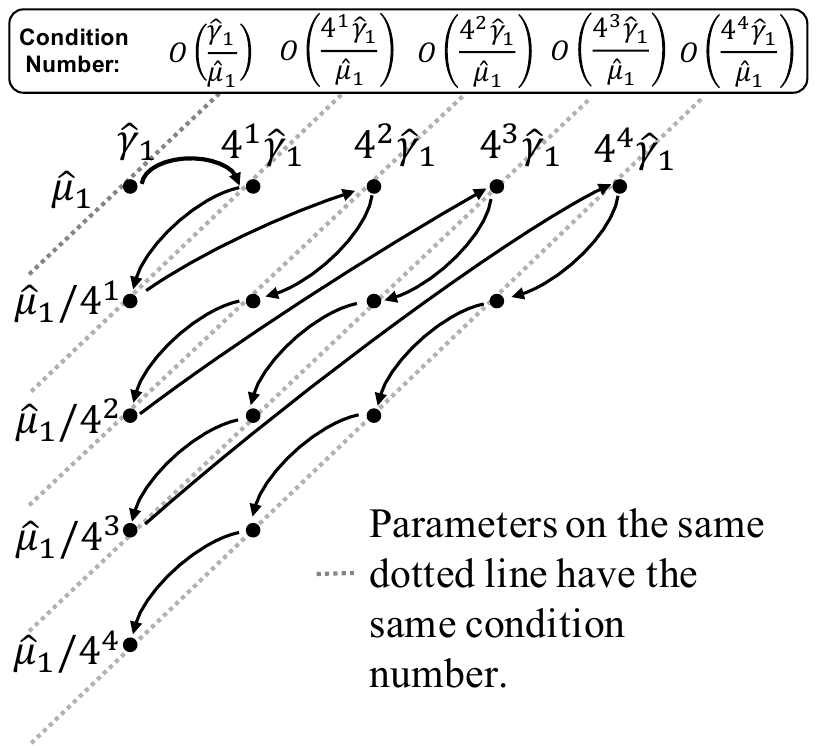}
        \captionof{figure}{Search order of parameter pairs in $\adjustPara$.\protect\label{fig:Parameter-Adjustment-Strategy}}
    \end{minipage}\hfill
    \begin{minipage}[t]{0.50\textwidth}
        \centering
        \includegraphics[width=\linewidth]{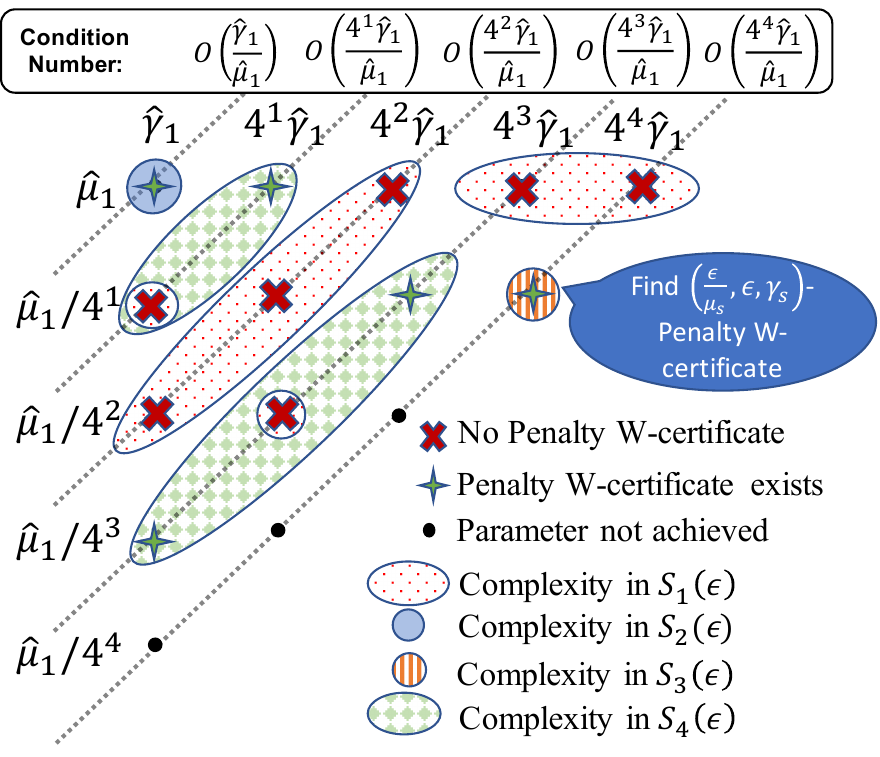}
        \captionof{figure}{Complexity decomposition for the function-constrained setting.\protect\label{fig:function-constrained-complexity}}
    \end{minipage}
\end{figure}
\section{\protect\label{sec:rpapex}Restarted \papex}
In this section, we propose an almost parameter-free restarted penalty method that
does not require a priori knowledge of either  $\norm{\lambda^*}_1$ or the QG modulus $\mu^*$.
\begin{function}
    \caption{ParamAdjust$\brbra{\bar{y},\protect\brbra{\hat{\protect\gamma},\hat{\mu}},\Cert,\bar{y}^{0},\protect\brbra{\hat{\protect\gamma}_{1},\hat{\mu}_{1}}}$\label{alg:parameter_adjust}}
    \begin{algorithmic}[1]
        \State $\brbra{y_c,\iota_c,\nu_c,\gamma_c}\leftarrow \Cert$
        \Statex \vspace{-0.5ex}{\footnotesize\dotfill\ \textit{Update parameter  estimates along the search direction in Figure~\ref{fig:Parameter-Adjustment-Strategy}}}\vspace{-0.5ex}
        \State \label{enu:adjust_parameter_begin}$\hat{\gamma}_{\text{new}}=\hat{\gamma}/4$, $\hat{\mu}_{\text{new}}=\hat{\mu}/4$
        \If{$\hat{\gamma}_{\text{new}}<\hat{\gamma}_{1}$}
        \State \label{enu:adjust_parameter_end}$\hat{\gamma}_{\text{new}}=\hat{\gamma}_{1}\cdot\hat{\mu}_{1}/\hat{\mu}_{\text{new}}$, $\hat{\mu}_{\text{new}}=\hat{\mu}_{1}$
        \EndIf
        \Statex \vspace{-0.5ex}{\footnotesize\dotfill\ \textit{Certificate transformation and optimality gap estimate update}}\vspace{-0.5ex}
        \State \label{enu:certificate_transfer_begin}$\bar{\nu}=\max\bcbra{\hat{\gamma}_{\text{new}}-\gamma_c,0}\cdot\frac{2\nu_c}{\gamma_c}+\nu_c$
        \State $\hat{y}=\argmin_{x\in\bcbra{\bar{y},y_c, \bar{y}^0}}\bcbra{\phi\brbra{x;\hat{\gamma}_{\text{new}}}}$\label{enu:bar_y}
        \State \label{enu:lower_bound_compute}$\underline{\phi} = \max\bcbra{\phi\brbra{y_c;\max\bcbra{\gamma_c,\hat{\gamma}_{\text{new}}}}-\max\bcbra{\iota_c \bar{\nu},\frac{2\bar{\nu}^{2}}{\hat{\mu}_{\text{new}}}},\max_{y\in \bcbra{\bar{y}, y_c, \bar{y}^0}}\bcbra{\phi(y;\hat{\gamma}_{\text{new}})-\frac{2\norm{\phi^{\prime}(y;\hat{\gamma}_{\text{new}})}^2}{\hat{\mu}_{\text{new}}}}}$
        \State \label{enu:certificate_transfer_end}$\tilde{\Delta}_{\text{new}}=\phi(\hat{y};\hat{\gamma}_{\text{new}}) - \underline{\phi}$\label{enu:gap_parameter_adjust}
        \State \textbf{return} $\brbra{\hat{\gamma}_{\text{new}},\hat{\mu}_{\text{new}},\hat{y},\tilde{\Delta}_{\text{new}}}$
    \end{algorithmic}
\end{function}

Algorithm~\ref{alg:rAPEXC} maintains two estimates, $\hat\gamma_s$ and
$\hat\mu_s$, and the induced gap estimate $\tilde\Delta_s$. The record $\Cert$
is a four-entry tuple
$\Cert=\brbra{y_{\mathrm c},\iota_{\mathrm c},\nu_{\mathrm c},\gamma_{\mathrm c}}$,
where $y_{\mathrm c}$ is the reference solution, $\iota_{\mathrm c}$ is the certificate radius, $\nu_{\mathrm c}$ is
the normalized Wolfe tolerance, and
$\gamma_{\mathrm c}$ is the penalty parameter used for that certificate (see Definition~\ref{def:CWcer}). The
tuple stores only these certificate parameters, not the evaluation point set;
Proposition~\ref{prop:certificate_generation} guarantees the existence of the
corresponding point set whenever $\Cert$ is refreshed at
Line~\ref{enu:store_certificate}. The initial value of $\Cert$ is only a
placeholder until the first finite certificate is generated.

At outer stage $s$, Algorithm~\ref{alg:rAPEXC} first calls $\pagr$
(Algorithm~\ref{alg:ACGR}). If $\lowerflag=\true$ and
$[\mcal G(\bar y^s)]_+\le 2\theta\tilde\Delta_s/\hat\gamma_s$, the algorithm
bypasses parameter validation; otherwise, it proceeds to the validation branch.

During parameter validation, the algorithm first tests $\hat x^s$ against the
lower bound $\phi(\bar y^s;\hat\gamma_s)-\tilde\Delta_s$. A violation rejects
$(\hat\gamma_s,\hat\mu_s)$ immediately; if no violation occurs, the algorithm
calls $\pawg$, and $\paramflag=\false$ also rejects the pair.
Proposition~\ref{prop:flag_false} shows that either outcome certifies
$\hat\gamma_s<2\norm{\lambda^*}_1$ or $\hat\mu_s>\mu^*$. Function~\ref{alg:parameter_adjust}
then updates the pair, recomputes $\tilde\Delta_s$ using the stored certificate,
and reruns $\pagr$.

When the stage is accepted, the unified refresh sets $\bar y^{s+1}=\hat x^s$
and retains the current parameter estimates. It stores the certificate
when $\lowerflag=\true$ and the feasibility threshold holds; otherwise, it
stores the certificate justified by the $\pawg$ validation. Finally, it computes
$\tilde\Delta_{s+1}$ using the lower bound
$\phi(\bar y^s;\hat\gamma_s)-\theta\tilde\Delta_s$ if $\lowerflag=\true$, and
$\phi(\bar y^s;\hat\gamma_s)-\tilde\Delta_s$ otherwise.

The main difference from the unconstrained parameter-free scheme
of~\cite{partOne} is that the parameter guess is two-dimensional. In the unconstrained
case, $\rapex$ adjusts only the QG estimate; here, an invalid stage may come
from either an underestimated penalty parameter or an overestimated QG modulus,
as formalized in Proposition~\ref{prop:flag_false}. Subroutine $\adjustPara$
(Function~\ref{alg:parameter_adjust}) organizes this search by the induced
condition number: Line~\ref{enu:adjust_parameter_begin} quarters both
estimates; the block ending at Line~\ref{enu:adjust_parameter_end} moves to the
next search diagonal when the penalty estimate would fall below its initial
value; Line~\ref{enu:certificate_transfer_begin} transfers the stored
certificate to the updated penalty parameter; and
Line~\ref{enu:lower_bound_compute} forms a lower bound by taking the maximum of
the transferred-certificate bound and the local-subgradient bound under the new QG
estimate; Line~\ref{enu:gap_parameter_adjust} then recomputes the gap estimate
from this lower bound. This mechanism
lets Algorithm~\ref{alg:rAPEXC} remain almost parameter-free while preserving
certified lower-bound information across parameter updates.

\begin{algorithm}[htp]
    \caption{\protect\label{alg:ACGR}Accelerated Gap Reduction for the exact penalty function, $\pagr \protect\brbra{\protect\gamma,\bar{y},\Delta,\mu,\bundleSize, \theta}$}
    \begin{algorithmic}[1]
        \State $\tilde{l}= \phi\brbra{\bar{y};\gamma}-\theta\Delta$, $x^{1}=\bar{y},\hat{x}^{1} = \bar{y}$, $\underline{\phi}=\phi\brbra{\bar{y};\gamma}-\Delta$
        \For{$t=1,\ldots$}
        \State $\brbra{\hat{x}^{t+1},x^{t+1},\flag}=\onestepplus\brbra{\gamma,\bar{y},\hat{x}^{t},x^{t}\mid \tilde{l},\bundleSize}$
        \If{$\flag $ is $ \true$ or $\norm{x^{t+1}-\bar{y}}^{2}>\frac{2\theta\Delta}{\mu}$\label{enu:radius_test}}
        \State \textbf{return} $\brbra{\hat{x}^{t+1},\true}$ \Comment{$\true$: certificate is generated} \label{enu:lower_improve}
        \ElsIf{\(\phi(\hat x^{t+1};\gamma)-\underline\phi \le \theta\Delta\)}
        \State \textbf{return} $\brbra{\hat{x}^{t+1},\false}$ \Comment{$\false$: no certificate}\label{enu:prox_center_change}
        \EndIf
        \EndFor
    \end{algorithmic}
\end{algorithm}

\begin{algorithm}
    \caption{Restarted $\papex$\protect\label{alg:rAPEXC}}
    \begin{algorithmic}[1]
        \State \textbf{Input}: initial penalty parameter $\hat{\gamma}_{1}$ (recommended to be 2), initial QG coefficient $\hat{\mu}_{1}>0$, number of cuts $\bundleSize$, shrink factor $\theta\in\brbra{\frac{1}{2},1}$, extra factor $\beta>0$, initial solution $\bar{y}^{0}$
        \State \textbf{Initialize}: $\bar{y}^{1}=\bar{y}^{0}$, $\tilde{\Delta}_{1}=\frac{2\norm{\phi^{\prime}\brbra{\bar{y}^{1};\hat{\gamma}_{1}}}^{2}}{\hat{\mu}_{1}},\Cert=\brbra{\bar{y}^0, 1,+\infty,\hat{\gamma}_1}$
        \For{$s=1,\ldots$}

        \State $\brbra{\hat{x}^{s},\lowerflag}=\pagr\brbra{\hat{\gamma}_{s},\bar{y}^{s},\tilde{\Delta}_{s},\hat{\mu}_{s}\mid\bundleSize, \theta}$\label{enu:run_ACGR}
        \vspace{-1.5ex}
        \Statex {\footnotesize\dotfill\ \textit{No certificate by $\pagr$: run $\pawg$ and check parameters}}\vspace{-0.3ex}
        \If{$\lowerflag$ is $\false$ or $[\mcal G(\bar{y}^s)]_+> \frac{2\theta\tilde{\Delta}_s}{\hat{\gamma}_s}$}\label{enu:lower_and_upper_improve}
        \State $\paramflag=\true$
        \If{$\phi(\hat{x}^s;\hat{\gamma}_s)\geq \phi(\bar{y}^s;\hat{\gamma}_s) - \tilde{\Delta}_{s}$}\label{enu:upper_candidate_check}
        \State ${\paramflag}=\pawg\brbra{\hat{\gamma}_{s},\bar{y}^{s},\tilde{\Delta}_{s},\sqrt{\frac{2\brbra{1+\beta}\tilde{\Delta}_{s}}{\hat{\mu}_{s}}}\mid\bundleSize,\beta}$\label{enu:run_APWG}
        \EndIf

        \If{$\paramflag$ is $\false$ or $\phi(\hat{x}^s;\hat{\gamma}_s)< \phi(\bar{y}^s;\hat{\gamma}_s) - \tilde{\Delta}_{s}$}\label{enu:parameter_reject}
        \State $\brbra{\hat{\gamma}_{s},\hat{\mu}_{s},\bar{y}^{s},\tilde{\Delta}_{s}}=\adjustPara\brbra{\bar{y}^{s},\brbra{\hat{\gamma}_{s},\hat{\mu}_{s}},\Cert \mid \bar{y}^{0},\brbra{\hat{\gamma}_{1},\hat{\mu}_{1}}}$ and go to Line~\ref{enu:run_ACGR}\label{enu:adjust_parameter1}
        \EndIf

        \EndIf
        \vspace{-1.2ex}
        \Statex {\footnotesize\dotfill\ \textit{Use current best solution  as next candidate; update certificate}}\vspace{-0.3ex}
        \State \label{enu:store_certificate}$\bar{y}^{s+1}=\hat{x}^{s}$, $\hat{\gamma}_{s+1}=\hat{\gamma}_{s}$, $\hat{\mu}_{s+1}=\hat{\mu}_{s}$ and set
            {\footnotesize\begin{equation*}
                    \Cert=\protect\begin{cases}
                        \protect\brbra{\bar{y}^{s},\sqrt{\frac{2\theta\tilde{\Delta}_{s}}{\hat{\mu}_{s}}},\sqrt{\frac{\theta\hat{\mu}_{s}\tilde{\Delta}_{s}}{2}},\hat{\gamma}_{s}}       & \lowerflag\ \text{is}\ \true\  \text{and}\ [\mcal G(\bar{y}^s)]_+ \leq  \frac{2\theta\tilde{\Delta}_s}{\hat{\gamma}_s} \\
                        \protect\brbra{\bar{y}^{s},\sqrt{\frac{2(1+\beta)\tilde{\Delta}_{s}}{\hat{\mu}_{s}}},\sqrt{\frac{(1+\beta)\hat{\mu}_{s}\tilde{\Delta}_{s}}{2}},\hat{\gamma}_{s}} & \text{otherwise}
                        \protect\end{cases}
                \end{equation*}}{\footnotesize\begin{equation*}
                    \tilde{\Delta}_{s+1}=\protect\begin{cases}
                        \phi\protect\brbra{\bar{y}^{s+1};\hat{\gamma}_{s}}-\protect\brbra{\phi\protect\brbra{\bar{y}^{s};\hat{\gamma}_{s}}-\theta\tilde{\Delta}_{s}} & \lowerflag\ \text{is}\ \true  \\
                        \phi\protect\brbra{\bar{y}^{s+1};\hat{\gamma}_{s}}-\protect\brbra{\phi\protect\brbra{\bar{y}^{s};\hat{\gamma}_{s}}-\tilde{\Delta}_{s}}       & \lowerflag\ \text{is}\ \false
                        \protect\end{cases}
                \end{equation*}}

        \EndFor
    \end{algorithmic}
\end{algorithm}

We now state the convergence results for restarted $\papex$.
Proposition~\ref{prop:convergence_rapex_gap_reduction} bounds the cost of one
call to $\pagr$, and Proposition~\ref{prop:flag_false} bounds the number
of parameter-adjustment steps. Theorem~\ref{thm:convergence_CW_certificate}
then gives the certificate-complexity bound. Finally,
Theorem~\ref{thm:convergence_opt_convio} and
Corollary~\ref{cor:convergence_opt_convio_small_mu} give the corresponding
optimality-gap and constraint-violation guarantees under the two initial-QG
regimes.


We begin with the single-call estimate for the gap-reduction routine
$\pagr$.

\begin{proposition}\label{prop:convergence_rapex_gap_reduction}
    Choose $\omega_t=(t+2)(t+3)/2$, $\alpha_t=4/(t+3)$, and $\theta\in(1/2,1)$ in Algorithm~\ref{alg:ACGR}. If Algorithm~\ref{alg:ACGR}
    returns at the $t_{\mcal G}$-th iteration, then the weighted mean of empirical local Lipschitz constants
    $\bar L_{\gamma}\brbra{t_{\mcal G}-1}$ and the iteration number $t_{\mcal G}$
    satisfy
    \begin{equation}\label{eq:res_PAGR_converge}
        \bar{L}_{\gamma}\brbra{t_{\mcal G}-1}\leq
        \frac{24\theta}{2\theta-1}
        \frac{\sum_{t=1}^{t_{\mcal G}-1}\onebf_{\mcal E(t)}(t+3)}
        {\sum_{t=1}^{t_{\mcal G}-1}\brbra{\onebf_{\mcal E(t)}(t+3)/\tilde L_{\gamma,t}}}
        \,\text{and}\,\,
        t_{\mcal G}\leq
        \min\bcbra{t:t\geq\left\lceil
            \sqrt{\frac{6\theta+4\theta\bar L_{\gamma}\brbra t/\mu}{2\theta-1}}
            \right\rceil}.
    \end{equation}
    Here $\tilde L_{\gamma,t}$ is defined in~\eqref{eq:defn_L_tilde_phi}, $\mcal E(t)$ is defined after~\eqref{eq:L_phi_N}, and $0/0=0$.
\end{proposition}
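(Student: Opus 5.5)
The plan mirrors the proofs of Theorem~\ref{thm:ideal_papex} and Proposition~\ref{prop:APWG_W_cer_gen_complexity}, specialised to $\tilde l=\phi(\bar y;\gamma)-\theta\Delta$ and $\underline\phi=\phi(\bar y;\gamma)-\Delta$.

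\textbf{Residual sandwich before termination.} For $1\le t\le t_{\mcal G}-1$ the algorithm has not returned, so two things hold. All subproblems were feasible, and $\norm{x^{t+1}-\bar y}^2\le 2\theta\Delta/\mu$. Also $\phi(\hat x^{t+1};\gamma)-\underline\phi>\theta\Delta$, which means $\phi(\hat x^{t+1};\gamma)-\tilde l>(2\theta-1)\Delta$. On the other side, $\hat x^1=\bar y$ and the incumbent values are nonincreasing, so $\phi(\hat x^{t};\gamma)-\tilde l\le\theta\Delta$. Since $\theta>1/2$, the residual stays in $\bigl((2\theta-1)\Delta,\theta\Delta\bigr]$. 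In particular it is positive, as required by Proposition~\ref{prop:property_papex} and Proposition~\ref{prop:upper_bound_Lip}.

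\textbf{Bound on $\bar L_\gamma(t_{\mcal G}-1)$.} I would reproduce the argument for~\eqref{eq:upper_L_phi_s_t_s}. First use $\omega_t\alpha_t^2/2\le4$. On event iterations $\mcal E(t)$, the definition of $L_{\gamma,t}$ and monotonicity give
\[
\tfrac{\alpha_t}{4}\bigl(\phi(\hat x^t;\gamma)-\tilde l\bigr)\le \tfrac{\alpha_t^2}{2}L_{\gamma,t}\norm{x^{t,r_t}-x^{t,l_t}}^2\le\tfrac{3\alpha_t}{4}\bigl(\phi(\hat x^t;\gamma)-\tilde l\bigr).
\]
With $\alpha_t=4/(t+3)$ and the sandwich above, $L_{\gamma,t}\norm{x^{t,r_t}-x^{t,l_t}}^2$ lies between constant multiples of $(t+3)(2\theta-1)\Delta$ and $(t+3)\theta\Delta$. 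This gives an upper bound on the numerator. It also gives a lower bound $\norm{x^{t,r_t}-x^{t,l_t}}^2\gtrsim (t+3)(2\theta-1)\Delta/L_{\gamma,t}$ on the denominator terms, after dropping the non-event terms. Replacing $L_{\gamma,t}$ by $2\tilde L_{\gamma,t}$ (Proposition~\ref{prop:upper_bound_Lip}) yields the weighted-harmonic form with ratio $\theta/(2\theta-1)$ in place of $1/\theta$. The constant $24$ comes out as in Theorem~\ref{thm:ideal_papex}.

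\textbf{Iteration bound.} Take $t=t_{\mcal G}-1\ge2$ (the case $t_{\mcal G}\le 2$ is trivial). Proposition~\ref{prop:property_papex}, with $\omega_1(1-\alpha_1/2)=3$, gives
\[
\tfrac12(t+2)(t+3)\bigl(\phi(\hat x^{t+1};\gamma)-\tilde l\bigr)-3\theta\Delta\le\bar L_\gamma(t)\,\tfrac{2\theta\Delta}{\mu}.
\]
Inserting the lower bound $(2\theta-1)\Delta$ and using $(t+2)(t+3)>2t^2$ gives $t^2<(6\theta+4\theta\bar L_\gamma(t)/\mu)/(2\theta-1)$. Hence any $t$ violating this cannot be a completed nonterminating iteration, and the claimed minimum bound on $t_{\mcal G}$ follows. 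Note that unlike Proposition~\ref{prop:APWG_W_cer_gen_complexity}, $\Delta$ cancels here because the radius threshold scales with $\Delta/\mu$.

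\textbf{Main obstacle.} The delicate point is the constant bookkeeping in the harmonic-mean step, since the residual lower bound is now $(2\theta-1)\Delta$ rather than $\theta\Delta$. I would carefully track which residual, $\phi(\hat x^t;\gamma)$ versus $\phi(\hat x^{t+1};\gamma)$, is bounded below at each event iteration. In particular I would check whether $t=1$ needs $\phi(\hat x^1;\gamma)-\tilde l=\theta\Delta$ directly. This is the point where the factor $\theta/(2\theta-1)$ arises.
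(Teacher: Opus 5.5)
Your proposal is correct and follows the paper's proof essentially step by step: the residual sandwich $(2\theta-1)\Delta<\phi(\hat x^{t+1};\gamma)-\tilde l\le\theta\Delta$, the weighted-average argument behind \eqref{eq:upper_L_phi_s_t_s} with $L_{\gamma,t}\le 2\tilde L_{\gamma,t}$ (which yields the constant $24\theta/(2\theta-1)$), and Proposition~\ref{prop:property_papex} combined with the failed radius test for the iteration bound. One small slip: $(t+2)(t+3)>2t^2$ is false for $t\ge 6$, but you only need $(t+2)(t+3)>t^2$, and that gives exactly the inequality $t^2<(6\theta+4\theta\bar L_\gamma(t)/\mu)/(2\theta-1)$ that you state.
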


\noindent\emph{Proof deferred.} The detailed proof is collected in Subsection~\ref{subsec:rpapex-detailed-proofs}.

The two preceding propositions (Proposition~\ref{prop:convergence_rapex_gap_reduction} and Proposition~\ref{prop:APWG_W_cer_gen_complexity}) give comparable single-call iteration costs for
$\pagr$ and $\pawg$. For $\pawg$ with
$\iota_{\max}^{2}=2(1+\beta)\Delta/\mu$, both bounds reduce to
$O(1)\sqrt{(L_f+\gamma L_{\mcal G})/\mu}$ under the bundle-size condition
$\bundleSize\geq k_f + k_{\mcal G}$. Their outputs play
different roles. The routine $\pawg$ attempts to generate a $\PWcer$; if it
fails, Proposition~\ref{prop:APWG_W_cer_gen_complexity} shows that the current parameter pair must be
invalid. In contrast, $\pagr$ reduces the gap by improving either the
upper or lower bound, and it leads to parameter adjustment only when the
resulting bounds are inconsistent. The next proposition formalizes these
adjustment events and bounds their number.

%
%


%
\begin{proposition}
    \label{prop:flag_false}
    If Algorithm~\ref{alg:rAPEXC} enters Line~\ref{enu:adjust_parameter1},
    then the current pair $(\hat{\gamma}_{s},\hat{\mu}_{s})$ satisfies
    $\hat{\mu}_{s}>\mu^{*}$ or $\hat{\gamma}_{s}<2\norm{\lambda^*}_1$. Consequently,
    Algorithm~\ref{alg:rAPEXC} enters Line~\ref{enu:adjust_parameter1} at most $\bar{N}=\guessTime\brbra{\guessTime+3}/2$ times, where
    \begin{equation}\label{eq:defmcalT}
        \guessTime:=\max\bcbra{\left\lceil \log_{4}\frac{\hat{\mu}_{1}}{\mu^{*}}\right\rceil ,0}
        +\max\bcbra{\left\lceil \log_{4}\frac{2\norm{\lambda^*}_1}{\hat{\gamma}_{1}}\right\rceil ,0}\ .
    \end{equation}
    Moreover, for every $s\geq 1$, we have $\hat{\mu}_{s}\ge \hat{\mu}_{1}/4^{\guessTime}$ and
    $\hat{\gamma}_{s}\le \hat{\gamma}_{1}\cdot4^{\guessTime}$.
\end{proposition}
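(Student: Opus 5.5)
We prove the contrapositive of the first claim: assume $\hat\gamma_s\geq 2\norm{\lambda^*}_1$ and $\hat\mu_s\leq\mu^*$, and show that Line~\ref{enu:adjust_parameter1} is never reached. The rejection has two possible triggers. The first is $\phi(\hat x^s;\hat\gamma_s)<\phi(\bar y^s;\hat\gamma_s)-\tilde\Delta_s$. The second is $\paramflag=\false$ returned by $\pawg$. Both are excluded once we know the gap-estimate invariant
\[
\phi(\bar y^s;\hat\gamma_s)-f^*\leq\tilde\Delta_s .
\]
Indeed, since $\hat\gamma_s\geq2\norm{\lambda^*}_1$, exact penalization gives $\phi(x;\hat\gamma_s)\geq f^*$ on $X$. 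This rules out the first trigger. Proposition~\ref{prop:APWG_W_cer_gen_complexity} then forces $\pawg$ to return $\true$, which rules out the second.

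The main work is therefore the invariant, proved by induction over all updates of $\tilde\Delta_s$. There are three kinds of update.

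\emph{Initialization.} QG of $\phi(\cdot;\hat\gamma_1)$ with modulus at least $\hat\mu_1$ (under the assumed parameter regime, monotonicity in $\gamma$ and $\mu$) combines with convexity, $\phi(\bar y)-\phi^*\leq\norm{\phi'}\dist\leq\norm{\phi'}\sqrt{2(\phi(\bar y)-\phi^*)/\mu}$. This gives $\phi(\bar y)-f^*\leq 2\norm{\phi'}^2/\mu$. We also need $\phi(\cdot;\gamma)$ to be $\mu^*$-QG for every $\gamma\geq2\norm{\lambda^*}_1$; this holds because $\phi$ is monotone in $\gamma$ and has the same minimizers and minimum $f^*$.

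\emph{Accepted stage.} Here $\tilde\Delta_{s+1}=\phi(\bar y^{s+1})-\underline\phi$, and we must show $\underline\phi\leq f^*$. If $\lowerflag=\false$, then $\underline\phi=\phi(\bar y^s)-\tilde\Delta_s\leq f^*$ by the induction hypothesis. If $\lowerflag=\true$, we use the certificate that $\pagr$ produces. Exactly as in Proposition~\ref{prop:APWG_W_cer_gen_complexity}, via Lemma~\ref{lem:lower_bound_of_ball}, it is a $(\iota,\nu,\hat\gamma_s)$-$\PWcer$ with $\iota=\sqrt{2\theta\tilde\Delta_s/\hat\mu_s}$ and $\iota\nu\leq\theta\tilde\Delta_s$. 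Proposition~\ref{prop:QG_for_lower_bound} then gives $\phi(\bar y^s)-f^*\leq\max\{\iota\nu,2\nu^2/\hat\mu_s\}=\theta\tilde\Delta_s$. The infeasible-flag case is the trivial one: the level $\phi(\bar y^s)-\theta\tilde\Delta_s$ lies below $\min\phi$.

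\emph{Adjustment step.} Here the pair is changed, so the induction must also cover updates made inside $\adjustPara$ when the new pair satisfies the hypotheses. Both terms in $\underline\phi$ at Line~\ref{enu:lower_bound_compute} are valid lower bounds on $f^*$. The certificate term is valid by Proposition~\ref{prop:CW_certificate_transfer} followed by Proposition~\ref{prop:QG_for_lower_bound}, applied with $\phi(\cdot;\max\{\gamma_c,\hat\gamma_{\text{new}}\})$, which is $\mu^*\geq\hat\mu_{\text{new}}$-QG. The subgradient term is valid by the initialization estimate.

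\textbf{Main obstacle.} I expect the delicate point to be the placeholder $\Cert$ with $\nu=+\infty$, together with the fact that induction only needs to hold from the first valid pair onward. Once the pair is valid, every parameter update is either an acceptance or an adjustment with a valid new pair, and in both cases the invariant holds.

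\textbf{Counting.} The search order in $\adjustPara$ enumerates pairs $(\hat\gamma_1 4^{a},\hat\mu_1 4^{-b})$ diagonal by diagonal, with $a+b=d$ on diagonal $d$. On this diagonal, $\hat\gamma$ decreases and $\hat\mu$ decreases together, so $\hat\gamma/\hat\mu=\hat\gamma_1 4^{d}/\hat\mu_1$ is constant. Set $a^*=\max\{\lceil\log_4(2\norm{\lambda^*}_1/\hat\gamma_1)\rceil,0\}$ and $b^*=\max\{\lceil\log_4(\hat\mu_1/\mu^*)\rceil,0\}$. The pair $(a^*,b^*)$ is valid and lies on diagonal $\guessTime=a^*+b^*$. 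Since adjustments occur only at invalid pairs, the search never moves past a valid pair. The number of pairs visited before reaching it is at most $\sum_{d=0}^{\guessTime-1}(d+1)+\guessTime=\guessTime(\guessTime+3)/2$. Every visited pair has $a,b\leq\guessTime$, which yields the bounds $\hat\mu_s\geq\hat\mu_1 4^{-\guessTime}$ and $\hat\gamma_s\leq\hat\gamma_14^{\guessTime}$.
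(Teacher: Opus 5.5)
Your proposal is correct and follows essentially the same route as the paper. You prove the contrapositive by showing that, at a valid pair, the gap invariant $\phi(\bar y^s;\hat\gamma_s)-f^*\le\tilde\Delta_s$ holds; together with exact penalization and Proposition~\ref{prop:APWG_W_cer_gen_complexity}, this excludes both rejection triggers, and you then count the diagonal enumeration of $\adjustPara$ up to the first valid pair on diagonal $\guessTime$. The differences are minor: you re-derive the invariant by explicit induction where the paper cites Lemmas~\ref{lem:lower_bound_valid_alg} and~\ref{lem:tilde_Delta_new_proper}, and your sum $\sum_{d=0}^{\guessTime-1}(d+1)+\guessTime$ gives the same bound $\guessTime(\guessTime+3)/2$ as the paper's $\sum_{i=1}^{\guessTime}(i+1)$.
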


\noindent\emph{Proof deferred.} The detailed proof is collected in Subsection~\ref{subsec:rpapex-detailed-proofs}.

%

%

Proposition~\ref{prop:flag_false} is the two-parameter analogue of the
one-parameter guess-and-check result of~\cite{partOne}. That result counts only
reductions in the QG estimate, whereas $\guessTime$ in~\eqref{eq:defmcalT}
accounts for the search over both $\hat{\mu}_s$ and $\hat{\gamma}_s$.

To state the complexity theorem, we first associate an empirical Lipschitz
average with each subroutine execution. Consider any execution $\mcal C$ of
$\pagr$ or ${\pawg}$. Its current parameter pair can be written as
\[
    (\hat{\mu}_s,\hat{\gamma}_s)
    =\brbra{\hat{\mu}_{1}/4^j,\hat{\gamma}_{1}4^{i-j}},
    \qquad 0\leq j\leq i\leq \guessTime .
\]
To compare empirical Lipschitz constants obtained with different penalty
parameters, we normalize each by the factor $\hat{\gamma}_1/\hat{\gamma}_s$,
which leads to the following definition.

\begin{defn}[Penalty-rescaled average empirical Lipschitz constant]
    \label{defn:scaled_empirical_lip_avg}
    For a target accuracy $\delta>0$, let
    $\mathcal S_\delta$ collect all completed calls $\mcal C$ to $\pagr$ or ${\pawg}$ before
    the first time at which Algorithm~\ref{alg:rAPEXC}
    stores a certificate guaranteeing a
    $(\delta/\hat\mu_s,\delta,\hat\gamma_s)$-Penalty W-certificate. For each such call $\mcal C$, let $\hat\gamma_{\mcal C}$ be its penalty parameter,
    let
    $\bar L^{\mcal C}_{\hat\gamma_{\mcal C}}(N_{\mcal C})$ be the corresponding empirical Lipschitz average, where $(N_{\mcal C}+1)$ is the number of iterations for execution $\mcal C$.
    Define
    \begin{equation}\label{eq:def_L_avg}
        L_{\rm avg}(\delta):=
        \begin{cases}
            \max_{\mcal C\in\mathcal S_\delta}
            \frac{\hat\gamma_1}{\hat\gamma_{\mcal C}}
            \bar L^{\mcal C}_{\hat\gamma_{\mcal C}}(N_{\mcal C}),
               & \mathcal S_\delta\neq\emptyset, \\[1ex]
            0, & \mathcal S_\delta=\emptyset .
        \end{cases}
    \end{equation}

\end{defn}
Two remarks clarify this definition. First, the rescaling
in~\eqref{eq:def_L_avg} is applied to each execution before taking the maximum.
Indeed,
$\hat{\gamma}_1/\hat{\gamma}_{\mcal C}$ normalizes the
empirical Lipschitz average for $\phi(\cdot;\hat{\gamma}_{\mcal C})$ to the scale
of the initial penalty function $\phi(\cdot;\hat{\gamma}_1)$. Second, for each execution $\mcal C$, the
corresponding subroutine guarantee,
\eqref{eq:weighted_harmonic_mean_APWG} or~\eqref{eq:res_PAGR_converge}, gives
the bound
\[
    \bar L_{\hat{\gamma}_{\mcal C}}^{\mcal C}(N_{\mcal C})
    \leq \max\Bcbra{\frac{24(1+\beta)}{\beta},
        \frac{24\theta}{2\theta-1}}
    \frac{\sum_{t=1}^{N_{\mcal C}}\onebf_{\mcal E_{\mcal C}(t)}(t+3)}
    {\sum_{t=1}^{N_{\mcal C}}\brbra{\onebf_{\mcal E_{\mcal C}(t)}(t+3)/
        \tilde L_{\hat{\gamma}_{\mcal C},t}^{\mcal C}}},
\]
where $\tilde L_{\hat{\gamma}_{\mcal C},t}^{\mcal C}$ and $\mcal E_{\mcal C}(t)$ denote the
corresponding quantities in the subroutine execution $\mcal C$, and the
convention $0/0=0$ is used.
When the event set is nonempty, the right-hand side is a constant multiple of a
weighted harmonic mean of the local empirical Lipschitz constants over the
indices satisfying $\mcal E_{\mcal C}(t)$. The corresponding counting result
implies that, for long subroutine runs, the participation ratio of this harmonic mean,
namely the fraction of indices satisfying $\mcal E_{\mcal C}(t)$, approaches
$\sqrt{\theta}$; see the second remark following Theorem~\ref{thm:ideal_papex}.

Now, we are ready to present the main complexity results.
Theorem~\ref{thm:convergence_CW_certificate} bounds the oracle cost until the
record $\Cert$ guarantees the existence of a point set forming an
$(\vep/\hat{\mu}_s,\vep,\hat{\gamma}_s)$-$\PWcer$.
Theorem~\ref{thm:convergence_opt_convio} and
Corollary~\ref{cor:convergence_opt_convio_small_mu} then establish
guarantees for $\vep$-optimality and $\vep$-constraint violation under two
different initial parameter settings.
\begin{thm}
    \label{thm:convergence_CW_certificate}
    In Algorithm~\ref{alg:rAPEXC}, suppose the initial parameter estimate
    $(\hat{\mu}_1,\hat{\gamma}_1)$ satisfies $\hat{\mu}_1 > \mu^*$ or
    $\hat{\gamma}_1 < 2\norm{\lambda^*}_1$. Then, up to the first time at which the
    certificate stored in $\Cert$ guarantees the existence of a point set
    $\mcal P_s$ that forms an
    $\brbra{\vep/\hat{\mu}_s,\vep,\hat{\gamma}_s}$-$\PWcer$ for $\bar{y}^{s}$ in
    problem~\eqref{eq:initial_FP}, the total number of first-order oracle calls is at most
    \begin{equation}\label{eq:complexity_rapexc_lavg}
        O(1) \cdot \bundleSize \cdot \max\bcbra{2^{\guessTime}\sqrt{\frac{L_{\text{avg}}}{\hat{\mu}_{1}}},1}
        \left(
        \guessTime + \bsbra{\log_{\tfrac{1}{\theta}} \frac{ \norm{f^{\prime}\brbra{\bar{y}^{0}}}^{2}+\hat{\gamma}_1^2\norm{\mcal G^{\prime}\brbra{\bar{y}^{0}}}^{2} }{ \vep^{2} }}_+
        \right),
    \end{equation}
    where $\guessTime$ is defined in~\eqref{eq:defmcalT}, and
    $L_{\text{avg}}=L_{\text{avg}}(\vep)$ is defined in~\eqref{eq:def_L_avg}.
    Furthermore, if $\bundleSize\geq k_f + k_{\mcal G}$, then
    $L_{\text{avg}}\leq O(1)(L_f+ {\hat{\gamma}}_{1} L_{\mcal G})$, and the preceding
    bound becomes
    \begin{equation}\label{eq:complexity_rapexc}
        O(1) \cdot \bundleSize \cdot \max\bcbra{2^{\guessTime} \sqrt{\frac{L_{f}}{\hat{\mu}_{1}} + \frac{\hat{\gamma}_{1}}{\hat{\mu}_{1}} L_{\mcal G} },1}
        \left(
        \guessTime + \bsbra{\log_{\tfrac{1}{\theta}} \frac{ \norm{f^{\prime}\brbra{\bar{y}^{0}}}^{2}+\hat{\gamma}_1^2\norm{\mcal G^{\prime}\brbra{\bar{y}^{0}}}^{2} }{ \vep^{2} }}_+
        \right),
    \end{equation}
\end{thm}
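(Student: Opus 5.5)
We plan to bound the oracle cost by decomposing the run of Algorithm~\ref{alg:rAPEXC} into two kinds of stages. \emph{Rejection stages} end at Line~\ref{enu:adjust_parameter1}. \emph{Accepted stages} reach Line~\ref{enu:store_certificate}. Every stage consists of at most one call to $\pagr$ and at most one call to $\pawg$, each costing $\bundleSize$ oracle calls per outer iteration. By Proposition~\ref{prop:convergence_rapex_gap_reduction} and Proposition~\ref{prop:APWG_W_cer_gen_complexity} (with $\iota_{\max}^2=2(1+\beta)\tilde\Delta_s/\hat\mu_s$), a single call with pair $(\hat\gamma_s,\hat\mu_s)$ uses $O(1)\max\{\sqrt{\bar L^{\mcal C}_{\hat\gamma_s}/\hat\mu_s},1\}$ iterations.

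We then rescale. Write $(\hat\mu_s,\hat\gamma_s)=(\hat\mu_1/4^j,\hat\gamma_1 4^{i-j})$ with $0\le j\le i\le\guessTime$, which is possible by Proposition~\ref{prop:flag_false}. Then
\[
\frac{\bar L^{\mcal C}_{\hat\gamma_s}}{\hat\mu_s}
=\frac{\hat\gamma_s}{\hat\gamma_1}\cdot\frac{\hat\gamma_1}{\hat\gamma_s}\bar L^{\mcal C}_{\hat\gamma_s}\cdot\frac{4^j}{\hat\mu_1}
\le 4^{i}\,\frac{L_{\rm avg}(\vep)}{\hat\mu_1}
\le 4^{\guessTime}\frac{L_{\rm avg}}{\hat\mu_1}.
\]
So each call costs $O(1)\bundleSize\max\{2^{\guessTime}\sqrt{L_{\rm avg}/\hat\mu_1},1\}$. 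It therefore remains to count stages: we need to show the total number of calls is $O(\guessTime+[\log_{1/\theta}(\cdot)]_+)$.

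Rejection stages are counted using Proposition~\ref{prop:flag_false}. Each diagonal of the search in Figure~\ref{fig:Parameter-Adjustment-Strategy} is walked once. The crude bound $\bar N=\guessTime(\guessTime+3)/2$ is too large for the stated linear $\guessTime$ term. The fix is to note that along diagonal $i$ the condition-number factor is $4^{i}$, so the cost of rejections is a geometric sum: summing $2^{i}$ over the $O(i)$ steps of diagonal $i$ and over $i\le\guessTime$ gives $O(\guessTime 2^{\guessTime})$, which is absorbed into $2^{\guessTime}\cdot\guessTime$. If that absorption turns out to be too loose, we will accept a polynomial-in-$\guessTime$ factor hidden in $O(1)$ as the paper's $\guessTime$ term.

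Accepted stages are counted through the gap estimate. When the stage is accepted with $\lowerflag=\true$, we have $\tilde\Delta_{s+1}\le\theta\tilde\Delta_s$, because $\phi(\bar y^{s+1})\le\phi(\bar y^s)$ and the lower bound rises by $(1-\theta)\tilde\Delta_s$. When $\lowerflag=\false$, $\pagr$ returned via $\phi(\hat x^s)-\underline\phi\le\theta\tilde\Delta_s$, so again $\tilde\Delta_{s+1}\le\theta\tilde\Delta_s$.

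After an adjustment, $\tilde\Delta$ is recomputed in $\adjustPara$ from a valid lower bound, using Proposition~\ref{prop:CW_certificate_transfer} and Proposition~\ref{prop:QG_for_lower_bound}. Because it includes the subgradient term $2\norm{\phi'}^2/\hat\mu_{\rm new}$ evaluated at $\bar y^0$, it is bounded by $O(4^{\guessTime})(\norm{f'(\bar y^0)}^2+\hat\gamma_1^2\norm{\mcal G'(\bar y^0)}^2)/\hat\mu_1$ up to the $\hat\gamma$ rescaling. Each adjustment thus can increase $\tilde\Delta$ only to this level, at a cost of $O(\guessTime)$ extra $\log_{1/\theta}$ stages.

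Finally, the stored certificate at an accepted stage is a $(\sqrt{2c\tilde\Delta_s/\hat\mu_s},\sqrt{c\hat\mu_s\tilde\Delta_s/2},\hat\gamma_s)$-$\PWcer$ with $c\in\{\theta,1+\beta\}$. This follows from Proposition~\ref{prop:APWG_W_cer_gen_complexity} for the $\pawg$ branch, and from the analogous radius and infeasibility argument via Lemma~\ref{lem:lower_bound_of_ball} for the $\pagr$ branch. It becomes a $(\vep/\hat\mu_s,\vep,\hat\gamma_s)$-certificate once $\tilde\Delta_s\lesssim\vep^2/\hat\mu_s$; this is where monotonicity of $\mcal V^+$ (Lemma~\ref{lem:V_plus_non_increasing}) lets us adjust the radius.

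Starting from $\tilde\Delta_1=2\norm{\phi'(\bar y^0;\hat\gamma_1)}^2/\hat\mu_1$, the number of accepted stages is therefore $[\log_{1/\theta}((\norm{f'}^2+\hat\gamma_1^2\norm{\mcal G'}^2)/\vep^2)]_+ +O(\guessTime)$. Multiplying this count by the per-call cost gives \eqref{eq:complexity_rapexc_lavg}.

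The second bound \eqref{eq:complexity_rapexc} follows from Proposition~\ref{prop:upper_bound_Lip}. That result gives $\bar L^{\mcal C}\le O(1)(L_f+\hat\gamma_{\mcal C}L_{\mcal G})$, and multiplying by $\hat\gamma_1/\hat\gamma_{\mcal C}\le1$ yields $O(1)(L_f+\hat\gamma_1L_{\mcal G})$.

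The main obstacle is the accounting of $\tilde\Delta$ across adjustments. We must show that $\adjustPara$ never inflates the gap beyond a $4^{O(\guessTime)}$ multiple of the initial scale, and that the transferred lower bound stays valid even for invalid pairs. We will first try to bound $\tilde\Delta_{\rm new}$ using only the $\bar y^0$ subgradient term.
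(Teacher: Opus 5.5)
Several of your ingredients agree with the paper's proof:
\begin{itemize}
\item the per-call rescaling, with cost $O(1)\bundleSize\max\{2^{i}\sqrt{L_{\rm avg}/\hat\mu_1},1\}$ at pair $(j,i)$;
\item the $\theta$-contraction of $\tilde\Delta$ at every accepted stage;
\item the sufficient condition $\tilde\Delta_s\lesssim\vep^2/\hat\mu_s$, using monotonicity of $\mcal V^+$ to shrink the radius;
\item the geometric sum over rejection-only stages;
\item the reduction of \eqref{eq:complexity_rapexc} to Proposition~\ref{prop:upper_bound_Lip}.
\end{itemize}

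The gap is in counting accepted stages across parameter adjustments. The bound you plan to use, $\tilde\Delta_{\rm new}\le 2\norm{\phi'(\bar y^0;\hat\gamma_{\rm new})}^2/\hat\mu_{\rm new}$, is not the real obstacle. It holds unconditionally, because $\bar y^0$ is in the set over which $\hat y$ is chosen, so no validity of the pair is needed. But it is an \emph{absolute} cap, not a bound relative to $\tilde\Delta$ just before the adjustment.

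Proposition~\ref{prop:flag_false} says only that a rejection implies an invalid pair. An invalid pair can still complete many accepted stages and drive $\tilde\Delta$ close to the target $\vep^2/(2(1+\beta)\hat\mu_s)$ before it is rejected. Resetting to the gradient level then costs a full $\log_{1/\theta}(\norm{\phi'(\bar y^0;\hat\gamma_{\rm new})}^2/\vep^2)$ further accepted stages, not $O(\guessTime)$. Write $\Gamma_\vep:=\bsbra{\log_{1/\theta}\brbra{(\norm{f'(\bar y^0)}^2+\hat\gamma_1^2\norm{\mcal G'(\bar y^0)}^2)/\vep^2}}_+$. With up to $\guessTime(\guessTime+3)/2$ adjustments, your accounting yields at best $\sum_{i\le\guessTime}(i+1)2^{i}(\guessTime+\Gamma_\vep)=O(\guessTime\,2^{\guessTime})(\guessTime+\Gamma_\vep)$, in units of $\bundleSize\sqrt{L_{\rm avg}/\hat\mu_1}$. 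That is a product $\guessTime\,\Gamma_\vep$, not the additive $\guessTime+\Gamma_\vep$ in \eqref{eq:complexity_rapexc_lavg}, and your uniform per-call cost makes it worse. You also cannot absorb powers of $\guessTime$ into $O(1)$, since $\guessTime$ depends on $\hat\mu_1/\mu^*$ and $\norm{\lambda^*}_1$.

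The missing idea is the \emph{relative} bound from certificate transfer, Lemma~\ref{lem:upper_bound_transfer}, applied along each search diagonal.
\begin{itemize}
\item On diagonal $i$ the penalty estimate $\hat\gamma_1 4^{i-j}$ decreases as $j$ increases. So when the certificate stored at a certified pair $(J_i(q),i)$ is transferred to a later pair on the same diagonal, $\eta=0$.
\item The gap therefore grows by at most $\mu_c/\hat\mu_{\rm new}=4^{J_i(q+1)-J_i(q)}$, up to an $O(1)$ factor from $c\in\{\theta,1+\beta\}$. These factors telescope to $4^{i}$ over the whole diagonal.
\item Hence the full logarithm is paid only once per diagonal, at its first certified pair, through the gradient term in \adjustPara. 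Diagonal $i$ contributes $O(i+\Gamma_\vep)$ accepted stages, each costing $O(1)2^{i-\guessTime}\kappa_+$ with $\kappa_+=\bundleSize\,2^{\guessTime}\sqrt{L_{\rm avg}/\hat\mu_1}$.
\item Summing with $\sum_i 2^{i-\guessTime}=O(1)$ and $\sum_i i\,2^{i-\guessTime}=O(\guessTime)$ gives $O(\guessTime+\Gamma_\vep)\kappa_+$. This is the paper's $S_4$ term.
\end{itemize}
The level-dependent weighting is needed for accepted stages as well, not only for rejections. Telescoping alone still allows $O(\guessTime^2+\guessTime\,\Gamma_\vep)$ accepted stages in total; it is the factor $2^{i-\guessTime}$ that makes the logarithm additive.
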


\noindent\emph{Proof deferred.} The detailed proof is collected in Subsection~\ref{subsec:rpapex-detailed-proofs}.

The preceding Theorem~\ref{thm:convergence_CW_certificate} gives the complexity of obtaining an
$(\vep/\hat{\mu}_s,\vep,\hat{\gamma}_s)$-$\PWcer$, which can be verified during the
algorithm. We next use this result to derive a complexity bound for the
objective gap and constraint violation.
\begin{thm}
    \label{thm:convergence_opt_convio}
    In Algorithm~\ref{alg:rAPEXC}, suppose that $\hat{\mu}_{1}>\mu^{*}$ and
    set the initial penalty estimate to $\hat{\gamma}_{1}=2$. Let
    $\delta:=\sqrt{\frac{\mu^{*}\vep}{16\norm{\lambda^*}_1(2\norm{\lambda^*}_1-1)}}$.
    Then Algorithm~\ref{alg:rAPEXC} produces a point $\bar{y}^{s}$ such that
    \begin{equation}\label{eq:epsilon_condition}
        \max\bcbra{f\brbra{\bar{y}^{s}}-f^{*}, \bsbra{\mcal G\brbra{\bar{y}^{s}}}_{+}}\leq\vep
    \end{equation}
    after at most
    \begin{equation}\label{eq:num_grad_oracle_lavg}
        O(1)\,\bundleSize\cdot\max\bcbra{\sqrt{\frac{\norm{\lambda^*}_1L_{\text{avg}}}{\mu^{*}}},1}
        \brbra{\guessTime+\bsbra{\log_{\tfrac{1}{\theta}}
                \frac{2\norm{\lambda^*}_1
                    \brbra{\norm{f^{\prime}\brbra{\bar{y}^{0}}}^{2}
                        +\norm{\mcal G^{\prime}\brbra{\bar{y}^{0}}}^{2}}}
                {\vep}}_+}
    \end{equation}
    first-order oracle calls, where
    $\guessTime:=\left\lceil \log_{4}\frac{\hat{\mu}_{1}}{\mu^{*}}\right\rceil
        +\left\lceil \log_{4}\norm{\lambda^*}_1\right\rceil$ and $L_{\text{avg}} = L_{\text{avg}}(\delta)$.
    Furthermore, if $\bundleSize\geq k_f + k_{\mcal G}$, then
    $L_{\text{avg}}\leq O(1)(L_f+2 L_{\mcal G})$, and the preceding bound becomes
    \begin{equation}\label{eq:num_grad_oracle}
        O(1)\,\bundleSize\cdot\max\bcbra{\sqrt{\frac{\norm{\lambda^*}_1}{\mu^{*}}(L_{f}+{2}L_{\mcal G})},1}
        \brbra{\guessTime+\bsbra{\log_{\tfrac{1}{\theta}}
                \frac{2\norm{\lambda^*}_1
                    \brbra{\norm{f^{\prime}\brbra{\bar{y}^{0}}}^{2}
                        +\norm{\mcal G^{\prime}\brbra{\bar{y}^{0}}}^{2}}}
                {\vep}}_+}
    \end{equation}
    first-order oracle calls.
\end{thm}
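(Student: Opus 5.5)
The plan is to reduce Theorem~\ref{thm:convergence_opt_convio} to Theorem~\ref{thm:convergence_CW_certificate} applied with accuracy $\delta$. Then convert the stored certificate into bounds on the objective gap and the constraint violation, using Proposition~\ref{prop:QG_for_lower_bound} and the constraint part of Definition~\ref{def:CWcer}.

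\textbf{Step 1 (parameters).} With $\hat\gamma_1=2$ and $\hat\mu_1>\mu^*$, Proposition~\ref{prop:flag_false} gives
\[
\guessTime=\lceil\log_4(\hat\mu_1/\mu^*)\rceil+\lceil\log_4\norm{\lambda^*}_1\rceil,
\]
which matches the statement. At the first time $\Cert$ certifies a $(\delta/\hat\mu_s,\delta,\hat\gamma_s)$-$\PWcer$ for $\bar y^s$, the current pair satisfies
\[
\hat\mu_s\ge\hat\mu_1/4^{\guessTime}\quad\text{and}\quad \hat\gamma_s\le 2\cdot4^{\guessTime}.
\]
I want a pair where $\phi(\cdot;\hat\gamma_s)$ is $\hat\mu_s$-QG, or at least one with $\hat\gamma_s$ comparable to $\norm{\lambda^*}_1$ and $\hat\mu_s$ comparable to $\mu^*$. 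The search diagonal of $\adjustPara$ keeps the product $\hat\gamma_s/\hat\mu_s$ controlled. Once $\hat\gamma_s\ge 2\norm{\lambda^*}_1$, the penalty $\phi(\cdot;\hat\gamma_s)\ge\phi(\cdot;2\norm{\lambda^*}_1)$ has the same minimizers $X^*$ and minimum $f^*$, so it is $\mu^*$-QG.

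\textbf{Step 2 (certificate to gap).} Transfer the certificate with Proposition~\ref{prop:CW_certificate_transfer} to $\tilde\gamma=\max\{\hat\gamma_s,2\norm{\lambda^*}_1\}$. This gives
\[
\tilde\nu\le(1+2\tilde\gamma/\hat\gamma_s)\delta\lesssim\norm{\lambda^*}_1\delta .
\]
Proposition~\ref{prop:QG_for_lower_bound} with modulus $\mu^*$ then yields
\[
f(\bar y^s)-f^*\le\phi(\bar y^s;\tilde\gamma)-f^*\le\max\{\iota\tilde\nu,\,2\tilde\nu^2/\mu^*\}.
\]
Here $\iota=\delta/\hat\mu_s$, and $\hat\mu_s\gtrsim\mu^*$ after rejections stop, so both terms are of order $\norm{\lambda^*}_1^2\delta^2/\mu^*$. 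The violation is bounded by $[\mcal G(\bar y^s)]_+\le 2\iota\delta/\hat\gamma_s$, which is at most the same quantity. The definition
\[
\delta^2=\frac{\mu^*\vep}{16\norm{\lambda^*}_1(2\norm{\lambda^*}_1-1)}
\]
is calibrated exactly so these bounds are $\le\vep$. The factor $2\norm{\lambda^*}_1-1$ comes from $\tilde\gamma-\hat\gamma_s\le 2\norm{\lambda^*}_1-2$ in $\tilde\nu$. I will track the constants so that this algebra closes.

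\textbf{Step 3 (complexity).} Plug $\delta$ into \eqref{eq:complexity_rapexc_lavg}. The logarithmic term becomes
\[
\log\frac{\norm{f'}^2+4\norm{\mcal G'}^2}{\delta^2}\approx\log\frac{\norm{\lambda^*}_1^2(\cdots)}{\mu^*\vep}.
\]
The $\log(1/\mu^*)$ and $\log\norm{\lambda^*}_1$ pieces are absorbed into $\guessTime$ and the $O(1)$, which leaves the stated $\log\bigl(2\norm{\lambda^*}_1(\cdots)/\vep\bigr)$. For the leading factor, $4^{\guessTime}\lesssim(\hat\mu_1/\mu^*)\norm{\lambda^*}_1$, so
\[
2^{\guessTime}\sqrt{L_{\rm avg}/\hat\mu_1}\lesssim\sqrt{\norm{\lambda^*}_1L_{\rm avg}/\mu^*}.
\]
The bound for $B\ge k_f+k_{\mcal G}$ follows from $L_{\rm avg}\le O(1)(L_f+2L_{\mcal G})$ in Theorem~\ref{thm:convergence_CW_certificate}.

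\textbf{Main obstacle.} The hardest step is Step 2. It must guarantee that at termination the accepted $\hat\mu_s$ is not much larger than $\mu^*$, while $\hat\gamma_s$ may still be below $2\norm{\lambda^*}_1$. So the argument has to rely on the transfer proposition rather than on QG of $\phi(\cdot;\hat\gamma_s)$ itself. I would first try to use only $\hat\mu_s\ge\hat\mu_1 4^{-\guessTime}\ge\mu^*/(4\norm{\lambda^*}_1)$, absorbing the lost $\norm{\lambda^*}_1$ factor into the $\delta$ definition, and check that the constants still give $\vep$.
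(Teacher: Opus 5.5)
Your plan matches the paper's proof. You apply Theorem~\ref{thm:convergence_CW_certificate} at accuracy $\delta$ and transfer the stored certificate to $2\norm{\lambda^*}_1$ via Proposition~\ref{prop:CW_certificate_transfer}. You then invoke Proposition~\ref{prop:QG_for_lower_bound} with modulus $\mu^*$, which is valid because $\phi(\cdot;\max\{\hat\gamma_s,2\norm{\lambda^*}_1\})$ is $\mu^*$-QG. Using $\hat\gamma_s\ge 2$ gives $\bar\nu\le(2\norm{\lambda^*}_1-1)\delta$, and the violation bound comes from Definition~\ref{def:CWcer}.

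Two corrections are needed.

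First, drop the Step~2 claim that $\hat\mu_s\gtrsim\mu^*$. The pair in force at termination may be an unrejected invalid pair with $\hat\mu_s$ well below $\mu^*$. Only $\hat\mu_s\ge\hat\mu_1/4^{\guessTime}$ is available, as your final paragraph recognizes.

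Second, each of the two ceilings in $\guessTime$ costs a factor of $4$. So $4^{\guessTime}\le 16\hat\mu_1\norm{\lambda^*}_1/\mu^*$, and hence $\hat\mu_s\ge\mu^*/(16\norm{\lambda^*}_1)$, not $\mu^*/(4\norm{\lambda^*}_1)$. This $16$ is exactly the constant $\delta$ is calibrated to. With it the algebra closes with no slack:
\[
\frac{\delta\bar\nu}{\hat\mu_s}\le\frac{16\norm{\lambda^*}_1(2\norm{\lambda^*}_1-1)\delta^2}{\mu^*}=\vep,\qquad \frac{2\bar\nu^2}{\mu^*}\le\vep,\qquad [\mcal G(\bar y^s)]_+\le\frac{\vep}{2\norm{\lambda^*}_1-1}.
\]
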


A few remarks regarding this complexity results in Theorem~\ref{thm:convergence_CW_certificate} and Theorem~\ref{thm:convergence_opt_convio} are in order.

First, the bounds~\eqref{eq:complexity_rapexc_lavg} and~\eqref{eq:num_grad_oracle_lavg}
are stated in terms of the trajectory-dependent quantity $L_{\text{avg}}$ in
\eqref{eq:def_L_avg}. The additional condition
$\bundleSize\geq k_f + k_{\mcal G}$ is only needed to
replace $L_{\text{avg}}$ by the worst-case smoothness bound
$O(1)(L_f+\hat{\gamma}_1 L_{\mcal G})$ in Theorem~\ref{thm:convergence_CW_certificate} or $O(1)(L_f+2L_{\mcal G})$ in Theorem~\ref{thm:convergence_opt_convio}. As discussed after
the remarks following Theorem~\ref{thm:ideal_papex}, the quantity
$\max\bcbra{k_f,\sum_{i\in \activeconstrSet}k_{g_i}}$ is only a worst-case bound on the number
of pieces. In practice, Algorithm~\ref{alg:rAPEXC} often requires far fewer
cuts, since its performance depends more on the pieces encountered along the
trajectory, especially those induced by active constraints along the trajectory. A special case is when the objective and constraints are smooth, so $k_f=k_{g_i}=1$ for all $i\in\activeconstrSet$. In this case, $\bundleSize = {k_f+k_{\mcal G}}$ suffices to ensure $L_{\text{avg}}=O(1)(L_f+\hat{\gamma}_1 L_{\mcal G})$ and the final complexity bound in~\eqref{eq:num_grad_oracle} can be written as
\begin{equation*}
    O(1)\,(\activeConstrNum+1)\cdot\max\bcbra{\sqrt{\frac{\norm{\lambda^*}_1}{\mu^{*}}(L_{f}+{2}L_{\mcal G})},1}
    \brbra{\guessTime+\bsbra{\log_{\tfrac{1}{\theta}}
            \frac{2\norm{\lambda^*}_1
                \brbra{\norm{f^{\prime}\brbra{\bar{y}^{0}}}^{2}
                    +\norm{\mcal G^{\prime}\brbra{\bar{y}^{0}}}^{2}}}
            {\vep}}_+}\ .
\end{equation*}

Second, if the objective function $f$ and each constraint $g_i$ are $L$-smooth,
then after suppressing the dependence on the numbers of constraints and pieces in~\eqref{eq:num_grad_oracle}, the bound
simplifies to
$$
    O(1)\sqrt{\frac{\norm{\lambda^*}_1 L}{\mu^*}}\brbra{\log\frac{1}{\vep}+\left\lceil \log_{4}\frac{\hat{\mu}_{1}}{\mu^{*}}\right\rceil+\left\lceil \log_{4}\norm{\lambda^*}_1\right\rceil}.
$$
This matches the lower bound in~\cite{zhang2022solving} up to the additional logarithmic overhead required by the parameter-free anytime setting. To the best of our knowledge, this is the first almost parameter-free anytime algorithm to achieve such a guarantee.

Third, the initial parameter estimates in Algorithm~\ref{alg:rAPEXC} can be chosen flexibly.
If $\hat{\gamma}_{1} \leq 2\norm{\lambda^*}_1$ and
$\hat{\mu}_{1} \geq \mu^{*}$, then
$\guessTime=\lceil \log_{4}(\hat{\mu}_{1}/\mu^{*})\rceil
    +\lceil \log_{4}(2\norm{\lambda^*}_1/\hat{\gamma}_{1})\rceil$,
and the leading factor in~\eqref{eq:complexity_rapexc} obeys
$2^{\guessTime}\sqrt{L_{f}/\hat{\mu}_{1}+(\hat{\gamma}_{1}/\hat{\mu}_{1})L_{\mcal G}}
    \le O(1)\sqrt{(\norm{\lambda^*}_1/\mu^{*})\brbra{L_{f}/\hat{\gamma}_{1}+L_{\mcal G}}}$.
Hence, a more accurate initial estimate of $\norm{\lambda^*}_1$ improves the leading
condition dependence. In particular, because $\norm{\lambda^*}_1\ge 1$, choosing
$\hat{\gamma}_{1}=2$ in Theorem~\ref{thm:convergence_opt_convio}
automatically ensures $\hat{\gamma}_{1}\le 2\norm{\lambda^*}_1$ and yields, up to
absolute constants, the same dependence
$
    \sqrt{\brbra{\norm{\lambda^*}_1 / \mu^*}(L_f + L_{\mcal G})}.
$
Obtaining an upper bound on $\mu^{*}$ is more subtle. Suppose that a strict feasible
point $\bar{x}$ satisfying $\mcal G(\bar{x})<0$ is available. Then standard
exact-penalty arguments imply
$$2\norm{\lambda^*}_1\leq \bar{\gamma}:=2\max\bcbra{1,\frac{f(\bar{x})-\min_{x\in X}f(x)}{{-\mcal G(\bar{x})}}}.$$ Therefore, $\phi(\cdot;\bar{\gamma})$ is also $\mu^{*}$-QG. Thus, for any
$x_{1},x_{2}$ with $\phi\brbra{x_{1};\bar{\gamma}}>\phi\brbra{x_{2};\bar{\gamma}}$,
we obtain
$\phi\brbra{x_{1};\bar{\gamma}}-\phi\brbra{x_{2};\bar{\gamma}}
    \le \phi\brbra{x_{1};\bar{\gamma}}-\min_{x\in X}\phi\brbra{x;\bar{\gamma}}
    \le 2\norm{\phi^{\prime}\brbra{x_{1};\bar{\gamma}}}^{2}/\mu^{*}$,
so
$\mu^{*}\le
    2\norm{\phi^{\prime}\brbra{x_{1};\bar{\gamma}}}^{2}/
    \brbra{\phi\brbra{x_{1};\bar{\gamma}}-\phi\brbra{x_{2};\bar{\gamma}}}$.
Therefore, once such a $\bar{\gamma}$ is known, two sample points suffice to
produce an explicit upper bound on $\mu^{*}$. In the absence of a strict feasible
point, however, the final complexity naturally depends on the initial choice
of $\hat{\mu}_{1}$, which motivates
Corollary~\ref{cor:convergence_opt_convio_small_mu}.

\begin{corollary}\label{cor:convergence_opt_convio_small_mu}
    In Algorithm~\ref{alg:rAPEXC}, suppose that $\bundleSize\geq k_f + k_{\mcal G}$ and $\hat{\mu}_{1}<\mu^{*}$. If the initial penalty estimate is set to $\hat{\gamma}_{1}=2$, then Algorithm~\ref{alg:rAPEXC} produces a point $\bar{y}^{s}$ such that
    \begin{equation*}
        \max\bcbra{f\brbra{\bar{y}^{s}}-f^{*}, \bsbra{\mcal G\brbra{\bar{y}^{s}}}_{+}}\leq\vep\ ,
    \end{equation*}
    after at most
    \begin{equation}\label{eq:num_grad_oracle_small_mu}
        O(1)\,\bundleSize\sqrt{\frac{\norm{\lambda^*}_1}{\hat{\mu}_{1}}(L_{f}+2L_{\mcal G})}
        \brbra{\guessTime+\bsbra{\log_{\tfrac{1}{\theta}}
                \frac{\norm{f^{\prime}\brbra{\bar{y}^{0}}}^{2}+\norm{\mcal G^{\prime}\brbra{\bar{y}^{0}}}^{2}}{\hat{\mu}_{1}\vep}}_+}
    \end{equation}
    first-order oracle calls, where $\guessTime:=\left\lceil \log_{4}\norm{\lambda^*}_1\right\rceil$.
\end{corollary}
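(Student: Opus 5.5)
The plan is to reuse the machinery behind Theorem~\ref{thm:convergence_opt_convio}, now in the regime $\hat{\mu}_1<\mu^*$. The point is that the $\mu$-direction of the guess-and-check search never has to move. By Proposition~\ref{prop:flag_false}, an adjustment happens only if $\hat{\mu}_s>\mu^*$ or $\hat{\gamma}_s<2\norm{\lambda^*}_1$. In $\adjustPara$ both estimates are quartered along a diagonal, and when the penalty estimate would fall below $\hat\gamma_1$ the search jumps to a new diagonal with $\hat\mu_{\text{new}}=\hat\mu_1$. So every pair visited has the form $(\hat\mu_1/4^j,\hat\gamma_1 4^{i-j})$. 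Once $\hat\gamma_s\ge 2\norm{\lambda^*}_1$ and $\hat\mu_s\le \hat\mu_1<\mu^*$, no rejection can occur. The count of rejections is therefore governed by the $\gamma$-direction alone, which gives $\guessTime=\lceil\log_4\norm{\lambda^*}_1\rceil$ when $\hat\gamma_1=2$.

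A cleaner accounting is to apply Theorem~\ref{thm:convergence_CW_certificate} directly in its $\hat\gamma_1<2\norm{\lambda^*}_1$ case. If $\norm{\lambda^*}_1=1$, nothing is rejected and I would treat this case separately with the same argument, $\guessTime=0$. Along the visited pairs, the worst leading factor is
\[
2^{\guessTime}\sqrt{(L_f+\hat\gamma_1 L_{\mcal G})/\hat\mu_1}\le O(1)\sqrt{\norm{\lambda^*}_1(L_f+2L_{\mcal G})/\hat\mu_1},
\]
using $4^{\guessTime}\le 4\norm{\lambda^*}_1$ and $\hat\gamma_1=2$. Here $L_{\rm avg}\le O(1)(L_f+2L_{\mcal G})$ holds under $\bundleSize\ge k_f+k_{\mcal G}$. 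This yields a stored $(\delta/\hat\mu_s,\delta,\hat\gamma_s)$-$\PWcer$ for $\bar y^s$ at any chosen accuracy $\delta$, with the log term $\log_{1/\theta}\big((\norm{f'(\bar y^0)}^2+4\norm{\mcal G'(\bar y^0)}^2)/\delta^2\big)$.

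Next I convert the certificate into the two target inequalities. I need $\hat\gamma_s\ge 2\norm{\lambda^*}_1$ at the certificate time, or else transfer the certificate via Proposition~\ref{prop:CW_certificate_transfer} to $\gamma=4^{\guessTime}\hat\gamma_1$, at the cost of at most an $O(\norm{\lambda^*}_1)$ factor in $\nu$. With $\iota=\delta/\hat\mu_s$ and $\nu=\delta$:
\begin{itemize}
\item Constraint violation: the certificate gives $[\mcal G(\bar y^s)]_+\le 2\iota\nu/\gamma=2\delta^2/(\hat\mu_s\gamma)\le\delta^2/\hat\mu_s$.
\item Optimality gap: $\phi(\cdot;\gamma)$ is $\mu^*$-QG because $\gamma\ge2\norm{\lambda^*}_1$ and increasing the penalty only increases $\phi$ off the feasible set. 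Since $\hat\mu_s\le\mu^*$, Proposition~\ref{prop:QG_for_lower_bound} gives
\[
f(\bar y^s)-f^*\le \phi(\bar y^s;\gamma)-f^*\le\max\{\iota\nu,2\nu^2/\mu^*\}\le 2\delta^2/\hat\mu_s.
\]
\end{itemize}
Choosing $\delta^2\asymp\hat\mu_1\vep/\norm{\lambda^*}_1$ then gives both bounds $\le\vep$. This uses $\hat\mu_s\ge\hat\mu_1/4^{\guessTime}\ge \hat\mu_1/(4\norm{\lambda^*}_1)$ and absorbs the transfer factor. Substituting this $\delta$ into the log term produces $\log\big((\norm{f'}^2+\norm{\mcal G'}^2)/(\hat\mu_1\vep)\big)$ plus an $O(\log\norm{\lambda^*}_1)$ piece absorbed into $\guessTime$, which is \eqref{eq:num_grad_oracle_small_mu}.

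The main obstacle is the mismatch in the step above: the certificate is stored with the current $\hat\gamma_s$, which need not exceed $2\norm{\lambda^*}_1$ when it is first stored. I would handle this by stopping only after all rejections along the $\gamma$-direction have occurred, or by observing that the gap bound for $\phi(\cdot;\hat\gamma_s)$ with $\hat\gamma_s<2\norm{\lambda^*}_1$ is controlled through the transferred certificate. The second route combines $f(\bar y^s)-f^*\ge-\norm{\lambda^*}_1[\mcal G(\bar y^s)]_+$ with the transfer constant $\tilde\nu\le(1+4\norm{\lambda^*}_1/\hat\gamma_s)\nu$. Either way the extra cost is only polynomial in $\norm{\lambda^*}_1$ inside the logarithm or in constant factors, so it is consistent with the stated bound.
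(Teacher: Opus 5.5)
Your proposal is correct and follows the paper's proof. Both run Theorem~\ref{thm:convergence_CW_certificate} at an auxiliary accuracy $\delta$, transfer the stored certificate to a penalty of at least $2\norm{\lambda^*}_1$ via Proposition~\ref{prop:CW_certificate_transfer}, apply Proposition~\ref{prop:QG_for_lower_bound} with $\mu^*$, and use $\hat{\mu}_s\geq\hat{\mu}_1/(4\norm{\lambda^*}_1)$ together with $\hat{\mu}_1<\mu^*$. Two small points. First, only the transfer route works: waiting for further $\gamma$-rejections does not help, because the algorithm may settle permanently on a pair with $\hat{\gamma}_s<2\norm{\lambda^*}_1$. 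Second, once the $O(\norm{\lambda^*}_1)$ transfer factor in $\nu$ is included, you need $\delta^2\asymp\hat{\mu}_1\vep/\norm{\lambda^*}_1^2$ rather than $\hat{\mu}_1\vep/\norm{\lambda^*}_1$; the paper takes $\delta^2=\hat{\mu}_1\vep/(4\norm{\lambda^*}_1(2\norm{\lambda^*}_1-1))$, and this only adds an $O(\log\norm{\lambda^*}_1)$ term to the logarithm.
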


\noindent\emph{Proof deferred.} The detailed proof is collected in Subsection~\ref{subsec:rpapex-detailed-proofs}.

Comparing Corollary~\ref{cor:convergence_opt_convio_small_mu} with
Theorem~\ref{thm:convergence_opt_convio} shows how the initial QG estimate
enters the final complexity bound. If $\hat{\mu}_{1}<\mu^{*}$, the leading
factor scales as $1/\sqrt{\hat{\mu}_{1}}$. If
$\hat{\mu}_{1}\geq\mu^{*}$, this factor is replaced by
$1/\sqrt{\mu^{*}}$, and the overestimate affects only the logarithmic term in
$\guessTime$.

\subsection{Detailed proofs\label{subsec:rpapex-detailed-proofs}}

We close this section by collecting the detailed proofs for the Restarted
    {\papex} analysis in their dependency order. Proposition~\ref{prop:convergence_rapex_gap_reduction}
gives the single-call gap-reduction cost for $\pagr$.
Proposition~\ref{prop:flag_false} identifies parameter-adjustment steps with
invalid parameter pairs and bounds their number, and
Proposition~\ref{prop:certificate_generation} proves that every
accepted refresh stores valid $\PWcer$ parameters. The three parameter-adjustment
lemmas then handle the remaining invariants:
Lemma~\ref{lem:tilde_Delta_new_proper} verifies the updated gap estimate,
Lemma~\ref{lem:lower_bound_valid_alg} preserves the lower-bound invariant, and
Lemma~\ref{lem:upper_bound_transfer} bounds the transferred gap estimate. These
results imply Theorem~\ref{thm:convergence_CW_certificate}; the optimality-gap
and constraint-violation guarantees then follow in
Theorem~\ref{thm:convergence_opt_convio} and
Corollary~\ref{cor:convergence_opt_convio_small_mu}.

\begin{proof}[\underline{Proof of Proposition~\ref{prop:convergence_rapex_gap_reduction}}]
    Algorithm~\ref{alg:ACGR} is the penalty analogue of $\mcal{AGR}$, with
    $\tilde l=\phi(\bar y;\gamma)-\theta\Delta$. Before the algorithm returns,
    Line~\ref{enu:prox_center_change} has not been triggered, so
    \[
        \phi(\hat x^{t+1};\gamma)-\tilde l
        > \underline{\phi}+\theta\Delta-\brbra{\phi(\bar y;\gamma)-\theta\Delta}
        \geq (2\theta-1)\Delta .
    \]
    Monotonicity gives the upper bound
    $\phi(\hat x^{t+1};\gamma)-\tilde l\leq\theta\Delta$. Applying
    Proposition~\ref{prop:property_papex} with these two residual bounds yields
    \[
        \frac{1}{2}(t+2)(t+3)(2\theta-1)\Delta-3\theta\Delta
        \leq \bar L_{\gamma}(t)\norm{x^{t+1}-\bar y}^{2}.
    \]
    Hence, once
    $t\geq\sqrt{(6\theta+4\theta\bar L_{\gamma}(t)/\mu)/(2\theta-1)}$, the radius
    test in Line~\ref{enu:radius_test} must be satisfied, proving the bound on
    $t_{\mcal G}$. The bound on $\bar L_{\gamma}(t_{\mcal G}-1)$ follows from the
    same weighted-average argument as in~\eqref{eq:upper_L_phi_s_t_s}, with the
    above lower and upper residual bounds and
    $L_{\gamma,t}\leq2\tilde L_{\gamma,t}$ from Proposition~\ref{prop:upper_bound_Lip}.
    \ifdefined\isarxiv
\end{proof}
\else
\qedsymbol\end{proof}
\fi

\begin{proof}[\underline{Proof of Proposition~\ref{prop:flag_false}}]
    Suppose $(\hat{\gamma}_{s},\hat{\mu}_{s})$ is valid, i.e.,
    $\hat{\mu}_{s}\le\mu^{*}$ and $\hat{\gamma}_{s}\ge2\norm{\lambda^*}_1$. Since
    $\phi(\cdot;2\norm{\lambda^*}_1)$ is $\mu^{*}$-QG, it is also $\hat{\mu}_{s}$-QG.
    Lemma~\ref{lem:lower_bound_valid_alg} then gives
    $\phi(\bar{y}^{s};\hat{\gamma}_{s})-f^{*}\le\tilde{\Delta}_{s}$ and
    $\underline{\phi}\le f^{*}$. Moreover, $\hat{\gamma}_{s}\ge2\norm{\lambda^*}_1$
    implies $\phi(\hat{x}^{s};\hat{\gamma}_{s})\ge f^{*}$. Therefore
    Proposition~\ref{prop:APWG_W_cer_gen_complexity} rules out
    $\paramflag=\false$, and the inequality
    $\phi(\hat{x}^{s};\hat{\gamma}_{s})<\underline{\phi}$ is impossible. Hence
    either event implies $\hat{\mu}_{s}>\mu^{*}$ or $\hat{\gamma}_{s}<2\norm{\lambda^*}_1$.

    Let
    $t_{\mu}:=\max\{\lceil\log_{4}(\hat{\mu}_{1}/\mu^{*})\rceil,0\}$ and
    $t_{\gamma}:=\max\{\lceil\log_{4}(2\norm{\lambda^*}_1/\hat{\gamma}_{1})\rceil,0\}$, so
    $\guessTime=t_{\mu}+t_{\gamma}$. If $\guessTime=0$, then the initial pair is
    already valid and Line~\ref{enu:adjust_parameter1} is never entered. Assume
    now that $\guessTime\ge1$. The routine $\adjustPara$ enumerates the pairs
    $(\hat{\mu}_{1}/4^{j},\hat{\gamma}_{1}\cdot4^{\,i-j})$ with
    $0\le j\le i$ and $i=1,2,\ldots$. At level $i=\guessTime$, the pair with
    $j=t_{\mu}$ equals
    $(\hat{\mu}_{1}/4^{t_{\mu}},\hat{\gamma}_{1}\cdot4^{t_{\gamma}})$, which is
    valid because $\hat{\mu}_{1}/4^{t_{\mu}}\le\mu^{*}$ and
    $\hat{\gamma}_{1}\cdot4^{t_{\gamma}}\ge2\norm{\lambda^*}_1$. Thus no pair with
    $i>\guessTime$ is explored before the first valid pair is reached. Since
    level $i$ contains $i+1$ pairs, the number of visits to
    Line~\ref{enu:adjust_parameter1} is at most
    $\sum_{i=1}^{\guessTime}(i+1)=\guessTime(\guessTime+3)/2$. Finally,
    $0\le j\le i\le\guessTime$ yields
    $\hat{\mu}_{s}=\hat{\mu}_{1}/4^{j}\ge\hat{\mu}_{1}/4^{\guessTime}$ and
    $\hat{\gamma}_{s}=\hat{\gamma}_{1}\cdot4^{\,i-j}\le\hat{\gamma}_{1}\cdot4^{\guessTime}$.
    \ifdefined\isarxiv
\end{proof}
\else
\qedsymbol\end{proof}
\fi

\begin{proposition}
    \label{prop:certificate_generation}
    Whenever Algorithm~\ref{alg:rAPEXC} reaches
    Line~\ref{enu:store_certificate}, write the stored tuple as
    $\Cert=(\bar y^s,\iota_c,\nu_c,\gamma_c)$. Then there exists a point set
    $\mcal P_s$ forming an $(\iota_c,\nu_c,\gamma_c)$-$\PWcer$ for $\bar y^s$ in
    problem~\eqref{eq:initial_FP}. The stored parameters are as follows.
    \begin{enumerate}[label=(\roman*)]
        \item If $\lowerflag=\true$ and
              $[\mcal G(\bar y^s)]_+\leq2\theta\tilde{\Delta}_s/\hat{\gamma}_s$, then
              $
                  (\iota_c,\nu_c,\gamma_c)=
                  \brbra{\sqrt{\frac{2\theta\tilde{\Delta}_s}{\hat{\mu}_s}},
                      \sqrt{\frac{\hat{\mu}_s\theta\tilde{\Delta}_s}{2}},\hat{\gamma}_s}.
              $
        \item Otherwise, we have
              $
                  (\iota_c,\nu_c,\gamma_c)=
                  \brbra{\sqrt{\frac{2(1+\beta)\tilde{\Delta}_s}{\hat{\mu}_s}},
                      \sqrt{\frac{\hat{\mu}_s(1+\beta)\tilde{\Delta}_s}{2}},\hat{\gamma}_s}.
              $
    \end{enumerate}
\end{proposition}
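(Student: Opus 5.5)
The plan is to split according to which branch of Algorithm~\ref{alg:rAPEXC} led to Line~\ref{enu:store_certificate}. In each branch, the point set is the collection of evaluation points $\underline{x}^{t,j}$ generated by the subroutine call that justified acceptance, restricted to the certificate ball. Two facts are common to both branches. First, $\bar y^s$ belongs to this set automatically, because $\hat x^1=x^{1,0}=\bar y^s$ forces $\underline{x}^{1,1}=\bar y^s$. Consequently $\psi^{+}_{\mcal P}(\bar y^s;\hat\gamma_s)=\phi(\bar y^s;\hat\gamma_s)$ by convexity, exactly as in the proof of Proposition~\ref{prop:QG_for_lower_bound}. Second, in each branch the Wolfe residual condition is obtained from Lemma~\ref{lem:lower_bound_of_ball}, and the constraint-violation condition is read off from a test the algorithm has already passed.

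\emph{Case (i).} Here $\pagr$ returned $\lowerflag=\true$ with level $\tilde l=\phi(\bar y^s;\hat\gamma_s)-\theta\tilde\Delta_s$. Set $\iota_c=\sqrt{2\theta\tilde\Delta_s/\hat\mu_s}$.
\begin{enumerate}[label=(\alph*)]
\item If the radius test in Line~\ref{enu:radius_test} fired, then $\iota_c<\norm{x^{t+1}-\bar y^s}$. Lemma~\ref{lem:lower_bound_of_ball} (applied to $\pagr$, whose inner loop is the same $\onestepplus$ recursion) gives $\psi^{+}_{\tilde{\mcal P}(\iota_c)}(x;\hat\gamma_s)>\tilde l$ on $\mcal B(\bar y^s,\iota_c)\cap X$.
\item If instead the subproblem was infeasible ($\flag=\true$), the same conclusion holds for any radius, with the same projection-separation argument. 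I would state this as the infeasible-case analogue of the lemma.
\end{enumerate}
In either subcase, the $\PWcer$ definition gives $\mcal V^{+}\le\theta\tilde\Delta_s/\iota_c=\sqrt{\hat\mu_s\theta\tilde\Delta_s/2}=\nu_c$. Adding $\bar y^s$ to the set, if it is not already there, only raises $\psi^{+}$ away from $\bar y^s$ and keeps its value at $\bar y^s$, so the bound survives. For the constraint condition, $\iota_c\nu_c=\theta\tilde\Delta_s$, so $2\iota_c\nu_c/\hat\gamma_s=2\theta\tilde\Delta_s/\hat\gamma_s$. This is exactly the threshold assumed in case (i).

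\emph{Case (ii).} First check that acceptance forces a successful $\pawg$ call. If $\pawg$ is skipped at Line~\ref{enu:upper_candidate_check}, then $\phi(\hat x^s;\hat\gamma_s)<\phi(\bar y^s;\hat\gamma_s)-\tilde\Delta_s$, and Line~\ref{enu:parameter_reject} rejects the pair. So reaching Line~\ref{enu:store_certificate} through the validation branch means $\pawg$ ran with $\iota_{\max}=\sqrt{2(1+\beta)\tilde\Delta_s/\hat\mu_s}$ and returned $\true$ at Line~\ref{enu:return_W_cer}.
\begin{itemize}
\item Residual: the first half of the proof of Proposition~\ref{prop:APWG_W_cer_gen_complexity} uses only Lemma~\ref{lem:lower_bound_of_ball} and the level $\tilde l=\phi(\bar y^s;\hat\gamma_s)-(1+\beta)\tilde\Delta_s$, not the extra assumptions $\gamma\ge 2\norm{\lambda^*}_1$ or $\phi-f^*\le\Delta$. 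It gives $\mcal V^{+}<(1+\beta)\tilde\Delta_s/\iota_{\max}=\sqrt{\hat\mu_s(1+\beta)\tilde\Delta_s/2}=\nu_c$.
\item Constraint: I will not use the KKT argument from that proof. Instead, $\pawg$ did not exit at Line~\ref{enu:false_direct}, so $[\mcal G(\bar y^s)]_+\le 2(1+\beta)\tilde\Delta_s/\hat\gamma_s=2\iota_c\nu_c/\hat\gamma_s$.
\end{itemize}

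The main obstacle is the careful bookkeeping rather than any new estimate. Three points need care: confirming that Lemma~\ref{lem:lower_bound_of_ball} and its infeasible-flag version transfer verbatim to $\pagr$ and $\pawg$; confirming that $\bar y^s\in\mcal P_s$ with $\mcal P_s\subseteq\mcal B(\bar y^s,\iota_c)\cap X$; and replacing every use of the unknown $\norm{\lambda^*}_1$ by the algorithm's own explicit checks, so that the certificate is valid whether or not $(\hat\gamma_s,\hat\mu_s)$ is a valid pair.
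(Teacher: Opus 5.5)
Your proof is correct and uses the paper's two-case split. In case (i) you spell out, via Lemma~\ref{lem:lower_bound_of_ball}, the projection-separation argument that the paper imports from Proposition~6.2 of the companion paper.

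The genuine difference is in case (ii). The paper applies Proposition~\ref{prop:APWG_W_cer_gen_complexity} as a black box. However, that proposition's certificate conclusion is only established under $\hat\gamma_s\ge 2\norm{\lambda^*}_1$ and $\phi(\bar y^s;\hat\gamma_s)-f^*\le\tilde\Delta_s$. In particular, its proof obtains the violation bound $[\mcal G(\bar y^s)]_+\le 2(1+\beta)\tilde\Delta_s/\hat\gamma_s$ from the KKT inequality. Reaching Line~\ref{enu:store_certificate} certifies neither hypothesis.

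You instead take the Wolfe-residual bound from the assumption-free part of that proof. You then read the violation bound off the early exit at Line~\ref{enu:false_direct} of Algorithm~\ref{alg:APWG}. So your certificate holds whether or not $(\hat\gamma_s,\hat\mu_s)$ is a valid pair. This unconditional version is what the later arguments actually need. Theorem~\ref{thm:convergence_opt_convio} transfers the stored certificate to $2\norm{\lambda^*}_1$ even when $\hat\gamma_s<2\norm{\lambda^*}_1$. Likewise, $\adjustPara$ (via Lemma~\ref{lem:tilde_Delta_new_proper}) reuses certificates that were stored under a pair later rejected. The paper's route buys only brevity.

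One detail to get right when you write the infeasible-flag analogue. Projection points of the last $\onestepplus$ call may leave $\mcal B(\bar y^s,\iota_c)$ before the infeasible index, so you need to split on which happens first.
\begin{itemize}
\item If some $x^{t',i'}$ exits the ball before the infeasible index, the radius argument of Lemma~\ref{lem:lower_bound_of_ball} applies at that index.
\item Otherwise, every evaluation point lies in the ball, since $\norm{\underline x^{t,i}-\bar y^s}\le\norm{x^{t,i-1}-\bar y^s}$. Emptiness of $X(t,i)$ then gives $\psi^{+}>\tilde l$ on all of $X$, because the level set of the model built from all earlier cuts lies in $X(t-1,\bundleSize)\subseteq\bar X(t,0)$.
\end{itemize}
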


\begin{proof}
    If the first case holds, $\pagr$ returned with $\lowerflag=\true$.
    The lower-bound trigger uses the level
    $\phi(\bar y^s;\hat{\gamma}_s)-\theta\tilde{\Delta}_s$ and radius
    $\sqrt{2\theta\tilde{\Delta}_s/\hat{\mu}_s}$. Together with the feasibility
    check in Line~\ref{enu:store_certificate}, the same certificate-generation argument
    as in Proposition~\red{6.2} of~\cite{partOne}, with $\Wcer$ replaced by
    $\PWcer$, gives the first certificate.

    In the complementary case, reaching Line~\ref{enu:store_certificate} means that the
    validation branch did not reject the current parameters. Hence the call to
    $\pawg$ at Line~\ref{enu:run_APWG} returned $\paramflag=\true$. Applying
    Proposition~\ref{prop:APWG_W_cer_gen_complexity} with
    $\Delta=\tilde{\Delta}_s$ and
    $\iota_{\max}=\sqrt{2(1+\beta)\tilde{\Delta}_s/\hat{\mu}_s}$ gives the second
    certificate. Algorithm~\ref{alg:rAPEXC} stores only the displayed radius,
    tolerance, and penalty parameter in $\Cert$; the point set $\mcal P_s$ is used
    only through its guaranteed existence.
    \ifdefined\isarxiv
\end{proof}
\else
\qedsymbol\end{proof}
\fi

The proof of Theorem~\ref{thm:convergence_CW_certificate} uses three
preliminary lemmas. Lemma~\ref{lem:tilde_Delta_new_proper} verifies the gap
estimate produced by $\adjustPara$ after a parameter update.
Lemma~\ref{lem:lower_bound_valid_alg} shows that Algorithm~\ref{alg:rAPEXC}
preserves the lower-bound invariant under valid parameter estimates.
Lemma~\ref{lem:upper_bound_transfer} bounds how much the gap estimate can grow
during certificate transfer.

\begin{lem}
    \label{lem:tilde_Delta_new_proper}
    Suppose that $\phi(\cdot;\hat{\gamma}_{\text{new}})$ satisfies the $\hat{\mu}_{\text{new}}$-QG condition,  $\hat{\gamma}_{\text{new}}\geq2\norm{\lambda^*}_1$ and there exists a point set $\mcal P_{c}$ that is an $\brbra{\iota_c, \nu_c, \gamma_c}$-$\PWcer$ for point $y_c$, where $\iota_c$, $\nu_c$, and $\gamma_c$ are the corresponding parameters in tuple $\Cert$. Then the update of $\tilde{\Delta}_{\text{new}}$ in Function~\ref{alg:parameter_adjust} satisfies
    \[
        \phi(\hat{y};\hat{\gamma}_{\text{new}})-f^{*}\leq\tilde{\Delta}_{\text{new}}.
    \]
\end{lem}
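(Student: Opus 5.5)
Since $\tilde{\Delta}_{\text{new}}=\phi(\hat y;\hat\gamma_{\text{new}})-\underline{\phi}$, the claim is equivalent to $\underline{\phi}\le f^*$. Because $\underline{\phi}$ is defined in Line~\ref{enu:lower_bound_compute} as a maximum of two expressions, we will show each one is at most $f^*$. A preliminary fact will be used throughout. Since $\hat\gamma_{\text{new}}\ge 2\norm{\lambda^*}_1>\norm{\lambda^*}_1$, exact penalization gives $\min_{x\in X}\phi(x;\hat\gamma_{\text{new}})=f^*$, and the minimizer set is $X^*$. The same holds for every $\gamma\ge\hat\gamma_{\text{new}}$, so it holds for $\gamma'=\max\{\gamma_c,\hat\gamma_{\text{new}}\}$.

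\emph{Second term (local subgradient bound).} Fix $y\in\{\bar y,y_c,\bar y^0\}$ and let $x^*$ be its projection onto $X^*$. Convexity of $\phi(\cdot;\hat\gamma_{\text{new}})$ and Cauchy--Schwarz give
\[
\phi(y;\hat\gamma_{\text{new}})-f^*\le\norm{\phi'(y;\hat\gamma_{\text{new}})}\dist(y,X^*).
\]
The $\hat\mu_{\text{new}}$-QG condition gives $\dist(y,X^*)\le\sqrt{2(\phi(y;\hat\gamma_{\text{new}})-f^*)/\hat\mu_{\text{new}}}$. Substituting and squaring yields
\[
\phi(y;\hat\gamma_{\text{new}})-f^*\le 2\norm{\phi'(y;\hat\gamma_{\text{new}})}^2/\hat\mu_{\text{new}},
\]
which is exactly the required bound for the second term.

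\emph{First term (transferred certificate).} Apply Proposition~\ref{prop:CW_certificate_transfer} with $\tilde\gamma=\hat\gamma_{\text{new}}$. It shows that $\mcal P_c$ is an $(\iota_c,\bar\nu,\gamma')$-$\PWcer$ for $y_c$, where $\bar\nu$ is the value computed in Line~\ref{enu:certificate_transfer_begin}. We then want to invoke Proposition~\ref{prop:QG_for_lower_bound} for $\phi(\cdot;\gamma')$, which requires $\phi(\cdot;\gamma')$ to be $\hat\mu_{\text{new}}$-QG. This is the main obstacle, and I would handle it by monotonicity in $\gamma$. For $\gamma'\ge\hat\gamma_{\text{new}}$ we have $\phi(x;\gamma')\ge\phi(x;\hat\gamma_{\text{new}})$ pointwise, and both functions have minimum value $f^*$ attained on the same set $X^*$. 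The latter holds because any minimizer of $\phi(\cdot;\gamma')$ is feasible when $\gamma'>\norm{\lambda^*}_1$, and hence lies in $X^*$. Consequently
\[
\phi(x;\gamma')-f^*\ge\phi(x;\hat\gamma_{\text{new}})-f^*\ge\tfrac{\hat\mu_{\text{new}}}{2}\dist^2(x,X^*),
\]
so the QG condition transfers. Proposition~\ref{prop:QG_for_lower_bound} then gives
\[
\phi(y_c;\gamma')-f^*\le\max\{\iota_c\bar\nu,\,2\bar\nu^2/\hat\mu_{\text{new}}\},
\]
which is the required bound for the first term.

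Combining the two bounds gives $\underline\phi\le f^*$, and therefore $\phi(\hat y;\hat\gamma_{\text{new}})-f^*\le\phi(\hat y;\hat\gamma_{\text{new}})-\underline\phi=\tilde\Delta_{\text{new}}$. One degenerate case needs a remark. If $\Cert$ is still the placeholder with $\nu_c=+\infty$, then $\bar\nu=+\infty$, the first term equals $-\infty$, and it is trivially at most $f^*$. In that case the hypothesis about the certificate is only needed formally.
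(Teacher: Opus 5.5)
Your proposal is correct and follows the paper's proof: both reduce the claim to $\underline{\phi}\le f^*$ and check the two terms separately, using the subgradient-plus-QG bound for the second term and Propositions~\ref{prop:CW_certificate_transfer} and~\ref{prop:QG_for_lower_bound} for the first. Your explicit argument that $\phi(\cdot;\max\{\gamma_c,\hat{\gamma}_{\text{new}}\})$ inherits the $\hat{\mu}_{\text{new}}$-QG condition fills in a step the paper uses without comment. That argument rests on the minimizer set being the same $X^*$ and the function being pointwise no smaller.
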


\begin{proof}
    The condition $\hat{\gamma}_{\text{new}}\geq 2\norm{\lambda^*}_1$ ensures $\min_{x\in X} \phi(x;\hat{\gamma}_{\text{new}}) = f^* $.
    Since $\phi(\cdot;\hat{\gamma}_{\text{new}})$ satisfies the $\hat{\mu}_{\text{new}}$-QG condition, we have
    \begin{equation}
        \begin{aligned}
             & \frac{\hat{\mu}_{\text{new}}}{2}\dist^{2}(\hat{y}, X)\leq\phi(\hat{y};\hat{\gamma}_{\text{new}})-\min_{x\in X}\phi(x;\hat{\gamma}_{\text{new}})                               \\
             & \leq \inner{\phi^{\prime}(\hat{y};\hat{\gamma}_{\text{new}})}{\hat{y}-x_{\gamma}^{*}}\leq\norm{\phi^{\prime}(\hat{y};\hat{\gamma}_{\text{new}})}\norm{\hat{y}-x_{\gamma}^{*}}
        \end{aligned}\ ,
    \end{equation}
    which implies
    \begin{equation}
        f^* = \min_{x\in X}\phi(x;\hat{\gamma}_{\text{new}})\geq\phi(\hat{y};\hat{\gamma}_{\text{new}})-\frac{2\norm{\phi^{\prime}(\hat{y};\hat{\gamma}_{\text{new}})}^{2}}{\hat{\mu}_{\text{new}}}\ .\label{eq:QG_imply_lower_bound_new}
    \end{equation}
    Hence, the second term of Line~\ref{enu:lower_bound_compute} in Function~\ref{alg:parameter_adjust} is a valid lower bound of $f^{*}$. On the other hand, applying Proposition~\ref{prop:CW_certificate_transfer} with the $\iota = \iota_c$, $\nu = \nu_c$, $\gamma = \gamma_c$ and $\tilde{\gamma} = \hat{\gamma}_{\text{new}}$ and Proposition~\ref{prop:QG_for_lower_bound} with $\mu = \hat{\mu}_{\text{new}}$ gives
    \begin{equation}
        \phi(y_c;\max\bcbra{\gamma_c, \hat{\gamma}_{\text{new}}}) - \max\bcbra{\iota_c \bar{\nu}, \frac{2\bar{\nu}^2}{\hat{\mu}_{\text{new}}}}\leq  f^*\ .
    \end{equation}
    Therefore, the first term of Line~\ref{enu:lower_bound_compute} in Function~\ref{alg:parameter_adjust} is also a valid lower bound of $f^{*}$.
    Therefore, this completes the proof of the lemma.
\end{proof}

\begin{lem}
    \label{lem:lower_bound_valid_alg}
    Suppose that $\phi\brbra{\cdot;\hat{\gamma}_{s}}$ satisfies the
    $\hat{\mu}_{s}$-QG condition and that $\hat{\gamma}_{s}\geq2\norm{\lambda^*}_1$.
    Then each update of the gap estimate in Algorithm~\ref{alg:rAPEXC} at
    Line~\ref{enu:adjust_parameter1} or at the unified refresh step
    Line~\ref{enu:store_certificate}
    preserves the invariant
    $\phi\brbra{\bar{y}^{s+1};\hat{\gamma}_{s+1}}-f^{*}\leq\tilde{\Delta}_{s+1}$.
\end{lem}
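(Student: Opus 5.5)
We argue by cases on the update that produces $\tilde{\Delta}_{s+1}$. Two facts are used throughout. First, $\hat{\gamma}_s\ge2\norm{\lambda^*}_1$ gives exact penalization, $\min_{x\in X}\phi(x;\hat{\gamma}_s)=f^*$, so $\phi(x;\hat{\gamma}_s)\ge f^*$ for all $x\in X$. Second, the invariant is meaningful only if it already holds at the current stage, $\phi(\bar y^s;\hat{\gamma}_s)-f^*\le\tilde{\Delta}_s$. We would set this up as an induction over the sequence of updates. At the initialization, $\tilde{\Delta}_1=2\norm{\phi'(\bar y^1;\hat{\gamma}_1)}^2/\hat{\mu}_1$ is valid by the subgradient/QG bound~\eqref{eq:QG_imply_lower_bound_new}, whenever the initial pair is valid.

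\textbf{Case 1 (parameter adjustment, Line~\ref{enu:adjust_parameter1}).} The new triple $(\hat{\gamma}_{\text{new}},\hat{\mu}_{\text{new}},\hat y,\tilde{\Delta}_{\text{new}})$ is produced by $\adjustPara$. The hypotheses of the lemma, applied to the new pair, are exactly those of Lemma~\ref{lem:tilde_Delta_new_proper}. The remaining requirement there is that the stored $\Cert$ certifies a genuine $\PWcer$. This holds by Proposition~\ref{prop:certificate_generation} whenever $\Cert$ has been refreshed. For the initial placeholder $(\bar y^0,1,+\infty,\hat{\gamma}_1)$, the first term in Line~\ref{enu:lower_bound_compute} equals $-\infty$, so only the subgradient term matters, and that term is valid by~\eqref{eq:QG_imply_lower_bound_new}. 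Lemma~\ref{lem:tilde_Delta_new_proper} then gives $\phi(\hat y;\hat{\gamma}_{\text{new}})-f^*\le\tilde{\Delta}_{\text{new}}$, which is the invariant for the restarted stage.

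\textbf{Case 2 (unified refresh, Line~\ref{enu:store_certificate}).} Here $\bar y^{s+1}=\hat x^s$ and $\hat{\gamma}_{s+1}=\hat{\gamma}_s$. By construction, $\tilde{\Delta}_{s+1}=\phi(\bar y^{s+1};\hat{\gamma}_s)-\underline{\phi}_s$, so it suffices to show $\underline{\phi}_s\le f^*$.

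If $\lowerflag=\false$, then $\underline{\phi}_s=\phi(\bar y^s;\hat{\gamma}_s)-\tilde{\Delta}_s\le f^*$ by the inductive hypothesis.

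If $\lowerflag=\true$, then $\underline{\phi}_s=\phi(\bar y^s;\hat{\gamma}_s)-\theta\tilde{\Delta}_s$, and we must show that the level $\tilde l=\phi(\bar y^s;\hat{\gamma}_s)-\theta\tilde{\Delta}_s$ lies below $f^*$. This is the main step, and it has two sub-cases.

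\emph{Infeasible subproblem.} The sets $X(t,i)$ contain $\{x\in X:\phi(x;\hat{\gamma}_s)\le\tilde l\}\cap\bar X(t,0)$ by~\eqref{eq:key_observation}. By the standard projection argument (as in Theorem~\ref{thm:papex_f_star}), $\bar X(t,0)$ never cuts off the projection of $\bar y$ onto that level set. Hence infeasibility implies that the level set is empty, i.e.\ $\tilde l<f^*$.

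\emph{Radius test.} If $\norm{x^{t+1}-\bar y^s}^2>2\theta\tilde{\Delta}_s/\hat{\mu}_s$, suppose for contradiction that $f^*\le\tilde l$. Then $X^*$ lies inside every $X(t,i)$, so $\norm{x^{t+1}-\bar y^s}\le\dist(\bar y^s,X^*)$. QG with modulus $\hat{\mu}_s$ gives
\[
\dist^2(\bar y^s,X^*)\le\frac{2}{\hat{\mu}_s}\bigl(\phi(\bar y^s;\hat{\gamma}_s)-f^*\bigr)\le\frac{2}{\hat{\mu}_s}\bigl(\phi(\bar y^s;\hat{\gamma}_s)-\tilde l\bigr)=\frac{2\theta\tilde{\Delta}_s}{\hat{\mu}_s},
\]
which contradicts the radius test. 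Hence $\tilde l<f^*$.

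In both sub-cases $\underline{\phi}_s\le f^*$, and therefore $\phi(\bar y^{s+1};\hat{\gamma}_{s+1})-f^*\le\tilde{\Delta}_{s+1}$.

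The delicate point we anticipate is the radius sub-case. It requires that QG for $\phi(\cdot;\hat{\gamma}_s)$ be measured with respect to the same set $X^*$ as for the original problem. This holds because $\hat{\gamma}_s\ge2\norm{\lambda^*}_1>\norm{\lambda^*}_1$, so the minimizers of $\phi(\cdot;\hat{\gamma}_s)$ coincide with $X^*$.
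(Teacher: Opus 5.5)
\textbf{Gap in the radius sub-case.} Your case structure matches the paper's proof. The adjustment step goes through Lemma~\ref{lem:tilde_Delta_new_proper}, with the placeholder certificate neutralized by $\nu_c=+\infty$. The refresh with $\lowerflag=\false$ uses the inductive invariant. The refresh with $\lowerflag=\true$ reduces to showing $\tilde l=\phi(\bar y^s;\hat\gamma_s)-\theta\tilde\Delta_s\le f^*$. The paper only asserts this last fact ("the lower-bound trigger in $\pagr$ certifies the penalty level"). You try to prove it directly, and your infeasible-subproblem sub-case is correct.

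The radius sub-case, which you yourself call the main step, has a reversed inequality. Under your contradiction hypothesis $f^*\le\tilde l$ you get $\phi(\bar y^s;\hat\gamma_s)-f^*\ge\phi(\bar y^s;\hat\gamma_s)-\tilde l=\theta\tilde\Delta_s$, not $\le$. The step you wrote is equivalent to $\tilde l\le f^*$, so it assumes the conclusion. Comparing $x^{t+1}$ with $X^*$ cannot work here. $\dist^2(\bar y^s,X^*)$ can be as large as $2(\phi(\bar y^s;\hat\gamma_s)-f^*)/\hat\mu_s$. Even the invariant $\phi(\bar y^s;\hat\gamma_s)-f^*\le\tilde\Delta_s$ only bounds this by $2\tilde\Delta_s/\hat\mu_s$, which misses the radius threshold $2\theta\tilde\Delta_s/\hat\mu_s$ by the factor $\theta$. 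The point you flagged as delicate (identifying the minimizer set of $\phi(\cdot;\hat\gamma_s)$ with $X^*$) is fine; the difficulty lies elsewhere.

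\textbf{Fix via the sublevel set.} Compare with $\mcal L:=\{x\in X:\phi(x;\hat\gamma_s)\le\tilde l\}$ instead of $X^*$. The induction you used in the infeasible case shows $\mcal L\subseteq X(t,i)$ for all $t,i$, so $\norm{x^{t+1}-\bar y^s}\le\dist(\bar y^s,\mcal L)$. Let $D:=\phi(\bar y^s;\hat\gamma_s)-f^*$; under the hypothesis $f^*\le\tilde l$ we have $D\ge\theta\tilde\Delta_s$. Let $x^*$ be the projection of $\bar y^s$ onto $X^*$ and set $\tau:=\theta\tilde\Delta_s/D\in(0,1]$. By convexity, the point $\bar y^s+\tau(x^*-\bar y^s)$ has penalty value at most $\phi(\bar y^s;\hat\gamma_s)-\tau D=\tilde l$, so it lies in $\mcal L$. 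QG then gives
\[
\dist(\bar y^s,\mcal L)\le\tau\,\dist(\bar y^s,X^*)\le\theta\tilde\Delta_s\sqrt{\frac{2}{\hat\mu_s D}}\le\sqrt{\frac{2\theta\tilde\Delta_s}{\hat\mu_s}},
\]
which contradicts the radius test.

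\textbf{Alternative fix via the certificate.} This is the route the paper implicitly relies on. Apply Lemma~\ref{lem:lower_bound_of_ball} with $\iota=\sqrt{2\theta\tilde\Delta_s/\hat\mu_s}$, which is admissible since $\iota<\norm{x^{t+1}-\bar y^s}$. It gives a Wolfe gap at most $\nu:=\theta\tilde\Delta_s/\iota$. The proof of Proposition~\ref{prop:QG_for_lower_bound} does not use the constraint-violation part of the certificate. It therefore yields $\phi(\bar y^s;\hat\gamma_s)-f^*\le\max\{\iota\nu,2\nu^2/\hat\mu_s\}=\theta\tilde\Delta_s$. With either fix, the rest of your argument goes through.
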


\begin{proof}
    We consider the parameter-adjustment step and the two cases of the unified
    refresh step, and show that the inequality
    $\phi\brbra{\bar{y}^{s};\hat{\gamma}_{s}}-f^{*}\leq\tilde{\Delta}_{s}$ is
    preserved.

    \textbf{Line}~\ref{enu:adjust_parameter1}: For $s\geq 2$, there exists a $\Cert$ as input of Function~\ref{alg:parameter_adjust}. Hence, by Lemma~\ref{lem:tilde_Delta_new_proper}, the new gap estimate $\tilde{\Delta}_{\text{new}}$ is a valid upper bound of $\phi(\hat{y};\hat{\gamma}_{\text{new}})-f^*$, which is equivalent to our claim that $\phi(\bar{y}^s;\hat{\gamma}_{s})-f^* \leq \tilde{\Delta}_{s}$ since $\bar{y}^s = \hat{y}$,  $\hat{\gamma}_s=\hat{\gamma}_{\text{new}}$ and $\tilde{\Delta}_s = \tilde{\Delta}_{\text{new}}$ in Function~\ref{alg:parameter_adjust}. For $s = 1$, we set the value $\nu = \infty$, which implies $\underline{\phi}$ must be chosen as the second term in Line~\ref{enu:lower_bound_compute} of Function~\ref{alg:parameter_adjust}, which is also a valid lower bound of $f^*$ by Lemma~\ref{lem:tilde_Delta_new_proper}.

    \textbf{Line~\ref{enu:store_certificate} with $\lowerflag=\true$:}
    The lower-bound trigger in $\pagr$ certifies the penalty level
    $\phi\brbra{\bar y^s;\hat{\gamma}_s}-\theta\tilde{\Delta}_s$. Hence
    \[
        \phi\brbra{\bar y^s;\hat{\gamma}_s}-\theta\tilde{\Delta}_s\leq
        \min_{x\in X}\phi\brbra{x;\hat{\gamma}_s} = f^*.
    \]
    Line~\ref{enu:store_certificate}
    sets $\bar y^{s+1}=\hat x^s$ and
    \[
        \tilde{\Delta}_{s+1}
        =\phi\brbra{\bar y^{s+1};\hat{\gamma}_s}
        -\brbra{\phi\brbra{\bar y^s;\hat{\gamma}_s}-\theta\tilde{\Delta}_s}.
    \]
    Therefore
    \[
        \phi\brbra{\bar y^{s+1};\hat{\gamma}_s}-f^*
        \leq \tilde{\Delta}_{s+1}.
    \]

    \textbf{Line~\ref{enu:store_certificate} with $\lowerflag=\false$:}
    By the invariant at the beginning of the refresh step, the current estimate satisfies
    $\phi(\bar y^s;\hat{\gamma}_s)-\tilde{\Delta}_s\leq f^*$. Line~\ref{enu:store_certificate}
    sets $\bar y^{s+1}=\hat x^s$ and
    \[
        \tilde{\Delta}_{s+1}
        =\phi\brbra{\bar y^{s+1};\hat{\gamma}_s}
        -\brbra{\phi\brbra{\bar y^s;\hat{\gamma}_s}-\tilde{\Delta}_s}.
    \]
    Hence
    \[
        \phi\brbra{\bar y^{s+1};\hat{\gamma}_s}-f^*
        \leq \tilde{\Delta}_{s+1}.
    \]
    Together with the assignment
    $\hat{\gamma}_{s+1}=\hat{\gamma}_s$, both refresh cases give the required gap
    bound for the next stage.
    \ifdefined\isarxiv
\end{proof}
\else
\qedsymbol\end{proof}
\fi

\begin{lem}
    \label{lem:upper_bound_transfer}
    Suppose that a point set $\mcal P$ forms an $\brbra{\iota_{c},\nu_{c},\gamma_{c}}$-$\PWcer$ for a point $y_c$ in the problem~\eqref{eq:initial_FP}. Let $\mu_c>0$ satisfy $\iota_{c}=\sqrt{2\Delta/\mu_{c}}$ and $\nu_{c}=\sqrt{\mu_{c}\Delta/2}$. If Function~\ref{alg:parameter_adjust} with $\Cert = \brbra{y_c, \iota_c, \nu_c, \gamma_c}$, it returns
    $\brbra{\hat{\gamma}_{\text{new}},\hat{\mu}_{\text{new}},\hat{y},\tilde{\Delta}_{\text{new}}}$ satisfying
    \begin{equation*}
        \tilde{\Delta}_{\text{new}}\leq \Delta\max\bcbra{2\eta+1,\brbra{1+4\eta+4\eta^{2}}\frac{\mu_{c}}{\hat{\mu}_{\text{new}}}},
        \qquad
        \eta:=\max\bcbra{\frac{\hat{\gamma}_{\text{new}}}{\gamma_{c}}-1,0}.
    \end{equation*}
\end{lem}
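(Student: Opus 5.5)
Since $\tilde{\Delta}_{\text{new}}=\phi(\hat y;\hat\gamma_{\text{new}})-\underline{\phi}$, the plan is to bound each piece separately. For the first, use Line~\ref{enu:bar_y}: $\hat y$ minimizes $\phi(\cdot;\hat\gamma_{\text{new}})$ over a set containing $y_c$, so $\phi(\hat y;\hat\gamma_{\text{new}})\le\phi(y_c;\hat\gamma_{\text{new}})\le\phi(y_c;\max\{\gamma_c,\hat\gamma_{\text{new}}\})$. The last inequality holds because $\phi(x;\gamma)$ is nondecreasing in $\gamma$. For the second, $\underline{\phi}$ is a maximum, so it is at least its first (transferred-certificate) term. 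Combining the two gives
\[
\tilde{\Delta}_{\text{new}}\le \max\bcbra{\iota_c\bar\nu,\,\frac{2\bar\nu^{2}}{\hat\mu_{\text{new}}}},
\]
and the $\phi(y_c;\cdot)$ terms cancel exactly. The second term of $\underline{\phi}$ is never used.

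Next I compute $\bar\nu$ from Line~\ref{enu:certificate_transfer_begin}: $\bar\nu=\max\{\hat\gamma_{\text{new}}-\gamma_c,0\}\cdot 2\nu_c/\gamma_c+\nu_c=(1+2\eta)\nu_c$, using the definition of $\eta$. Substituting $\iota_c=\sqrt{2\Delta/\mu_c}$ and $\nu_c=\sqrt{\mu_c\Delta/2}$ gives $\iota_c\nu_c=\Delta$ and $\nu_c^2=\mu_c\Delta/2$. Hence
\[
\iota_c\bar\nu=(1+2\eta)\Delta,\qquad \frac{2\bar\nu^2}{\hat\mu_{\text{new}}}=(1+2\eta)^2\frac{\mu_c}{\hat\mu_{\text{new}}}\Delta=(1+4\eta+4\eta^2)\frac{\mu_c}{\hat\mu_{\text{new}}}\Delta,
\]
which is exactly the claimed bound.

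The only delicate point is the first step: checking that the upper value $\phi(\hat y;\hat\gamma_{\text{new}})$ and the base value $\phi(y_c;\max\{\gamma_c,\hat\gamma_{\text{new}}\})$ in $\underline{\phi}$ compare in the right direction. This is handled by the inclusion of $y_c$ in the candidate set of Line~\ref{enu:bar_y} together with monotonicity of $\phi$ in $\gamma$, since $[\mcal G]_+\ge0$. No validity of the parameter guesses or of the certificate (QG, $\gamma\ge2\norm{\lambda^*}_1$) is needed; the lemma is purely algebraic given the formulas in Function~\ref{alg:parameter_adjust}. The hypothesis that $\mcal P$ is a $\PWcer$ only motivates the parametrization of $\iota_c,\nu_c$.
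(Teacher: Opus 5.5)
Your proof is correct and essentially identical to the paper's. Like the paper, you bound $\tilde{\Delta}_{\text{new}}$ by the transferred-certificate term of $\underline{\phi}$, using that $y_c$ is among the candidates defining $\hat{y}$ and that $\phi(x;\gamma)$ is nondecreasing in $\gamma$; you then substitute $\bar{\nu}=(1+2\eta)\nu_c$ together with $\iota_c\nu_c=\Delta$ and $\nu_c^2=\mu_c\Delta/2$. You are also right that the $\PWcer$ hypothesis and any validity of the parameter guesses are never used, which matches the paper's purely algebraic argument.
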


\begin{proof}
    Let $\hat{y}$ be the point selected at Line~\ref{enu:bar_y} of Function~\ref{alg:parameter_adjust}. By the definition of $\tilde{\Delta}$ at Line~\ref{enu:gap_parameter_adjust},
    \begin{equation}
        \tilde{\Delta}_{\text{new}}\leq \phi(\hat{y};\hat{\gamma}_{\text{new}})-\phi(y_c;\max\bcbra{\gamma_{c},\hat{\gamma}_{\text{new}}})
        +\max\bcbra{\iota_{c}\bar{\nu},\frac{2\bar{\nu}^{2}}{\hat{\mu}_{\text{new}}}}
        \leq \max\bcbra{\iota_{c}\bar{\nu},\frac{2\bar{\nu}^{2}}{\hat{\mu}_{\text{new}}}}\ ,
        \label{eq:upper_tilde_D}
    \end{equation}
    where the last inequality uses that $\phi(x;\gamma)$ is nondecreasing in $\gamma$.

    Next, let $\eta:=\max\bcbra{\frac{\hat{\gamma}_{\text{new}}}{\gamma_{c}}-1,0}$. Since $\iota_{c}{\nu}_{c}=\Delta$, we have
    \begin{equation}
        \iota_{c}\bar{\nu}
        =\iota_{c}\brbra{\max\bcbra{\hat{\gamma}_{\text{new}}-\gamma_{c},0}\cdot\frac{2\Delta}{\gamma_{c}\iota_{c}}+\nu_{c}}
        =\Delta(2\eta+1)\ .
        \label{eq:max_upper_1}
    \end{equation}
    Moreover, using $\iota_{c}=\sqrt{2\Delta/\mu_{c}}$ and $\nu_{c}=\sqrt{\mu_{c}\Delta/2}$,
    \begin{equation}
        \frac{2\bar{\nu}^{2}}{\hat{\mu}_{\text{new}}}
        =\frac{2}{\hat{\mu}_{\text{new}}}\brbra{\max\bcbra{\hat{\gamma}_{\text{new}}-\gamma_{c},0}\cdot\frac{2\Delta}{\gamma_{c}\iota_{c}}+\nu_{c}}^{2}
        =\Delta\brbra{1+4\eta+4\eta^{2}}\frac{\mu_{c}}{\hat{\mu}_{\text{new}}}\ .
        \label{eq:max_upper_2}
    \end{equation}
    Substituting~\eqref{eq:max_upper_1} and~\eqref{eq:max_upper_2} into~\eqref{eq:upper_tilde_D} gives the desired bound.
    \ifdefined\isarxiv
\end{proof}
\else
\qedsymbol\end{proof}
\fi

\begin{proof}[\underline{Proof of Theorem~\ref{thm:convergence_CW_certificate}}]
    We index every parameter pair visited by Algorithm~\ref{alg:rAPEXC}
    (including the initial pair) as
    \[
        (\hat{\mu}_s,\hat{\gamma}_s)
        =\brbra{\hat{\mu}_1/4^j,\hat{\gamma}_1\cdot4^{i-j}},
        \qquad 0\leq j\leq i\leq \guessTime .
    \]
    Throughout the proof, write $L_{\text{avg}}=L_{\text{avg}}(\vep)$
    (defined in~\eqref{eq:def_L_avg}). For a transition from one indexed pair to the
    next, let $\tilde{\Delta}_{(1)}^{(j,i)}$ and
    $\tilde{\Delta}_{(2)}^{(j,i)}$ denote the values of $\tilde{\Delta}_s$
    immediately before and after the corresponding call to $\adjustPara$.
    We call an indexed pair certified if the algorithm has recorded certificate
    parameters for that pair and the corresponding $\PWcer$ is guaranteed to
    exist. For each fixed level $i$, define
    \[
        J_i:=\bcbra{j\in\{0,\ldots,i\}:
            \brbra{\hat{\gamma}_{(2)}^{(j,i)},\hat{\mu}_{(2)}^{(j,i)}}
            \text{ is certified}},
    \]
    let $J_i(q)$ be the $q$-th smallest element of $J_i$, and set
    $\bar j_i:=\max J_i$ when $J_i\neq\varnothing$. In a worst-case oracle
    count, we may assume that ${\pawg}$ runs to completion for every
    explored pair $(j,i)$ without returning at Line~\ref{enu:false_direct} of Algorithm~\ref{alg:APWG}.

    \textbf{Sufficient condition for the target certificate.}
    By Proposition~\ref{prop:certificate_generation}, whenever Algorithm~\ref{alg:rAPEXC}
    reaches Line~\ref{enu:store_certificate}, the parameters stored in $\Cert$ admit a
    point set that is a $\PWcer$ for
    $\bar y^s$ in problem~\eqref{eq:initial_FP}. Since $\theta<1+\beta$, the
    case distinction in Line~\ref{enu:store_certificate} shows that the stored
    $\iota_c$ is at most
    $\sqrt{2(1+\beta)\tilde{\Delta}_s/\hat{\mu}_s}$ and the stored $\nu_c$ is at
    most $\sqrt{\hat{\mu}_s(1+\beta)\tilde{\Delta}_s/2}$.
    Hence this certificate is an
    $\brbra{\vep/\hat{\mu}_s,\vep,\hat{\gamma}_s}$-$\PWcer$ whenever
    \begin{equation}\label{eq:suff_eps}
        \tilde{\Delta}_s\leq\frac{\vep^2}{2\hat{\mu}_s(1+\beta)}.
    \end{equation}

    \textbf{Cost of one call to $\pagr$ or ${\pawg}$.}
    Consider a call $\mcal C$ made with indexed pair $(j,i)$, so
    $\hat{\gamma}_{\mcal C}=\hat{\gamma}_1 4^{i-j}$. By the definition of
    $L_{\text{avg}}$, its weighted mean satisfies
    $\bar L_{\hat{\gamma}_{\mcal C}}^{\mcal C}(N_{\mcal C})
        \leq 4^{i-j}L_{\text{avg}}$. Since the QG estimate in this call is
    $\hat{\mu}_1/4^j$,
    Propositions~\ref{prop:convergence_rapex_gap_reduction}
    and~\ref{prop:APWG_W_cer_gen_complexity} imply the single-call bound
    \begin{equation}\label{eq:single_call_cost}
        O(1)\,\bundleSize\max\bcbra{1,
            \sqrt{\frac{4^j\bar L_{\hat{\gamma}_{\mcal C}}^{\mcal C}(N_{\mcal C})}{\hat{\mu}_1}}}
        \leq O(1)\,\bundleSize\max\bcbra{1,
        \sqrt{\frac{4^iL_{\text{avg}}}{\hat{\mu}_1}}} .
    \end{equation}
    Denote the final upper bound in~\eqref{eq:single_call_cost} by $\kappa(j,i)$.
    Define
    \begin{equation}\label{eq:kappa_defn}
        \kappa_{+}:=\bundleSize\,2^{\guessTime}
        \sqrt{\frac{L_{\text{avg}}}{\hat{\mu}_{1}}}.
    \end{equation}
    Then $\kappa(j,i)\leq O(1)2^{i-\guessTime}\kappa_{+}$ for every explored
    pair $(j,i)$.

    We decompose the remaining oracle count into four terms, as illustrated in
    Figure~\ref{fig:function-constrained-complexity}.

    \textbf{Pairs that have not produced certificates $(S_1(\vep))$.}
    For stages that end with a parameter adjustment, summing
    \eqref{eq:single_call_cost} over the explored pairs gives
    \begin{equation}
        \label{eq:S1_vep}
        S_1(\vep)\leq \sum_{i=0}^{\guessTime}\sum_{j=0}^{i}\kappa(j,i)
        \leq O(1)\kappa_{+}\sum_{i=0}^{\guessTime}(i+1)2^{i-\guessTime}
        \leq O(1)\,\guessTime \kappa_{+}.
    \end{equation}

    \textbf{Initial phase before the first parameter update $(S_2(\vep))$.}
    Before the first call to $\adjustPara$, Algorithm~\ref{alg:rAPEXC} uses
    $(\hat{\gamma}_1,\hat{\mu}_1)$ and
    $\tilde{\Delta}_1=2\norm{\phi^{\prime}(\bar y^0;\hat{\gamma}_1)}^2/\hat{\mu}_1$.
    If the sufficient condition~\eqref{eq:suff_eps} has not yet been met, then
    \begin{equation}
        \label{eq:S2eps_rone}
        S_2(\vep)\leq O(1)\kappa_{+}
        \brbra{1+\log_{\tfrac{1}{\theta}}
            \frac{\norm{\phi^{\prime}(\bar y^0;\hat{\gamma}_1)}^2}{\vep^2}} .
    \end{equation}

    \textbf{Final fixed-parameter phase $(S_3(\vep))$.}
    Let $(j^*,i^*)$ be the last pair returned by $\adjustPara$ before
    \eqref{eq:suff_eps} holds. After this update, the parameter pair is fixed
    until the target certificate is obtained. The standard restart accounting for
    successful visits to Line~\ref{enu:store_certificate},
    together with the gradient-bound term in
    Line~\ref{enu:lower_bound_compute} of $\adjustPara$,
    gives
    \begin{equation}\label{eq:S3_upper}
        S_3(\vep)\leq O(1)\kappa_{+}
        \brbra{1+\log_{\tfrac{1}{\theta}}
            \frac{\max_{r=0,\ldots,\guessTime}
                \norm{\phi^{\prime}\brbra{\bar y^0;\hat{\gamma}_1\cdot4^r}}^2}{\vep^2}} .
    \end{equation}

    \textbf{Certified intermediate pairs $(S_4(\vep))$.}
    For a fixed level $i$, if $J_i=\varnothing$, then level $i$ contributes
    nothing to $S_4(\vep)$. Otherwise, define the pair after the last certified
    one by
    \[
        \brbra{j_i^+,i_i^+}:=
        \begin{cases}
            \brbra{\bar{j}_i+1,i}, & \bar{j}_i<i, \\
            \brbra{0,i+1},         & \bar{j}_i=i.
        \end{cases}
    \]
    For each fixed $i$ with $J_i\neq\varnothing$, the same transition accounting
    gives
    \[
        S_4^i(\vep)\leq O(1)\cdot 2^{i-\mcal T}\kappa_+\brbra{
        \log_{\tfrac{1}{\theta}}
        \frac{\tilde{\Delta}_{(2)}^{\brbra{J_i(1),i}}}
        {\tilde{\Delta}_{(1)}^{\brbra{j_i^+,i_i^+}}}+i}.
    \]
    Indeed, for consecutive certified indices at the same level, the parameters
    in Lemma~\ref{lem:upper_bound_transfer} satisfy
    \[
        \eta=\max\bcbra{4^{J_i(q)-J_i(q+1)}-1,0}=0.
    \]
    Thus Lemma~\ref{lem:upper_bound_transfer} gives
    \[
        \frac{\tilde{\Delta}_{(2)}^{(J_i(q+1),i)}}
        {\tilde{\Delta}_{(1)}^{(J_i(q+1),i)}}
        \leq4^{J_i(q+1)-J_i(q)} ,
    \]
    and these factors telescope to at most $4^i$. The remaining logarithm is
    bounded by the gradient-bound term in Line~\ref{enu:lower_bound_compute} and by the fact that
    \eqref{eq:suff_eps} has not yet been reached; hence
    \[
        \log_{\tfrac{1}{\theta}}
        \frac{\tilde{\Delta}_{(2)}^{\brbra{J_i(1),i}}}
        {\tilde{\Delta}_{(1)}^{\brbra{j_i^+,i_i^+}}}
        \leq O(1)\brbra{
            i+\log_{\tfrac{1}{\theta}}
            \frac{\max_{r=0,\ldots,\guessTime}
                \norm{\phi^{\prime}\brbra{\bar y^0;\hat{\gamma}_1\cdot4^r}}^2}{\vep^2}} .
    \]
    Summing over $i$ yields
    \begin{equation}\label{eq:final_upper_bound_S_4}
        S_4(\vep)\leq O(1)\kappa_{+}
        \brbra{\guessTime+\log_{\tfrac{1}{\theta}}
            \frac{\max_{r=0,\ldots,\guessTime}
                \norm{\phi^{\prime}\brbra{\bar y^0;\hat{\gamma}_1\cdot4^r}}^2}{\vep^2}} ,
    \end{equation}
    where the inequality holds by
    $\max_{j:(j,i)\in\mcal I(\vep)}\kappa(j,i)\leq
        O(1)2^{i-\guessTime}\kappa_{+}$ and the summability of
    $\sum_{i=0}^{\guessTime}2^{i-\guessTime}$ and
    $\sum_{i=0}^{\guessTime}i2^{i-\guessTime}$.

    The total number of first-order oracle calls before obtaining the target
    certificate is at most
    $S_1(\vep)+S_2(\vep)+S_3(\vep)+S_4(\vep)$. Finally,
    \begin{equation}\label{eq:norm_upper_bound}
        \max_{r=0,\ldots,\guessTime}
        \norm{\phi^{\prime}\brbra{\bar y^0;\hat{\gamma}_1\cdot4^r}}^2
        \leq 2\cdot4^{2\guessTime}\brbra{
            \norm{f^{\prime}\brbra{\bar y^0}}^2+\hat{\gamma}_1^2\norm{\mcal G^{\prime}\brbra{\bar y^0}}^2}.
    \end{equation}
    The factor $4^{2\guessTime}$ in~\eqref{eq:norm_upper_bound} contributes only
    an additional $O(1)\guessTime$ term inside the logarithm. Combining
    \eqref{eq:S1_vep}, \eqref{eq:S2eps_rone}, \eqref{eq:S3_upper},
    \eqref{eq:final_upper_bound_S_4}, and~\eqref{eq:kappa_defn} proves
    \eqref{eq:complexity_rapexc_lavg}.

    If $\bundleSize\geq k_f + k_{\mcal G}$, then
    Proposition~\ref{prop:upper_bound_Lip} gives, for any call $\mcal C$ using pair
    $(j,i)$,
    \[
        \bar L_{\hat{\gamma}_{\mcal C}}^{\mcal C}(N_{\mcal C})
        \leq O(1)(L_f+\hat{\gamma}_1 4^{i-j}L_{\mcal G}).
    \]
    Multiplying by the rescaling factor gives
    \[
        4^{j-i}\bar L_{\hat{\gamma}_{\mcal C}}^{\mcal C}(N_{\mcal C})
        \leq O(1)(4^{j-i}L_f+\hat{\gamma}_1L_{\mcal G}).
    \]
    Since $j\leq i$, we have $4^{j-i}\leq1$, and therefore
    $L_{\text{avg}}\leq O(1)(L_f+\hat{\gamma}_1L_{\mcal G})$ by
    \eqref{eq:def_L_avg}. Substituting
    this bound into~\eqref{eq:complexity_rapexc_lavg} gives
    \eqref{eq:complexity_rapexc}.
    \ifdefined\isarxiv
\end{proof}
\else
\qedsymbol\end{proof}
\fi

\begin{proof}[\underline{Proof of Theorem~\ref{thm:convergence_opt_convio}}]
    Since $\norm{\lambda^*}_1\geq1$, the initialization $\hat{\gamma}_{1}=2$ satisfies
    $\hat{\gamma}_{1}\leq2\norm{\lambda^*}_1$. Together with $\hat{\mu}_{1}\geq\mu^{*}$,
    Proposition~\ref{prop:flag_false} gives
    \[
        \guessTime
        =\left\lceil \log_{4}\frac{\hat{\mu}_{1}}{\mu^{*}}\right\rceil
        +\left\lceil \log_{4}\norm{\lambda^*}_1\right\rceil,
        \qquad
        4^{\guessTime}\le \frac{16\hat{\mu}_{1}\norm{\lambda^*}_1}{\mu^{*}},
    \]
    where the second bound uses $4^{\lceil a\rceil}\le 4\cdot 4^a$.

    Set
    $\delta:=\sqrt{\mu^{*}\vep/\brbra{16\norm{\lambda^*}_1(2\norm{\lambda^*}_1-1)}}$. We first show that any
    $\brbra{\delta/\hat{\mu}_{s},\delta,\hat{\gamma}_{s}}$-$\PWcer$ for
    $\bar{y}^{s}$ already implies~\eqref{eq:epsilon_condition}. By
    Proposition~\ref{prop:CW_certificate_transfer} with
    $\tilde{\gamma}=2\norm{\lambda^*}_1$, this certificate also gives a
    $\brbra{\delta/\hat{\mu}_{s},\bar{\nu},
            \max\bcbra{2\norm{\lambda^*}_1,\hat{\gamma}_{s}}}$-$\PWcer$ for $\bar y^s$, where
    \[
        \bar{\nu}:=\max\bcbra{2\norm{\lambda^*}_1-\hat{\gamma}_{s},0}
        \cdot\frac{2\delta}{\hat{\gamma}_{s}}+\delta .
    \]
    Applying Proposition~\ref{prop:QG_for_lower_bound} with $\mu=\mu^*$ gives
    \[
        \phi\brbra{\bar{y}^{s};\max\bcbra{2\norm{\lambda^*}_1,\hat{\gamma}_{s}}}-f^{*}
        \leq \max\Bcbra{\frac{\delta\bar{\nu}}{\hat{\mu}_{s}},\frac{2\bar{\nu}^{2}}{\mu^{*}}}.
    \]
    Since $\hat{\gamma}_{1}=2$ and every update multiplies $\hat{\gamma}_{s}$ by
    $4$, we have $\hat{\gamma}_{s}\ge2$. Hence, if
    $\hat{\gamma}_{s}\ge2\norm{\lambda^*}_1$, then $\bar{\nu}=\delta$; otherwise
    $2\le\hat{\gamma}_{s}<2\norm{\lambda^*}_1$, so
    $\bar{\nu}=\delta+2\brbra{2\norm{\lambda^*}_1-\hat{\gamma}_{s}}\delta/\hat{\gamma}_{s}
        =\brbra{4\norm{\lambda^*}_1/\hat{\gamma}_{s}-1}\delta\le(2\norm{\lambda^*}_1-1)\delta$. Thus
    \begin{equation}\label{eq:bar_nu_bound}
        \bar{\nu}\le(2\norm{\lambda^*}_1-1)\delta.
    \end{equation}
    Moreover, Proposition~\ref{prop:flag_false} and the bound on $4^{\guessTime}$ imply
    $\hat{\mu}_{s}\ge\hat{\mu}_{1}/4^{\guessTime}\ge\mu^{*}/(16\norm{\lambda^*}_1)$.
    Combining this with~\eqref{eq:bar_nu_bound}, we obtain
    \[
        \frac{\delta\bar{\nu}}{\hat{\mu}_{s}}
        \le \frac{16\norm{\lambda^*}_1(2\norm{\lambda^*}_1-1)\delta^{2}}{\mu^{*}}
        =\vep
        \quad\text{and}\quad
        \frac{2\bar{\nu}^{2}}{\mu^{*}}
        \le \frac{2(2\norm{\lambda^*}_1-1)^{2}\delta^{2}}{\mu^{*}}
        =\frac{2\norm{\lambda^*}_1-1}{8\norm{\lambda^*}_1}\vep
        \le \vep.
    \]
    Therefore,
    $f\brbra{\bar{y}^{s}}-f^{*}
        \le\phi\brbra{\bar{y}^{s};\max\bcbra{2\norm{\lambda^*}_1,\hat{\gamma}_{s}}}-f^{*}\le\vep$.
    Also, the definition of $\PWcer$ gives
    \[
        \bsbra{\mcal G\brbra{\bar{y}^{s}}}_{+}
        \le \frac{2\delta^{2}}{\hat{\mu}_{s}\hat{\gamma}_{s}}
        \le \frac{16\norm{\lambda^*}_1\delta^{2}}{\mu^{*}}
        =\frac{\vep}{2\norm{\lambda^*}_1-1}
        \le \vep.
    \]
    Thus every $\brbra{\delta/\hat{\mu}_{s},\delta,\hat{\gamma}_{s}}$-$\PWcer$ yields a point
    satisfying~\eqref{eq:epsilon_condition}.

    Now apply Theorem~\ref{thm:convergence_CW_certificate} with target accuracy
    $\delta$, $\hat{\gamma}_{1}=2$, and $L_{\text{avg}}=L_{\text{avg}}(\delta)$.
    If the initial pair is already valid, the same estimate follows from the
    no-adjustment case of the proof of Theorem~\ref{thm:convergence_CW_certificate}.
    After at most
    \[
        O(1)\,\bundleSize\,2^{\guessTime}\sqrt{\frac{L_{\text{avg}}}{\hat{\mu}_{1}}}
        \brbra{\guessTime+\bsbra{\log_{\tfrac{1}{\theta}}
                \frac{\norm{f^{\prime}\brbra{\bar{y}^{0}}}^{2}+\norm{\mcal G^{\prime}\brbra{\bar{y}^{0}}}^{2}}{\delta^{2}}}_+}
    \]
    first-order oracle calls, the algorithm constructs the corresponding certificate
    for some $\bar{y}^{s}$. Finally, the bound on $4^{\guessTime}$ implies
    \[
        2^{\guessTime}\sqrt{\frac{L_{\text{avg}}}{\hat{\mu}_{1}}}
        =\sqrt{\frac{4^{\guessTime}L_{\text{avg}}}{\hat{\mu}_{1}}}
        \le 4\sqrt{\frac{\norm{\lambda^*}_1L_{\text{avg}}}{\mu^{*}}}.
    \]
    Using also $\delta^{-2}=16\norm{\lambda^*}_1(2\norm{\lambda^*}_1-1)/(\mu^{*}\vep)$ and
    absorbing additional constants into $O(1)\mcal{T}$, we obtain the complexity
    bound~\eqref{eq:num_grad_oracle_lavg}. If
    $\bundleSize\geq k_f + k_{\mcal G}$, then the final part of
    Theorem~\ref{thm:convergence_CW_certificate} gives
    $L_{\text{avg}}\leq O(1)(L_f+2L_{\mcal G})$, which yields
    \eqref{eq:num_grad_oracle}.
    \ifdefined\isarxiv
\end{proof}
\else
\qedsymbol\end{proof}
\fi

\begin{proof}[\underline{Proof of Corollary~\ref{cor:convergence_opt_convio_small_mu}}]
    The proof follows that of Theorem~\ref{thm:convergence_opt_convio}. Since $\norm{\lambda^*}_1\geq1$ and $\hat{\gamma}_{1}=2$, Proposition~\ref{prop:flag_false} gives
    \[
        \guessTime=\left\lceil \log_{4}\norm{\lambda^*}_1\right\rceil,
        \qquad
        4^{\guessTime}\le 4\norm{\lambda^*}_1.
    \]
    Set $\delta:=\sqrt{\hat{\mu}_{1}\vep/\brbra{4\norm{\lambda^*}_1(2\norm{\lambda^*}_1-1)}}$. As in the proof of Theorem~\ref{thm:convergence_opt_convio}, any $\brbra{\delta/\hat{\mu}_{s},\delta,\hat{\gamma}_{s}}$-$\PWcer$ for $\bar{y}^{s}$ satisfies
    \[
        \phi\brbra{\bar{y}^{s};\max\bcbra{2\norm{\lambda^*}_1,\hat{\gamma}_{s}}}-f^{*}
        \le \max\Bcbra{\frac{\delta\bar{\nu}}{\hat{\mu}_{s}},\frac{2\bar{\nu}^{2}}{\mu^{*}}},
        \qquad
        \bar{\nu}\le(2\norm{\lambda^*}_1-1)\delta.
    \]
    The only change is that now
    \[
        \hat{\mu}_{s}\ge \frac{\hat{\mu}_{1}}{4^{\guessTime}}
        \ge \frac{\hat{\mu}_{1}}{4\norm{\lambda^*}_1}.
    \]
    Hence
    \[
        \frac{\delta\bar{\nu}}{\hat{\mu}_{s}}\le \frac{4\norm{\lambda^*}_1(2\norm{\lambda^*}_1-1)\delta^{2}}{\hat{\mu}_{1}}=\vep,
        \qquad
        \frac{2\bar{\nu}^{2}}{\mu^{*}}
        \le \frac{2(2\norm{\lambda^*}_1-1)^{2}\delta^{2}}{\mu^{*}}
        =\frac{(2\norm{\lambda^*}_1-1)\hat{\mu}_{1}}{2\norm{\lambda^*}_1\mu^{*}}\vep
        \le \vep,
    \]
    because $\hat{\mu}_{1}<\mu^{*}$. Therefore
    $f\brbra{\bar{y}^{s}}-f^{*}\le\vep$. Also,
    \[
        \bsbra{\mcal G\brbra{\bar{y}^{s}}}_{+}
        \le \frac{2\delta^{2}}{\hat{\mu}_{s}\hat{\gamma}_{s}}
        \le \frac{4\norm{\lambda^*}_1\delta^{2}}{\hat{\mu}_{1}}
        =\frac{\vep}{2\norm{\lambda^*}_1-1}
        \le \vep.
    \]
    Thus every $\brbra{\delta/\hat{\mu}_{s},\delta,\hat{\gamma}_{s}}$-$\PWcer$ yields a point satisfying the desired bounds.

    Applying Theorem~\ref{thm:convergence_CW_certificate} with target accuracy $\delta$ and $\hat{\gamma}_{1}=2$ shows that the algorithm obtains such a certificate after at most
    \[
        O(1)\,\bundleSize\,2^{\guessTime}\sqrt{\frac{L_f+2L_{\mcal G}}{\hat{\mu}_{1}}}
        \brbra{\guessTime+\bsbra{\log_{\tfrac{1}{\theta}}
                \frac{\norm{f^{\prime}\brbra{\bar{y}^{0}}}^{2}+\norm{\mcal G^{\prime}\brbra{\bar{y}^{0}}}^{2}}{\delta^{2}}}_+}
    \]
    first-order oracle calls. Using
    $2^{\guessTime}\le 2\sqrt{\norm{\lambda^*}_1}$ and
    $\delta^{-2}=4\norm{\lambda^*}_1(2\norm{\lambda^*}_1-1)/(\hat{\mu}_{1}\vep)$, and absorbing
    absolute constants into $O(1)$, we obtain~\eqref{eq:num_grad_oracle_small_mu}.
\end{proof}

\section{Numerical Study\label{sec:numerical_study}}
In this section, we evaluate our algorithms on three classes of function-constrained problems: convex QCQPs (Section~\ref{subsec:qcqp}), binary Neyman-Pearson classification (Section~\ref{subsec:binary_np}), and fairness-aware classification (Section~\ref{subsec:fairness}). Projection QP subproblems are solved with MOSEK~\cite{mosek}, with Clarabel~\cite{Clarabel_2024} used as a backup. 
Unless otherwise stated, external reference values are computed with MOSEK for Convex QCQP and with Ipopt~\cite{wachter2006implementation} for the other problems.
All experiments were conducted on a {Mac mini M2 Pro with 32 GB} of
RAM.

Before presenting the experiments, we introduce the baseline used for comparison and basic implementation details for Restarted $\papex$.

\textbf{RLS description.} To the best of our knowledge, Restarting Level Set method (RLS)~\cite{lin2025adaptiveparameterfreeprojectionfreerestarting} is the closest existing parameter-free method for function-constrained optimization that exploits an error-bound condition to obtain a better convergence rate. 
RLS uses two restart parameters, $\alpha$ and $B$. In all experiments, we set
$\alpha=0.5$ and $B=0.9$\footnote{Here, $B$ denotes the RLS restart parameter;
elsewhere, it denotes the bundle size}. Since RLS requires a computable strictly
feasible point, we obtain a Slater point $\tilde{x}$ by solving
$\min_{x\in X}\mcal G(x)$, using MOSEK for the convex QCQPs and Ipopt for the
classification problems.
RLS also requires a lower bound $r<f^*$ on the optimal value. For the convex
QCQPs, we set $r=\min_{x\in X}f(x)$; for the classification problems, we follow
the recommendation in~\cite{lin2025adaptiveparameterfreeprojectionfreerestarting}
and set $r=0$.
RLS is parallel and requires specifying the number of threads. Given the target tolerance $\vep$, the number of threads is recommended to be  $\tilde{K}+1$, where 
\begin{equation}\label{eq:K_tilde}
    \tilde{K}
=\left\lceil
\frac{r-\brbra{f(\tilde{x})-\mcal G(\tilde{x})}}{\alpha \mcal G(\tilde{x})}
\max\bcbra{
\log\brbra{\frac{f(\tilde{x})-\mcal G(\tilde{x})-r}{\alpha\vep}},
1}
\right\rceil,
\qquad r<f^* .
\end{equation}
For smooth problems, we solve the level-set subproblem using the accelerated prox-linear method with backtracking line search~\cite{drusvyatskiy2019efficiency}; for nonsmooth problems, we use the subgradient method. These choices follow the implementation in~\cite{lin2025adaptiveparameterfreeprojectionfreerestarting}.

\textbf{General setting for Penalty $\apex$.} Unless otherwise specified, we set $\theta = 0.75$, $\beta = 1.0$, $\bundleSize=50$, and $\hat{\gamma}_{1}=2$ in Algorithm~\ref{alg:rAPEXC}. 
{We implement $\onestepplus$ with two \uline{early-stopping} gates inside the $\onestepplus$. The first gate is certificate-driven: after each projection subproblem, the routine compares the objective value with the prescribed certificate radius. If the relevant radius threshold is crossed, the current call stops and returns the certificate flag without generating the remaining cuts. 
The second gate is exactly the complement of the slow-descent event~\eqref{eq:slow_event}. Once it passes, the outer iteration has obtained the descent estimate needed in the proof, so the implementation terminates the current $\onestepplus$ call and avoids the unused oracle evaluations. Thus, the numerical code keeps the same radius and descent tests as the analysis, but uses them as online stopping rules within each bundle.}

\subsection{Convex Quadratically Constrained Quadratic Programming\label{subsec:qcqp}}
We focus on the following convex Quadratically Constrained Quadratic Programming (QCQP) problem:
\begin{equation}\label{eq:convex_qcqp}
\min_{x \in \mbb R^n} f(x):=\frac{1}{2} x^{\top}Q_0 x + c_0^{\top}x \ \ \st \ g_i(x):=C\cdot \brbra{\frac{1}{2} x^{\top}Q_i x+c_i^{\top}x+d_i}\leq 0, \ i = 1, \ldots, m,
\end{equation}
where $C>0$ is a constraint-scaling constant, $Q_i\succeq 0$ for $i=0,\ldots,m$, and $c_i\in\mbb R^n$ ($i=0,\ldots,m$) and $d_i\in\mbb R$ ($i=1,\ldots,m$) are sampled independently from normal distributions.  

\paragraph{Performance of Restarted {\papex} on Convex QCQP. }
\ \ 

\textbf{Experiment setting for small-scale convex QCQP.}
We evaluate Restarted $\papex$ on small-scale convex QCQP instances while varying the condition number $\kappa^+ = \tfrac{L_f + 2\gamma^+ L_{\mcal G}}{\mu_f + 2\gamma^+ \mu_{\mcal G}}$ with $\gamma^+ = \max\bcbra{\norm{\lambda^*}_1, 1}$\footnote{Assumption~\ref{assu:dual_exist} requires $\norm{\lambda^*}_1 \geq 1$. Hence, we focus on penalty value $\gamma^+$ in the following  experiments}, the number of constraints $m$, and the initial QG estimate $\hat{\mu}_1$. Here, $\mu_f$, $\mu_{\mcal G}$, $L_f$, and $L_{\mcal G}$ denote the strong convexity and Lipschitz constants of the objective and constraint functions. In all runs, we set $n=100$ and $\bundleSize=50$. Each $Q_i$ has minimum eigenvalue $\mu=1$, and its maximum eigenvalue is chosen from $L\in\bcbra{5,10,100,1000}$. The eigenvalues of each $Q_i$ are evenly spaced in an arithmetic sequence from $\mu$ to $L$. For small-scale instances, each $Q_i$ is generated with eigenvalues evenly spaced from $\mu$ to $L$ and with distinct random eigenvectors. We set $C=0.01$ in~\eqref{eq:convex_qcqp} to rescale the constraints and control the scale of $\norm{\lambda^*}_1$. We terminate Restarted $\papex$ when it generates a certificate ($\Cert = \brbra{y_c, \iota_c, \nu_c, \gamma_c}$) with $\iota_c \nu_c < 10^{-6}$ (The parameter choice makes sure $\iota_c \nu_c = 2\nu_c^2 / \hat{\mu}_S$) and summarize the results in Table~\ref{tab:small-scale_problems}.

\begin{table}[htbp]
  \centering
  \caption{Comparison of Restarted $\papex$ on convex QCQP with different initial QG estimates $\hat{\mu}_1$. The table reports the numerically active constraint count at optimal solution $\abs{\mcal A^*}=\abs{\{i\in[m]: \abs{g_i(x^*)}\leq 10^{-6}\}}$, empirical condition number $\hat{\kappa} = \frac{L_f + 2\hat{\gamma}_S L_{\mcal G}}{\mu_f + 2\hat{\gamma}_S \mu_{\mcal G}}$, final estimates $\hat{\gamma}_S$ and $\hat{\mu}_S$, oracle calls, residual $\max\bcbra{f(x)-f^*,\bsbra{\mcal G(x)}_+}$, the early-stopping ratio = the number of early stops $\onestepplus$ divided by the total number of $\onestepplus$.\label{tab:small-scale_problems}}
  {
  \setlength{\tabcolsep}{2.8pt}
    \begin{tabular}{crrrrrrrrrrr}
    \toprule
    $m$     & \multicolumn{1}{c}{$\kappa^+$} & \multicolumn{1}{c}{$\norm{\lambda^*}_1$} & \multicolumn{1}{c}{$\abs{\mcal A^*}$} & \multicolumn{1}{c}{$\hat{\kappa}$} & \multicolumn{1}{c}{$\hat{\mu}_1$} & \multicolumn{1}{c}{$\hat{\gamma}_S$} & \multicolumn{1}{l}{$\hat{\mu}_S$} & \multicolumn{1}{c}{\begin{tabular}{@{}c@{}}First-order\\oracle calls\end{tabular}} & \multicolumn{1}{c}{$\max\left\{\begin{array}{@{}c@{}} f(x)-f^*,\\ \left[\mcal G(x)\right]_+ \end{array}\right\}$} & \multicolumn{1}{c}{\begin{tabular}{@{}c@{}}Early\\stopping\\ratio\end{tabular}} & \multicolumn{1}{c}{\begin{tabular}{@{}c@{}}Oracle calls\\per\\ $\onestepplus$\end{tabular}} \\
    \midrule
\multirow{16}{*}{$50$} & \multirow{4}{*}{$5$} & \multirow{4}{*}{$16.80$} & \multirow{4}{*}{$3$} & $6.60$ & $1$ & $32$ & $1$ & $323$ & $1.75\mathrm{E}{-08}$ & $100\%$ & $2.58$ \\
 &  &  &  & $0.66$ & $10$ & $32$ & $10$ & $347$ & $3.71\mathrm{E}{-09}$ & $100\%$ & $3.21$ \\
 &  &  &  & $1.06$ & $100$ & $32$ & $6.25$ & $360$ & $1.21\mathrm{E}{-08}$ & $100\%$ & $2.98$ \\
 &  &  &  & $0.42$ & $1000$ & $32$ & $15.625$ & $250$ & $3.61\mathrm{E}{-09}$ & $100\%$ & $3.13$ \\
\cmidrule(lr){2-12}
 & \multirow{4}{*}{$10$} & \multirow{4}{*}{$7.60$} & \multirow{4}{*}{$3$} & $10.80$ & $1$ & $8$ & $1$ & $402$ & $9.02\mathrm{E}{-08}$ & $100\%$ & $5.09$ \\
 &  &  &  & $1.08$ & $10$ & $8$ & $10$ & $438$ & $6.80\mathrm{E}{-08}$ & $100\%$ & $7.30$ \\
 &  &  &  & $0.43$ & $100$ & $8$ & $25$ & $401$ & $1.46\mathrm{E}{-07}$ & $100\%$ & $6.17$ \\
 &  &  &  & $0.69$ & $1000$ & $8$ & $15.625$ & $408$ & $9.98\mathrm{E}{-08}$ & $100\%$ & $5.51$ \\
\cmidrule(lr){2-12}
 & \multirow{4}{*}{$100$} & \multirow{4}{*}{$0.61$} & \multirow{4}{*}{$3$} & $102.00$ & $1$ & $2$ & $1$ & $999$ & $2.42\mathrm{E}{-07}$ & $100\%$ & $15.61$ \\
 &  &  &  & $10.20$ & $10$ & $2$ & $10$ & $943$ & $6.62\mathrm{E}{-08}$ & $100\%$ & $17.79$ \\
 &  &  &  & $1.02$ & $100$ & $2$ & $100$ & $885$ & $9.61\mathrm{E}{-08}$ & $100\%$ & $20.58$ \\
 &  &  &  & $1.63$ & $1000$ & $2$ & $62.5$ & $885$ & $1.48\mathrm{E}{-07}$ & $100\%$ & $17.02$ \\
\cmidrule(lr){2-12}
 & \multirow{4}{*}{$1000$} & \multirow{4}{*}{$0.06$} & \multirow{4}{*}{$5$} & $1020.00$ & $1$ & $2$ & $1$ & $1408$ & $3.46\mathrm{E}{-07}$ & $100\%$ & $19.56$ \\
 &  &  &  & $102.00$ & $10$ & $2$ & $10$ & $1463$ & $1.94\mathrm{E}{-07}$ & $100\%$ & $22.51$ \\
 &  &  &  & $10.20$ & $100$ & $2$ & $100$ & $1349$ & $2.30\mathrm{E}{-07}$ & $100\%$ & $24.53$ \\
 &  &  &  & $1.02$ & $1000$ & $2$ & $1000$ & $1281$ & $3.61\mathrm{E}{-07}$ & $100\%$ & $27.26$ \\
\midrule
\multirow{16}{*}{$200$} & \multirow{4}{*}{$5$} & \multirow{4}{*}{$17.50$} & \multirow{4}{*}{$4$} & $6.60$ & $1$ & $32$ & $1$ & $381$ & $1.56\mathrm{E}{-09}$ & $100\%$ & $2.95$ \\
 &  &  &  & $0.66$ & $10$ & $32$ & $10$ & $379$ & $1.37\mathrm{E}{-07}$ & $100\%$ & $3.68$ \\
 &  &  &  & $1.06$ & $100$ & $32$ & $6.25$ & $384$ & $4.20\mathrm{E}{-09}$ & $100\%$ & $3.25$ \\
 &  &  &  & $0.42$ & $1000$ & $32$ & $15.625$ & $224$ & $8.10\mathrm{E}{-08}$ & $100\%$ & $3.29$ \\
\cmidrule(lr){2-12}
 & \multirow{4}{*}{$10$} & \multirow{4}{*}{$8.15$} & \multirow{4}{*}{$5$} & $13.20$ & $1$ & $32$ & $1$ & $689$ & $1.16\mathrm{E}{-08}$ & $100\%$ & $6.82$ \\
 &  &  &  & $1.32$ & $10$ & $32$ & $10$ & $792$ & $7.14\mathrm{E}{-08}$ & $100\%$ & $9.00$ \\
 &  &  &  & $0.53$ & $100$ & $32$ & $25$ & $558$ & $2.75\mathrm{E}{-09}$ & $100\%$ & $8.33$ \\
 &  &  &  & $0.85$ & $1000$ & $32$ & $15.625$ & $711$ & $1.31\mathrm{E}{-07}$ & $100\%$ & $7.73$ \\
\cmidrule(lr){2-12}
 & \multirow{4}{*}{$100$} & \multirow{4}{*}{$0.72$} & \multirow{4}{*}{$4$} & $102.00$ & $1$ & $2$ & $1$ & $978$ & $1.49\mathrm{E}{-07}$ & $100\%$ & $15.52$ \\
 &  &  &  & $10.20$ & $10$ & $2$ & $10$ & $973$ & $1.29\mathrm{E}{-07}$ & $100\%$ & $18.02$ \\
 &  &  &  & $1.02$ & $100$ & $2$ & $100$ & $923$ & $1.15\mathrm{E}{-07}$ & $100\%$ & $20.07$ \\
 &  &  &  & $1.63$ & $1000$ & $2$ & $62.5$ & $889$ & $2.96\mathrm{E}{-07}$ & $100\%$ & $18.52$ \\
\cmidrule(lr){2-12}
 & \multirow{4}{*}{$1000$} & \multirow{4}{*}{$0.07$} & \multirow{4}{*}{$4$} & $1020.00$ & $1$ & $2$ & $1$ & $1629$ & $1.70\mathrm{E}{-07}$ & $100\%$ & $21.16$ \\
 &  &  &  & $102.00$ & $10$ & $2$ & $10$ & $1603$ & $2.90\mathrm{E}{-07}$ & $99\%$ & $23.93$ \\
 &  &  &  & $10.20$ & $100$ & $2$ & $100$ & $1547$ & $2.16\mathrm{E}{-07}$ & $98\%$ & $26.67$ \\
 &  &  &  & $1.02$ & $1000$ & $2$ & $1000$ & $1228$ & $3.25\mathrm{E}{-07}$ & $98\%$ & $27.91$ \\
    \bottomrule
    \end{tabular}%
  }
\end{table}%

\begin{figure}
\raggedright{}%
\begin{minipage}[t]{0.45\columnwidth}%
\begin{center}
\includegraphics[width=0.98\textwidth,height=0.8\textwidth]{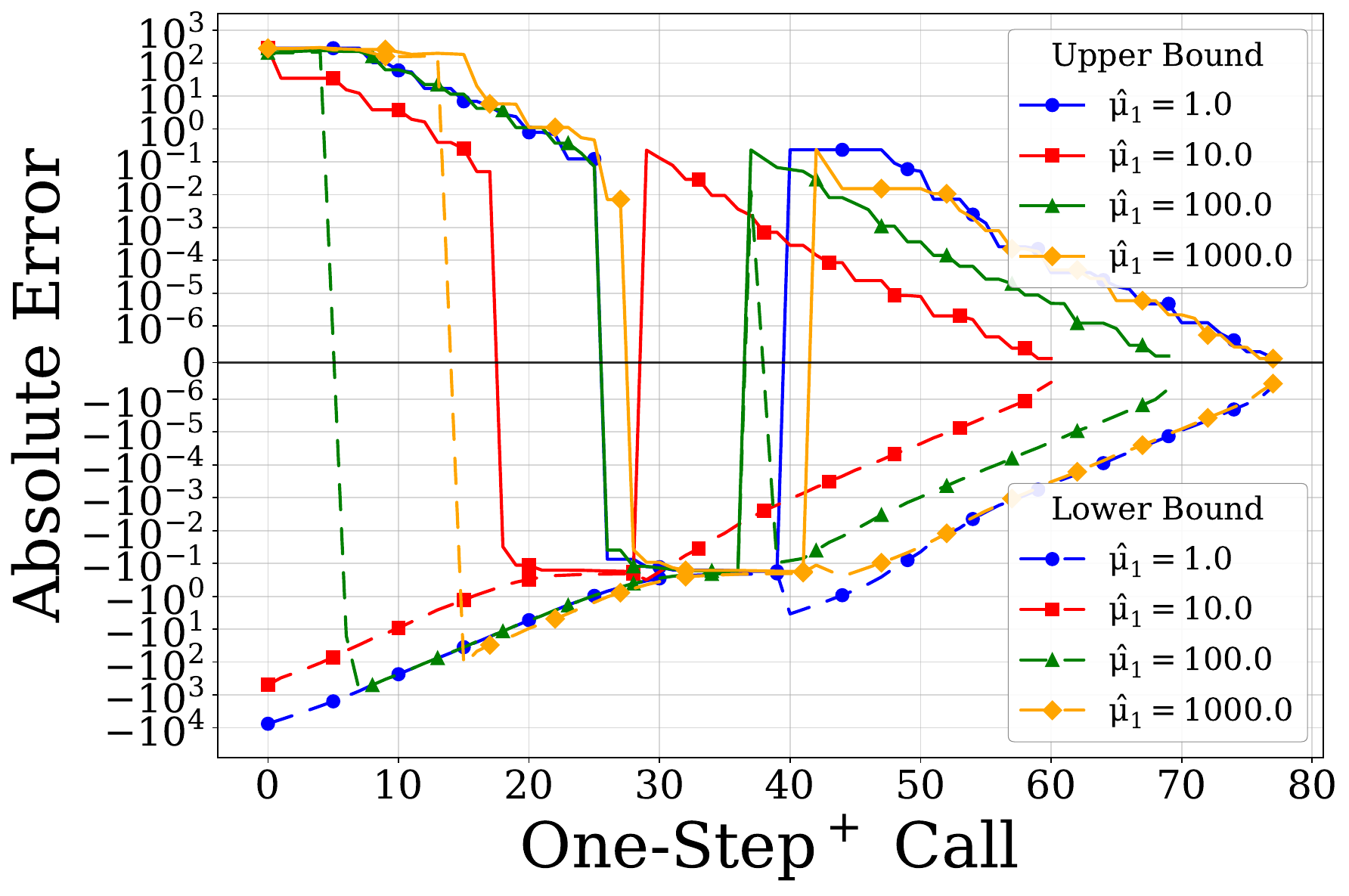}%
\caption{Absolute error of upper bound and lower bound of Restarted {\papex} vs. $\onestepplus$ calls on {convex QCQP} with $m=50,\kappa^+=10$ in Table~\ref{tab:small-scale_problems}.\label{fig:rapex_convergence_50_10}}
\par\end{center}%
\end{minipage}\hfill%
\begin{minipage}[t]{0.45\columnwidth}%
\begin{center}
\includegraphics[width=0.98\textwidth,height=0.8\textwidth]{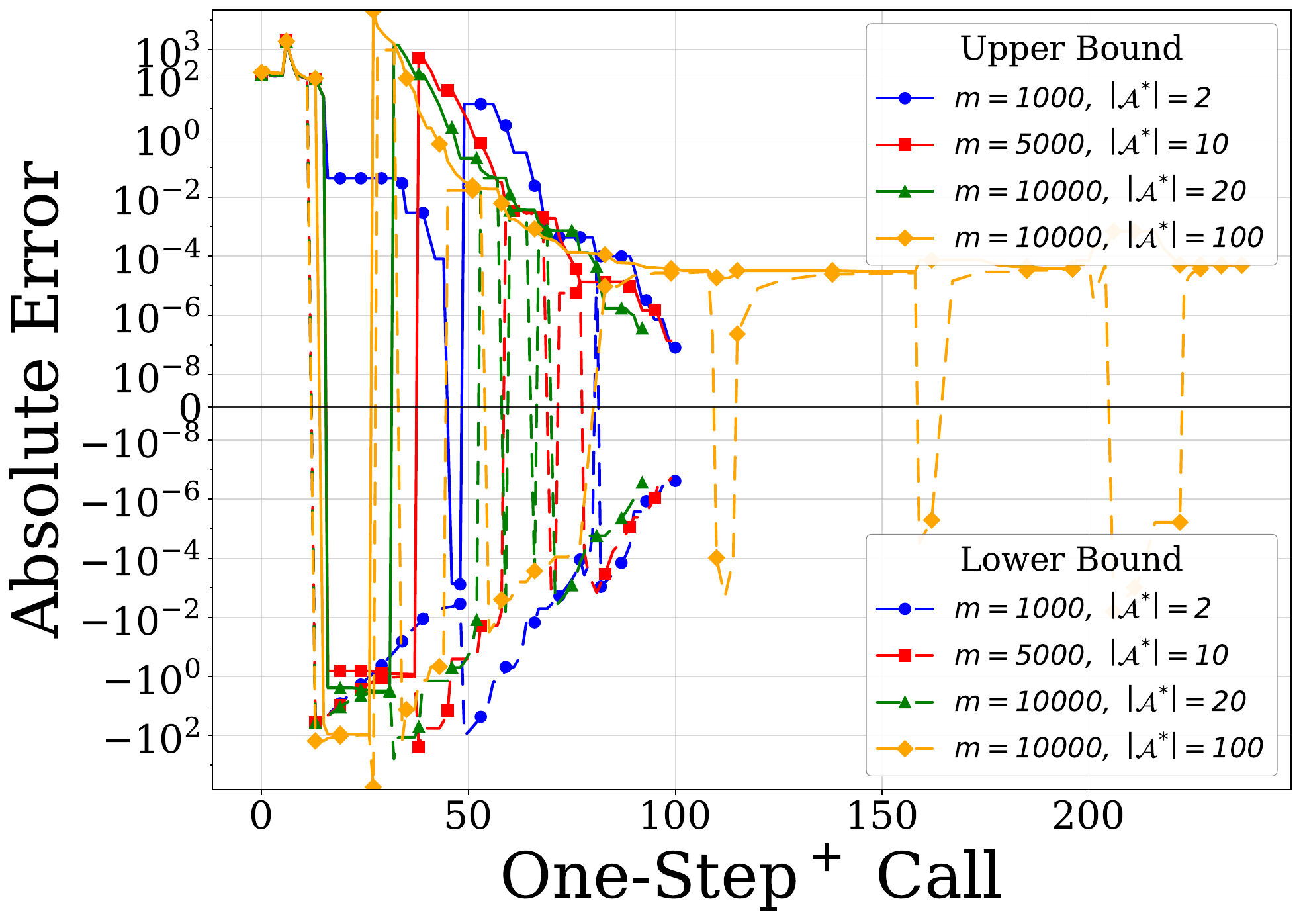}%
\caption{Absolute error of upper bound and lower bound of Restarted {\papex} vs. $\onestepplus$ calls on {convex QCQP} with different active constraint number in Table~\ref{tab:controlled_active_qcqp}. \label{fig:controlled_active_qcqp}}
\par\end{center}%
\end{minipage}
\end{figure}

\textbf{Key observations.}
Table~\ref{tab:small-scale_problems} leads to five observations.
First, the $\PWcer$ provides a reliable termination criterion: in all tested cases in Table~\ref{tab:small-scale_problems}, the residual $\max\bcbra{f(x)-f^*,[\mcal G(x)]_+}$ is below $10^{-6}$ whenever the certificate product satisfies $\iota_c \nu_c < 10^{-6}$.
Second, Restarted $\papex$ is robust to the initial QG estimate $\hat{\mu}_1$: for all tested values $\hat{\mu}_1\in\bcbra{1,10,100,1000}$, the residual $\max\bcbra{f(x)-f^*,[\mcal G(x)]_+}$ remains below $10^{-6}$. 
Third, the empirical condition-number estimate $\hat{\kappa}$ need not exceed $\kappa^+$. In most cases, $\hat{\kappa}<\kappa^+$; when $\hat{\kappa}>\kappa^+$, the overestimate is modest in the reported instances, with the largest ratio $\hat{\kappa}/\kappa^+$ about $1.62$. 
Fourth, Restarted $\papex$ performs well with only $\bundleSize=50$ cuts, including cases with $m=200$. This suggests that the method exploits the local piecewise-smooth structure encountered along the iterates, rather than requiring the bundle size to match the conservative worst-case bound $\max\bcbra{k_f,\sum_{i\in \activeconstrSet}k_{g_i}}$. The small values of $\abs{\mcal A^*}$ in the table are consistent with this explanation: $\abs{\mcal A^*}$ denotes the number of constraints that are active at the optimal solution to tolerance $10^{-6}$, i.e., $\abs{\mcal A^*}=\abs{\{i\in[m]: \abs{g_i(x^*)}\leq 10^{-6}\}}$. This implies that when the local region around the optimal solution involves fewer active pieces, Restarted $\papex$ can exploit this local structure.
Fifth, the practical early-stopping implementation is effective: the early-stopping ratio is $100\%$ in most cases, and the average number of oracle calls per $\onestep$ is far smaller than the full cut budget $B=50$.

We further plot the convergence of the upper bound for penalty function $\bar{\phi}^{s} = \brbra{\phi(\bar{y}^s;\hat{\gamma}_s)}$ and lower bound $\underline{\phi}^s = \brbra{\phi(\bar{y}^{s};\hat{\gamma}_s) - \tilde{\Delta}_s}$ in Figure~\ref{fig:rapex_convergence_50_10}, where the $y$-axis denotes the absolute error $\bar{\phi}^s - f^*$  or $\underline{\phi}^s - f^*$. 
Two observations can be made from the convergence plots.
First, Figure~\ref{fig:rapex_convergence_50_10} shows pronounced oscillations in the early iterations. During this phase, the method repeatedly invokes the $\adjustPara$ subroutine, and the computed upper and lower bounds need not lie on opposite sides of the optimal value. Once the parameter estimates stabilize, both bounds converge monotonically. 
Second, the upper and lower bounds converge at similar rates, which supports using the $\PWcer$ to monitor termination.

\begin{table}[htbp]
  \centering
  \caption{Restarted $\papex$ results on convex QCQP instances with controlled active-set sizes. The table reports the final estimates $\hat{\gamma}_S$ and $\hat{\mu}_S$, runtime, oracle calls, residual $\max\bcbra{f(x)-f^*,\bsbra{\mcal G(x)}_+}$, early-stopping ratio, and oracle calls per total step for each instance. All runs terminate after generating a certificate satisfying $\iota_c\nu_c \leq 10^{-6}$.\label{tab:controlled_active_qcqp}}
  {\small
  \setlength{\tabcolsep}{3.5pt}
    \begin{tabular}{crrcrrrrrrr}
    \toprule
    \multicolumn{1}{l}{$B$} & \multicolumn{1}{l}{$n$} & \multicolumn{1}{l}{$m$} & \multicolumn{1}{l}{$\abs{\mcal A^*}$}  & \multicolumn{1}{l}{$\hat{\gamma}_S$} & \multicolumn{1}{l}{$\hat{\mu}_S$} & \multicolumn{1}{l}{Time (sec)} & \multicolumn{1}{c}{\begin{tabular}{@{}c@{}}First-order \\oracle calls\end{tabular}} & \multicolumn{1}{c}{$\max\left\{\begin{array}{@{}c@{}} f(x)-f^*,\\ \left[\mcal G(x)\right]_+ \end{array}\right\}$} & \multicolumn{1}{c}{\begin{tabular}{@{}c@{}}Early\\stopping\\ratio\end{tabular}} & \multicolumn{1}{c}{\begin{tabular}{@{}c@{}}Oracle calls\\per\\ $\onestepplus$ \end{tabular}} \\
    \midrule
    \multirow{4}[2]{*}{$50$} & \multicolumn{1}{c}{\multirow{4}[2]{*}{$100$}} & $1000$  & \multicolumn{1}{r}{$2$}   & $128$   & $62.5$ & $8.4$  & $252$ & \multicolumn{1}{r}{$1.78\mathrm{E}{-08}$} & $100\%$ & $2.52$ \\
          &       & $5000$  & \multicolumn{1}{r}{$10$}  & $32$  & $3.91$ & $35.4$  & $593$ & \multicolumn{1}{r}{$5.43\mathrm{E}{-09}$} & $100\%$ & $5.99$ \\
          &       & $10000$ & \multicolumn{1}{r}{$20$}  & $32$  & $15.6$ & $91.2$ & $876$ & \multicolumn{1}{r}{$2.61\mathrm{E}{-08}$} & $100\%$ & $9.52$ \\
          &       & $10000$ & \multicolumn{1}{r}{$100$} & $32768$ & $1000$ & $241.0$ & $9080$ & \multicolumn{1}{r}{$5.10\mathrm{E}{-05}$} & $32.91\%$ & $38.31$ \\
    \bottomrule
    \end{tabular}%
  }
\end{table}%

\textbf{Experiment setting for convex QCQPs with a controllable number of active constraints.} 
As discussed in the third remark below Theorem~\ref{thm:ideal_papex}, the empirical performance appears to  depend more on the number of active constraints  encountered along the trajectory rather than on the total number of constraints. To examine this dependence empirically, we use the construction in Appendix~\ref{sec:controlled_active_qcqp} to generate QCQP instances with a prescribed active-set size at the optimal solution, reported as $\abs{\mcal A^*}$ in Table~\ref{tab:controlled_active_qcqp}. We run Restarted $\papex$ with $\hat{\mu}_1 = 1000.0$ and terminate it when it generates a certificate tuple $\Cert=(y_c,\iota_c,\nu_c,\gamma_c)$ satisfying $\iota_c\nu_c<10^{-6}$.

\textbf{Key observations.} Table~\ref{tab:controlled_active_qcqp} reports the controlled-active QCQP results, and Figure~\ref{fig:controlled_active_qcqp} plots the evolution of the corresponding upper bound for penalty function $\bar{\phi}^{s} = \brbra{\phi(\bar{y}^s;\hat{\gamma}_s)}$ and lower bound $\underline{\phi}^s = \brbra{\phi(\bar{y}^{s};\hat{\gamma}_s) - \tilde{\Delta}_s}$. Three observations are drawn from Table~\ref{tab:controlled_active_qcqp} and Figure~\ref{fig:controlled_active_qcqp}.  
First, the $\PWcer$ provides a reliable termination signal in the first three instances. 
Second, the method performs well when the prescribed active-set size is smaller than the bundle size. Moreover, in the two instances with $m=10000$, increasing $\abs{\mcal A^*}$ from $20$ to $100$ while keeping $\bundleSize=50$, the method still reaches the same certificate tolerance but requires substantially more oracle calls. 
Third, the certificate should not be interpreted as certifying the final parameter estimates themselves. The theory does not assert that a small product $\iota_c\nu_c$ implies the parameter estimates are valid, i.e., $\hat{\gamma}_s\geq2\gamma^+$ and $\hat{\mu}_s\leq\mu^*$. In the instance with $m=10000$ and $\abs{\mcal A^*}=100$, Figure~\ref{fig:controlled_active_qcqp} shows that the computed lower-bound curve can remain above $f^*$ even at termination, which implies $\hat{\gamma}_s<2\max\bcbra{\norm{\lambda^*}_1, 1}$ or $\hat{\mu}_s > \mu^*$.


\begin{table}[htbp]
  \centering
  \caption{Restarted $\papex$ results on large-scale convex QCQP instances. All runs use the stopping test $\iota_c \nu_c \leq \vep$. \label{tab:large_scale_qcqp}}
  {
  \setlength{\tabcolsep}{4.5pt}
    \begin{tabular}{crrrrrrrrrrr}
    \toprule
    \multicolumn{1}{l}{$B$} & \multicolumn{1}{l}{$n$} & \multicolumn{1}{l}{$m$} & \multicolumn{1}{l}{$\abs{\mcal{A}^*}$} & \multicolumn{1}{l}{$\vep$} & \multicolumn{1}{l}{$\hat{\gamma}_S$} & \multicolumn{1}{l}{$\hat{\mu}_S$} & \multicolumn{1}{l}{Time (sec)} & \multicolumn{1}{c}{\begin{tabular}{@{}c@{}}First-order\\oracle calls\end{tabular}} & \multicolumn{1}{c}{\begin{tabular}{@{}c@{}}One-step\\calls\end{tabular}} & \multicolumn{1}{c}{\begin{tabular}{@{}c@{}}Early\\stopping\\ratio\end{tabular}} & \multicolumn{1}{c}{\begin{tabular}{@{}c@{}}Oracle calls\\per\\ $\onestepplus$\end{tabular}} \\
    \midrule
    \multirow{4}[2]{*}{$5$} & 10000 & 50  & 4 & $10^{-6}$ & 2 & 3.91 & 103.74 & 544 & 117 & $37.61\%$ & 4.65 \\
          & 20000 & 50    & 5 & $10^{-6}$ & 2 & 3.91 & 337.56 & 700 & 146 & $17.81\%$ & 4.79 \\
          & 10000 & 100   & 1 & $10^{-3}$ & 32 & 1000 & 107.23 & 512 & 113 & $23.01\%$ & 4.53 \\
          & 20000 & 100   & 3 & $10^{-3}$ & 32 & 1000 & 229.08 & 466 & 110 & $29.09\%$ & 4.24 \\
    \bottomrule
    \end{tabular}%
  }
\end{table}%

\textbf{Experiment setting for large-scale convex QCQP.}
For large-scale instances, we reduce memory usage by using a common eigenvector basis for all matrices $Q_i$ and sampling the corresponding eigenvalues independently from $\mathrm{Unif}[1,10]$. We consider $n\in\{10000,20000\}$ and $m\in\{50,100\}$, set $C=1.0$ in~\eqref{eq:convex_qcqp}, and initialize Restarted $\papex$ with $\hat{\mu}_1=1000.0$ and $\bundleSize=5$. We terminate Restarted $\papex$ when it generates a certificate satisfying $\iota_c\nu_c<10^{-6}$ for $m=50$ and use the looser tolerance $\iota_c\nu_c<10^{-3}$ for $m=100$. 
The results are reported in Table~\ref{tab:large_scale_qcqp}. Because MOSEK does not provide reliable optimal reference values for these instances, we omit the residual $\max\bcbra{f(x)-f^*,[\mcal G(x)]_+}$. The quantity reported as $\abs{\mcal{A}^*}$ is the number of constraints whose values are within $10^{-8}$ of $\mcal G(x)$ at the best solution found; it is therefore an empirical active-set count rather than a certified active-set size at an optimal solution.

\textbf{Key finding.} Table~\ref{tab:large_scale_qcqp} suggests one observation for the large-scale QCQP instances. Restarted $\papex$ remains effective with $\bundleSize=5$, which is much smaller than the number of constraints. The runtime generally increases with the problem dimension and remains manageable for the largest tested instance, $n=20000$ and $m=100$. Hence, Restarted $\papex$ shows scalability on these large-scale convex QCQP instances.

\begin{table}[htbp]
  \centering
  \caption{Problem statistics, the RLS parameter $\tilde{K}$, and restarted $\papex$ certificate outputs. Here, $\tilde{K}$ is computed from~\eqref{eq:K_tilde} with $\vep=10^{-6}$ and $r=0$; $\hat{\gamma}_S$ and $\hat{\mu}_S$ denote the final penalty and QG-modulus estimates, respectively, and $\iota_c\nu_c$ denotes the product of the two parameters in the stored certificate tuple. \label{tab:problem_stats_and_RLS_threads}}%
    \begin{tabular}{cccrrrr}
    \toprule
    \multicolumn{3}{c}{Problem} & \multicolumn{1}{c}{\shortstack{Binary\\NPC}} & \multicolumn{1}{c}{\shortstack{Multi\\NPC}} & \multicolumn{1}{c}{\shortstack{Fairness\\Smooth}} & \multicolumn{1}{c}{\shortstack{Fairness\\Nonsmooth}} \\
    \cmidrule{4-7}
    \multicolumn{3}{c}{Dataset} & Arcene & Dry Bean & \multicolumn{2}{c}{COMPAS} \\
    \midrule
    \multicolumn{1}{c}{\multirow{3}[2]{*}{\shortstack{Data\\statistics}}} & \multicolumn{2}{c}{$d$} & 200   & 16   & \multicolumn{2}{c}{37}     \\
    & \multicolumn{2}{c}{$m$} & 1     & 7     &\multicolumn{2}{c}{2} \\
    & \multicolumn{2}{c}{$n$} & 100   & 700   & \multicolumn{2}{c}{4115} \\
    \midrule
    \multicolumn{1}{c}{\multirow{6}[3]{*}{\shortstack{Restarted\\Penalty\\APEX}}} & \multicolumn{1}{c}{\multirow{3}[2]{*}{\shortstack{$\hat{\mu}_1$\\$=1$}}} & \multicolumn{1}{l}{$\hat{\gamma}_S$} & 8     & 2     & 2     & 2 \\
          &       & \multicolumn{1}{l}{$\hat{\mu}_S$} & 0.25  & 1 & 1     & 1 \\
          &       & \multicolumn{1}{l}{$\iota_c\nu_c$} & 8.14E-06 & 1.26E-05 & 2.20E-06 & 4.18E-06 \\
\cmidrule{2-7}          & \multicolumn{1}{c}{\multirow{3}[1]{*}{\shortstack{$\hat{\mu}_1$\\$=10$}}} & \multicolumn{1}{l}{$\hat{\gamma}_S$} & 8     & 8     & 2     & 2 \\
          &       & \multicolumn{1}{l}{$\hat{\mu}_S$} & 0.15625 & 10   & 2.5   & 0.625 \\
          &       & \multicolumn{1}{l}{$\iota_c\nu_c$} & 3.99E-05 & 1.26E-05 & 2.12E-06 & 6.72E-06 \\
          \midrule
    RLS   & \multicolumn{2}{c}{$\tilde{K}$} & 341   & 61    & 407   & 309 \\
    \bottomrule
    \end{tabular}%
\end{table}%

\subsection{Neyman-Pearson classification\label{subsec:binary_np}}
We next consider Neyman-Pearson classification (NPC) problems~\cite{tong2016survey}, which minimize the loss on a target class while controlling the loss on the remaining classes.

\textbf{Binary NPC.}
For binary labels $y_i\in\bcbra{-1,1}$, we use the logistic loss and solve
\begin{equation}\label{eq:binary_np}
    \begin{aligned}
        \min_{w\in \mbb R^{d}}\quad
        & \frac{1}{n_1}\sum_{i=1}^{n}\onebf_{y_i=1}\log\brbra{1+\exp(-y_i x_i^{\top} w)}
        + \frac{\rho}{2}\norm{w}^2\\
        \st\quad
        & \frac{1}{n_{-1}}\sum_{i=1}^{n}\onebf_{y_i=-1}\log\brbra{1+\exp(-y_i x_i^{\top} w)}
        \leq \kappa,
    \end{aligned}
\end{equation}
where $n_1$ and $n_{-1}$ denote the numbers of samples in the positive and negative classes, respectively. In the binary experiments, we set $\kappa=0.5$ and $\rho=0.01$, and use the Arcene dataset~\cite{miscArcene167}.

\textbf{Multi-class NPC.}
For the multi-class case, we minimize the average cross-entropy loss subject to an upper bound on each class-wise loss:
\begin{equation}\label{eq:multi_np}
    \begin{aligned}
        \min_{w_1,\ldots,w_m \in \mbb R^{d}}\quad
        & -\frac{1}{n}\sum_{i=1}^{n}\sum_{j=1}^{m}\onebf_{y_i = j}\log(p_{ij}) + \frac{\rho}{2}\sum_{j=1}^{m}\norm{w_j}^2\\
        \st\quad
        & -\frac{1}{n_j}\sum_{i=1}^{n}\onebf_{y_i = j}\log(p_{ij})
        \leq \kappa_j,\qquad j=1,\ldots,m,
    \end{aligned}
\end{equation}
where $n_j = \sum_{i=1}^{n}\onebf_{y_i = j}$ and
$p_{ij}=\exp(x_i^{\top} w_j)/\sum_{\ell=1}^{m}\exp(x_i^{\top} w_\ell)$. In the multi-class experiments, we set $\rho = 0.01$, $\kappa_j=0.8$ for every $j=1,\ldots, m$ and use the Dry Bean dataset~\cite{Koklu2020MulticlassCO}.

We compare Restarted $\papex$ with RLS on the binary and multi-class NPC problems.  We stop each method once $\max\bcbra{f(x)-f^*,[\mcal G(x)]_+}\leq10^{-6}$ or the oracle-call budget reaches $500{,}000$. For Restarted $\papex$, we test the inital QG estimate $\hat{\mu}_1 \in \bcbra{1,10}$. Table~\ref{tab:problem_stats_and_RLS_threads} reports the problem sizes and the corresponding parameter settings and results.  The convergence curves are shown in Figures~\ref{fig:np_convergence_binary} and~\ref{fig:np_convergence_multi}. Since RLS is parallel, we plot its oracle counts on a single-thread-equivalent scale.

Table~\ref{tab:problem_stats_and_RLS_threads} and Figures~\ref{fig:np_convergence_binary}--\ref{fig:np_convergence_multi} lead to three observations. First, Figure~\ref{fig:np_convergence_binary} shows that Restarted $\papex$ requires fewer first-order oracle calls than RLS on the binary NPC instance. This gap is larger when the prescribed quantity $\tilde{K}$ for RLS is large (see Table~\ref{tab:problem_stats_and_RLS_threads}), which is consistent with the dependence on $\tilde{K}$~\cite{lin2025adaptiveparameterfreeprojectionfreerestarting}. Second, Restarted $\papex$ reaches $\max\bcbra{f(x)-f^*,[\mcal G(x)]_+}\leq10^{-6}$ on both NPC instances, and the final certificate products $\iota_c\nu_c$ are of the same order almostly, supporting the use of the $\PWcer$ as an effective termination criterion.

\begin{figure}
\raggedright{}%
\begin{minipage}[t]{0.45\columnwidth}%
\begin{center}
\includegraphics[width=0.98\textwidth,height=0.8\textwidth]{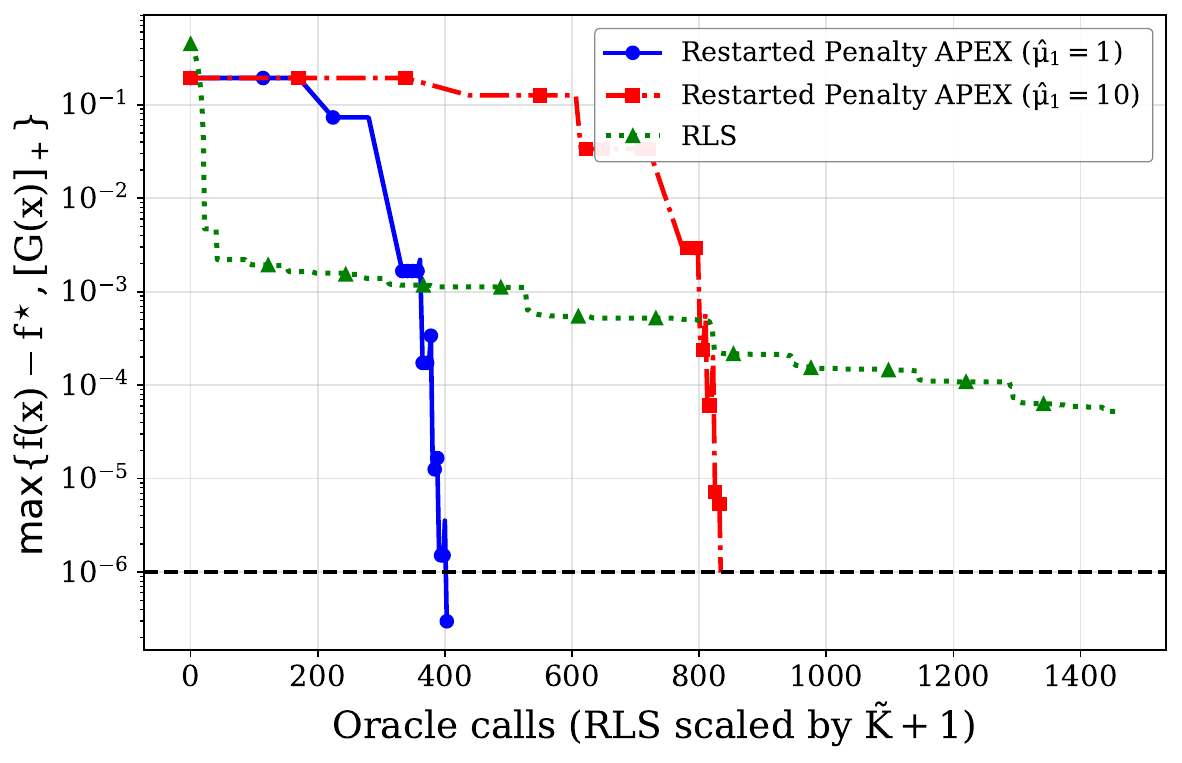}%
\caption{Restarted $\papex$ and RLS on binary NPC, with thread-normalized RLS oracle counts. \label{fig:np_convergence_binary}}
\par\end{center}%
\end{minipage}\hfill%
\begin{minipage}[t]{0.45\columnwidth}%
\begin{center}
\includegraphics[width=0.98\textwidth,height=0.8\textwidth]{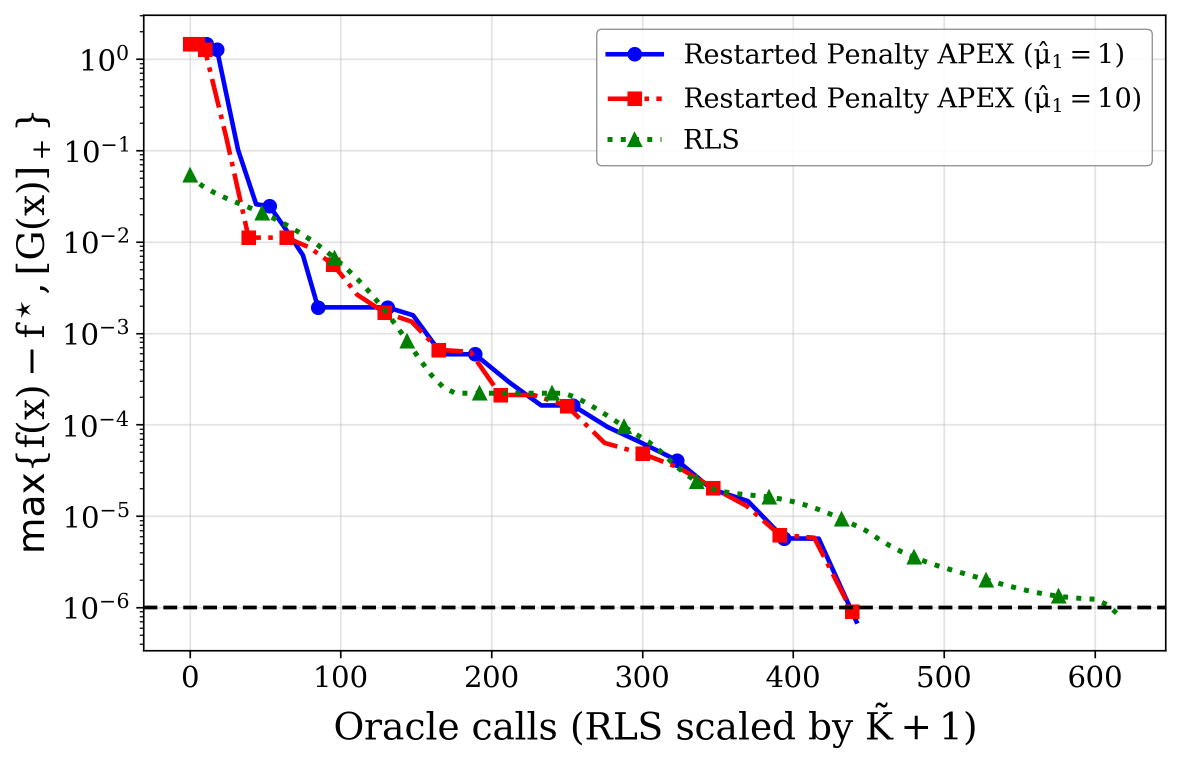}%
\caption{Restarted $\papex$ and RLS on multi-class NPC, with thread-normalized RLS oracle counts. \label{fig:np_convergence_multi}}
\par\end{center}%
\end{minipage}
\end{figure}

\begin{figure}
\raggedright{}%
\begin{minipage}[t]{0.45\columnwidth}%
\begin{center}
\includegraphics[width=0.98\textwidth,height=0.8\textwidth]{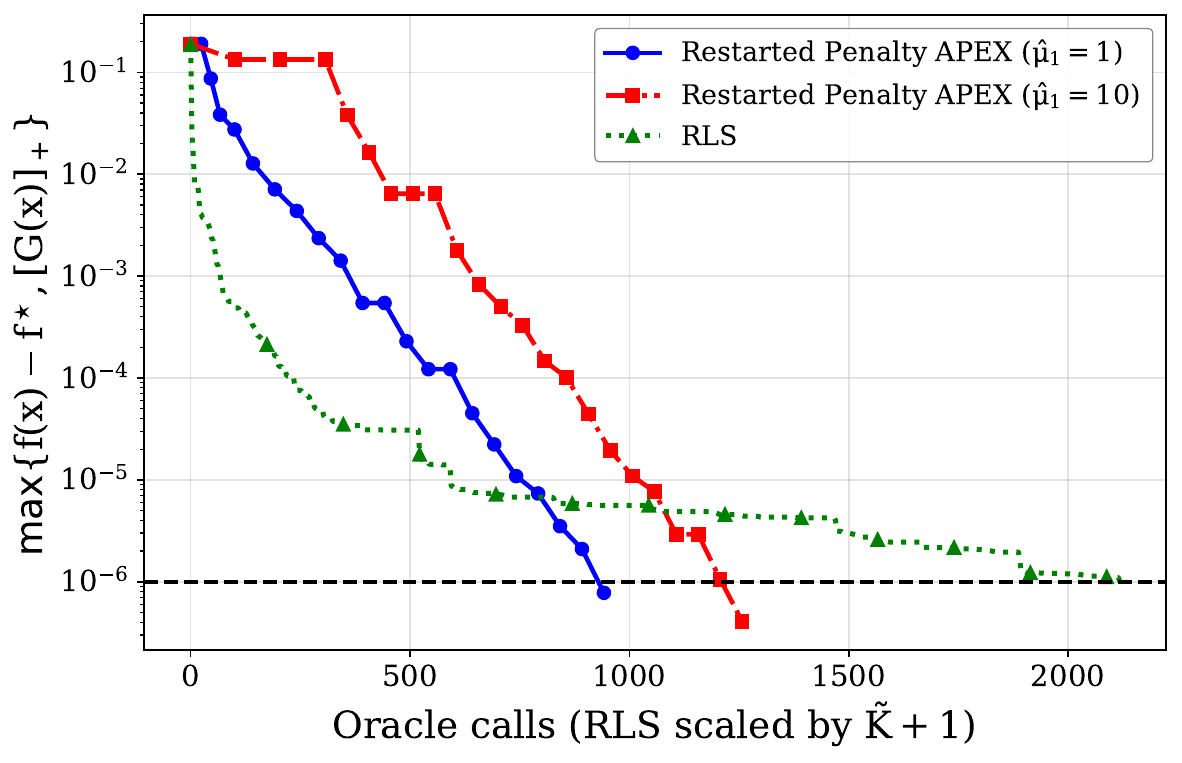}%
\caption{Restarted $\papex$ and RLS on the smooth convex fairness surrogate, with thread-normalized RLS oracle counts. \label{fig:fairness_convergence_smooth}}
\par\end{center}%
\end{minipage}\hfill%
\begin{minipage}[t]{0.45\columnwidth}%
\begin{center}
\includegraphics[width=0.98\textwidth,height=0.8\textwidth]{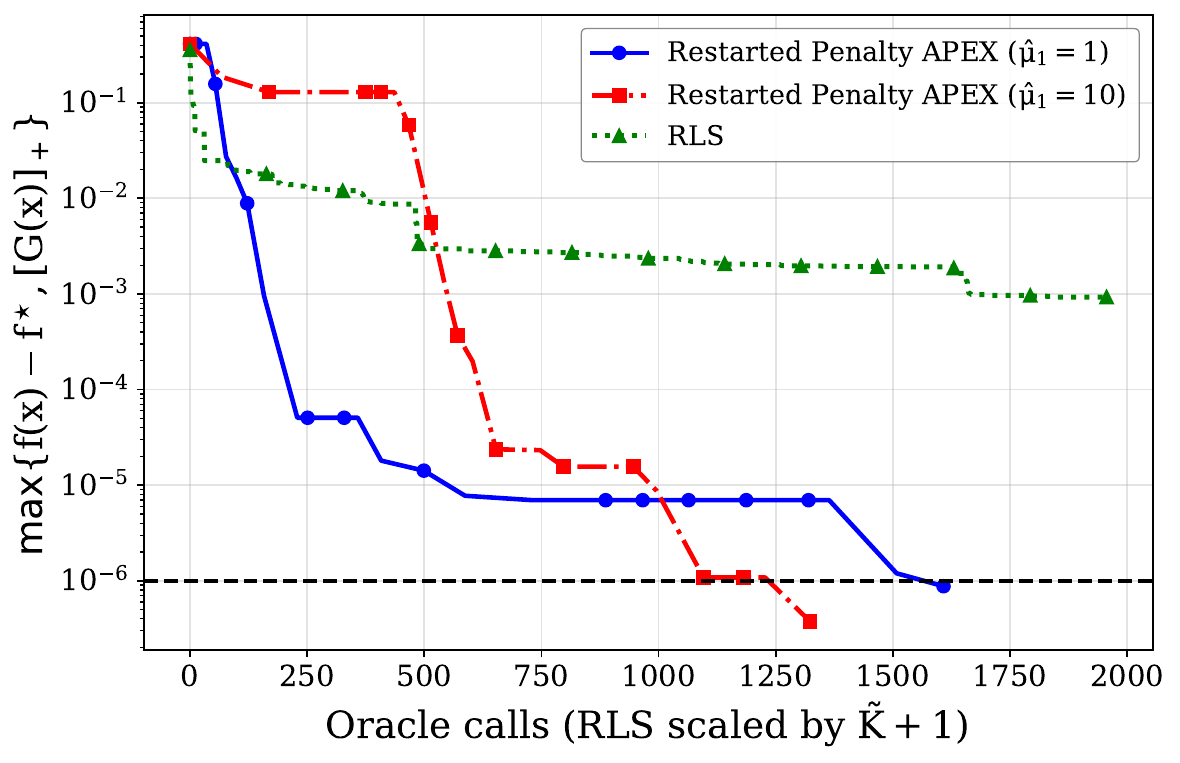}%
\caption{Restarted $\papex$ and RLS on the nonsmooth convex fairness surrogate, with thread-normalized RLS oracle counts. \label{fig:fairness_convergence_nonsmooth}}
\par\end{center}%
\end{minipage}
\end{figure}

\subsection{Classification with fairness constraints\label{subsec:fairness}}
Consider a classification dataset $\bcbra{x_i, y_i}_{i=1}^n$, where $x_i \in \mathbb{R}^d$ is the feature vector and $y_i \in \{-1, 1\}$ is the label. Fairness-aware classification can be written as~\cite{goh2016satisfying}
{\small\begin{equation}\label{eq:fairness_aware_classification}
    \begin{aligned}
        \min_{w\in \mbb R^d, \norm{w}^2 \leq r^2}\quad & \frac{1}{n}\sum_{i=1}^n \ell(y_i x_i^{\top} w)\\
        \st\quad &
        \frac{1}{\abs{S_1}}\sum_{i \in S_1}\sigma(x_i^\top w)
        \geq
        \frac{\kappa}{\abs{S_2}}\sum_{i \in S_2}\sigma(x_i^\top w),\\
        &
        \frac{1}{\abs{S_2}}\sum_{i \in S_2}\sigma(x_i^\top w)
        \geq
        \frac{\kappa}{\abs{S_1}}\sum_{i \in S_1}\sigma(x_i^\top w).
    \end{aligned}
\end{equation}}
We set $r=10$ and $\kappa=0.9$. The parameter $\kappa$ controls the allowable disparity between the two sensitive groups $S_1$ and $S_2$, of  sizes are $\abs{S_1}$ and $\abs{S_2}$. The loss $\ell$ is non-increasing and convex, and $\sigma(x_i^\top w)=\max\bcbra{0,\min\bcbra{1,0.5+x_i^\top w}}\in[0,1]$ is a clipped score for predicting class $+1$. The two fairness constraints require the predicted positive rates of the two groups to remain within a multiplicative factor $\kappa$. Using $\sigma(x_i^\top w) = 1 - \sigma(-x_i^\top w)$, the first constraint in~\eqref{eq:fairness_aware_classification} is equivalent to $\frac{\kappa}{\abs{S_2}}\sum_{x_i \in S_2}\sigma(x_i^\top w) + \frac{1}{\abs{S_1}}\sum_{x_i \in S_1}\sigma(-x_i^\top w)\leq 1$.

For the fairness experiments, we use the COMPAS dataset from ProPublica~\footnote{\url{https://github.com/propublica/compas-analysis}}. The dataset contains criminal history, jail and prison time, demographic variables, and other covariates. We use $n=4{,}115$ examples to define the objective and $\abs{S_1}+\abs{S_2}=3{,}099$ examples to define the constraints. The two sensitive groups have sizes $\abs{S_1}=2{,}462$ and $\abs{S_2}=637$.

Since these constraints are nonconvex, we follow~\cite{lin2020data,lin2025adaptiveparameterfreeprojectionfreerestarting} and consider two convex surrogates, one smooth and one nonsmooth.

\textbf{Smooth convex surrogate.\label{subsubsec:fairness_smooth}}
For the smooth surrogate, we replace $\sigma(x_i^\top w)$ with $\log_4(1+\exp(x_i^\top w))$ and use the logistic loss $\ell(y_i x_i^\top w)=\ln (1+\exp(-y_i x_i^\top w))$. The resulting objective and constraints are differentiable with Lipschitz continuous gradients.

\textbf{Nonsmooth convex surrogate.\label{subsubsec:fairness_nonsmooth}}
For the nonsmooth surrogate, we use the hinge loss $\ell(y_i x_i^\top w)=\max\bcbra{0,1-y_i x_i^\top w}$ and replace $\sigma(x_i^\top w)$ by $\max\bcbra{0,0.5+x_i^\top w}$.

Table~\ref{tab:problem_stats_and_RLS_threads} reports the problem sizes, parameter settings, and Restarted $\papex$ outputs, while Figures~\ref{fig:fairness_convergence_smooth} and~\ref{fig:fairness_convergence_nonsmooth} show the convergence curves. The results are consistent with the NPC experiments: Restarted $\papex$ uses fewer oracle calls than RLS and reaches $\max\bcbra{f(x)-f^*,[\mcal G(x)]_+}\leq 10^{-6}$ for both surrogate formulations. Furthermore, the $\PWcer$ in Table~\ref{tab:problem_stats_and_RLS_threads} satisfies $\iota_c\nu_c \approx 10^{-5}$, which supports the use of $\PWcer$ as an effective termination criterion. For the smooth surrogate, Figure~\ref{fig:fairness_convergence_smooth} shows linear convergence, consistent with the theory since the bundle size $\bundleSize=50$ exceeds the number of constraints $m=2$. In contrast, Figure~\ref{fig:fairness_convergence_nonsmooth} does not show linear convergence, as expected for the nonsmooth surrogate.

\renewcommand \thepart{}
\renewcommand \partname{}

\bibliographystyle{abbrvnat}
\bibliography{ref.bib}


\newpage
\appendix

\addcontentsline{toc}{section}{Appendix}
\part{Appendix} 

\section{Equivalent reformulation of the $\PWcer$ subproblem}
The projection subproblem~\eqref{eq:subproblems_plus}, used by $\onestepplus$ within the $\PWcer$-generation routine $\pawg$, admits an equivalent epigraph reformulation. Specifically, we introduce auxiliary variables $u$ and $v$ to upper-bound the objective and constraint cuts, respectively, with $v\geq 0$ accounting for the positive part:
\begin{equation}
\begin{aligned}
\min_{x,u,v}\quad
&\frac12\|x-\bar y\|^2\\
\text{s.t.}\quad
&x\in X\cap\bar X(t,0),\\
&u\ge \ell_f(x;\underline{x}^{t,j}),\qquad j=1,\ldots,i,\\
&v\ge \ell_{\mcal G}(x;\underline{x}^{t,j}),\qquad j=1,\ldots,i,\\
&v\ge 0,\\
&u+\gamma v\le \tilde{l} .
\end{aligned}
\end{equation}
All additional constraints are linear in $(x,u,v)$, and $\bar X(t,0)$ is a halfspace. Therefore, the subproblem preserves the same basic computational structure as a convex quadratic program over the simple set $X$.

\section{A convex QCQP construction with a prescribed number of active constraints}
\label{sec:controlled_active_qcqp}

This appendix describes a simple way to generate convex QCQP instances for which
both the optimizer and the active set are known by construction. We consider convex QCQP with $X=\mbb R^n$: 
\begin{equation}\label{eq:controlled_qcqp_model}
\begin{aligned}
\min_{x\in\mbb R^n}\quad
& f(x):=\frac12 x^\top Q_0 x+c_0^\top x\\
\st\quad
& g_i(x):=\frac12 x^\top Q_i x+c_i^\top x+d_i\leq0,
\qquad i=1,\ldots,m,
\end{aligned}
\end{equation}
where $Q_0 \succ 0$ and $Q_i\succeq0$ for $i=1,\ldots,m$. The aggregate constraint
violation is
\[
\mcal G(x):=\max_{1\leq i\leq m}g_i(x),
\qquad
\bsbra{\mcal G(x)}_{+}:=\max\bcbra{\mcal G(x),0}.
\]

Fix a target number of active constraints $m^*\in\{0,\ldots,m\}$ and choose an
active set $\mcal A^*\subseteq\{1,\ldots,m\}$ with $\abs{\mcal A^*}=m^*$; for example, one may take
$\mcal A^*=\{1,\ldots,m^*\}$, with $\mcal A^*=\emptyset$ when $m^*=0$. The goal is to
construct~\eqref{eq:controlled_qcqp_model}
so that a prescribed point $x^\star\in\mbb R^n$ is the optimizer and satisfies
\begin{equation}\label{eq:controlled_active_set}
g_i(x^\star)=0\quad i\in \mcal A^*,
\qquad
g_i(x^\star)<0\quad i\notin \mcal A^*.
\end{equation}
This construction is useful for testing how the method behaves as the number of
active constraints changes while keeping both the optimizer and a KKT certificate
analytically available.

The construction proceeds backward from the desired KKT system. We first draw
$x^\star\sim \mathcal N(0,I_n)$ and generate symmetric quadratic matrices with a
shared orthonormal eigenvector matrix $U$:
\[
Q_0=U\mathrm{diag}(q_{01},\ldots,q_{0n})U^{\top},\qquad
Q_i=U\mathrm{diag}(q_{i1},\ldots,q_{in})U^{\top},\quad i=1,\ldots,m,
\]
where $U^{\top}U=UU^{\top}=I_n$.
We draw the objective eigenvalues independently as
\[
q_{0j}\sim \mathrm{Unif}[\ell_0,u_0],
\]
where $0<\ell_0\leq u_0$, so $Q_0\succ0$. We draw the constraint eigenvalues
independently as
\[
q_{ij}\sim \mathrm{Unif}[\ell_g,u_g],
\]
where $0\leq\ell_g\leq u_g$, so $Q_i\succeq0$. In our experiments,
$\ell_0=u_0=1$ and $(\ell_g,u_g)=(0.05,0.20)$.

Next, we draw the dual multipliers for the prescribed active constraints as
\[
\lambda_i\sim \mathrm{Unif}[\lambda_{\min},\lambda_{\max}],
\qquad i\in\mcal A^*,
\]
where $0<\lambda_{\min}\leq\lambda_{\max}$. We set $\lambda_i=0$ for
$i\notin\mcal A^*$. In our experiments,
$(\lambda_{\min},\lambda_{\max})=(0.5,2.0)$. For each active constraint
$i\in\mcal A^*$, choose the desired active-gradient vector
$\nabla g_i(x^\star)$ and define
\begin{equation}\label{eq:active_constraint_coefficients}
c_i:=\nabla g_i(x^\star)-Q_i x^\star,
\qquad
d_i:=-\frac12(x^\star)^\top Q_i x^\star-c_i^\top x^\star .
\end{equation}
Then $g_i(x^\star)=0$ for all $i\in \mcal A^*$. For each inactive constraint
$i\notin \mcal A^*$, choose $c_i\in\mbb R^n$ and a slack $s_i>0$, and set
\begin{equation}\label{eq:inactive_constraint_coefficients}
d_i:=-\frac12(x^\star)^\top Q_i x^\star-c_i^\top x^\star-s_i .
\end{equation}
Then $g_i(x^\star)=-s_i<0$, so exactly the constraints in $\mcal A^*$ are active at
$x^\star$.

It remains to choose the linear term in the objective. Define
\begin{equation}\label{eq:controlled_qcqp_c0}
c_0:=-Q_0x^\star-\sum_{i\in \mcal A^*}\lambda_i\brbra{Q_i x^\star+c_i}.
\end{equation}
Using $\lambda_i=0$ for $i\notin \mcal A^*$, we obtain
\[
\nabla f(x^\star)+\sum_{i=1}^m\lambda_i\nabla g_i(x^\star)
=Q_0x^\star+c_0+\sum_{i=1}^m\lambda_i\brbra{Q_i x^\star+c_i}
=0.
\]
Together with~\eqref{eq:controlled_active_set}, $\lambda_i\geq0$, and
$\lambda_i g_i(x^\star)=0$, this verifies the KKT conditions at $x^\star$. Since
the problem is convex and $Q_0\succ0$, these KKT conditions are sufficient for
global optimality, and the strict convexity of $f$ on the convex feasible region
implies that $x^\star$ is the unique global minimizer. Thus the reference values
used in the experiments can be set to $x_{\rm ref}=x^\star$ and
$f_{\rm ref}^\star=f(x^\star)$.

\end{document}